\newcommand{\Spalt}          {\ensuremath{\mathcal{S}}}
\newcommand\compG{\widehat{\mathcal{R}}{}^G}
\newcommand\compR{\widehat{\mathcal{R}}{}^R}
\newcommand{\ie}          {\mbox{, \emph{i.\,e.},}}
\newcommand{\eqv}         {\ensuremath{\equiv}}
\newcommand{\eqvelem}     {\eqv^{(1)}}
\newcommand{\lb}          {\langle}
\newcommand{\rb}          {\rangle}
\newcommand{\Rp}          {\ensuremath{\mathcal{R}}}
\newcommand{\Bgb}         {\ensuremath{\mathcal{B}}}
\newcommand{\ew}          {\ensuremath{\varepsilon}}
\newcommand{\pres}        {presentation}
\newcommand{\press}       {presentations}
\newcommand{\sr}          {\ensuremath{(\Spalt;\Rp)}}
\newcommand{\monoid}         {\ensuremath{\lb\Spalt;\Rp\rb^+}}
\newcommand{\G}           {Gr\"obner}
\newcommand{\gen}         {generator}
\newcommand{\gens}        {{\gen}s}
\newcommand{\red}         {\ensuremath{\leadsto}}
\newcommand{\GR}          {type~2}%{[G$\subsetneq$R]}%{$R^\text{G$\subsetneq$R}$}
\newcommand{\RG}         {type~1}%{[R$\subsetneq$G${}^\infty$]}%{$R^\text{R$\subsetneq$G${}^\infty$}$}
\newcommand{\mixed}       {type~3}%{[$\perp$]}%{$R^\perp$}
\newcommand{\cex}         {counter-example}
\newcommand{\cexs}        {counter-examples}
\newcommand{\rev}        {\ensuremath{\curvearrowright}}
\newcommand{\revelem}    {\ensuremath{\rev^{(1)}}}
\newcommand{\redr}	{reversing}
\newtheorem{theo}{Theorem}[section]
\newtheorem{lemma}[theo]{Lemma}
\newtheorem{prop}[theo]{Proposition}
\newtheorem{algo}[theo]{Algorithm}
\newtheorem{algo-defi}[theo]{Algorithm-Definition}
\newtheorem{algo-prop}[theo]{Algorithm-Proposition}
\newtheorem{fact}[theo]{Fact}
\newtheorem{ques}[theo]{Question}
\newtheorem*{props}{Proposition}
\theoremstyle{definition}
\newtheorem{defi}[theo]{Definition}
\newtheorem{example}[theo]{Example}
\theoremstyle{remark}
\newtheorem{rem}[theo]{Remark}
\numberwithin{equation}{section}
\renewcommand{\fnum@figure}{\small{\figurename~\thefigure}}
\begin{document}

\title{Comparing Gr\"obner bases and word reversing}

\author{Marc Autord}
\address{}
\curraddr{}
\email{autord@math.unicaen.fr}

%\author{Author 2}
%\address{Address 2}
%\ead{email address 2}
%\ead[url]{URL 2}

\begin{abstract}
Gr\"obner bases, in their noncommutative version, and word reversing are methods
for solving the word problem of a presented monoid, and both rely on iteratively
completing the initial list of relations. Simple examples may suggest to
conjecture that both completion procedures are closely related. Here we
disprove this conjecture by exhibiting families of presentations for which they
radically differ.
\end{abstract}

\maketitle

\section*{Introduction}

When an algebraic structure is given by \gens\ and relations, typically a semigroup or a group, each element admits in
general several word representatives. The word problem is the question of deciding whether two words represent the same
element. It is known that, both in case of semigroups and groups, the word problem can be undecidable
\cite{iii71:_group_theor_decis_their_class}. However a number of methods have been developed that solve the word
problem in good cases. The aim of this paper is to compare two such methods,
namely the well-known \G\ basis method
\cite{kostrikin95:_combin_asymp_method_algeb} as adapted to semigroups, and the word \redr\ method of
\cite{dehornoy92:_deux}. 
Originally designed to answer questions in the context of free commutative algebras, the method of \G\ bases has
subsequently been adapted to noncommutative algebras and, \emph{via} the inclusion of a semigroup~$G$ in the
algebra~$K\langle S\rangle$, to general semigroups. In the latter case, the method consists in starting with a
semigroup presentation~$\sr$ and in running a certain completion procedure that adds new relations that are consequences
of the initial ones until one possibly obtains a so-called reduced \G\ basis \cite{mora86:_groeb, bokut03:_shirs} ---
see~\cite{kostrikin95:_combin_asymp_method_algeb} for a survey.

Word \redr\ is another combinatorial method \cite{dehornoy92:_deux} for investigating presented
semigroups. It also consists in iterating some simple syntactic transformation on words. In good cases, the method can be used to solve
the word problem. However, this only happens when the initial presentation satisfies a certain completeness condition. When
this is not the case, there exists a completion procedure that, as in the case of the \G\ completion, consists in adding new
relations to the initial presentation~\cite{dehornoy03:_compl}.

We thus have two {\it a priori} unrelated completion procedures. Now, it can be observed on some simple examples that the
two processes lead to adding the same relations. It is therefore natural to address the question of how the
two completions are connected. The aim of this paper is to show that, actually, there is no simple general relation:

\begin{props}
There exist finite semigroup presentations for which the \G\ completion and the \redr\ completion disagree.
\end{props}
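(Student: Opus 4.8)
The plan is to prove this existence statement constructively, by exhibiting explicit finite presentations on which the two completions provably produce different outputs. Before comparing anything, I would first fix a common frame in which both procedures return a set of \emph{added relations}: for the \G\ side I fix a monomial (say degree-lexicographic) ordering on words over~$\Spalt$, translate each relation $u=v$ of~$\Rp$ into the binomial $u-v$ in $K\langle\Spalt\rangle$, and record the relations adjoined while resolving overlap and inclusion ambiguities (Buchberger/Bergman completion); for the \redr\ side I fix a reversing direction and record the relations forced by the failing cube conditions in the \redr\ completion. With this dictionary, ``disagree'' means that the two resulting relation sets differ, and I would aim for the strongest form of disagreement: one completion terminating with a finite system while the other adjoins infinitely many new relations.

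Next I would design the presentation so that its relations overlap in a controlled way. The key structural observation is that \G\ reduction is governed entirely by leading monomials under the chosen order, whereas \redr\ inspects factorisations of the form $s^{-1}t$ and pushes letters across the relations; these two mechanisms react quite differently to the same overlaps. Concretely I would take a small alphabet and a handful of relations whose left-hand sides share prefixes or suffixes, and then parametrise by an integer~$n$ (for instance by iterating a single relation, or by lengthening one of the words) to obtain a family $(\Spalt_n;\Rp_n)$. The design goal is that the ambiguities seen by the \G\ procedure should close up after finitely many steps, producing a bounded, explicitly listable \G\ basis, while the cube conditions examined by \redr\ keep generating genuinely new relations as $n$ grows --- or the converse asymmetry, whichever the construction delivers.

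The computational core of the proof is then to run both completions on $(\Spalt_n;\Rp_n)$ and exhibit their outputs. For the \G\ completion I would enumerate the overlap ambiguities of the leading words, compute the normal form of each corresponding S-polynomial, and show by induction on the reduction steps that only finitely many new binomials survive, giving an explicit reduced basis that stabilises beyond some threshold in~$n$. For the \redr\ completion I would carry out the reversing of the relevant words, locate the first place where reversing fails to be complete, and then describe inductively the infinite family of relations that the completion is forced to add. Comparing the two lists directly yields the claimed disagreement.

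The main obstacle I anticipate is twofold. First, the comparison must be \emph{robust}: a disagreement that is merely an artefact of the chosen monomial order or of the chosen \redr\ orientation would be unconvincing, so I would either argue that the qualitative gap (finite versus infinite) persists for every admissible order and orientation, or pin down a canonical choice and justify it. Second, and harder, is establishing the \emph{infinite} half of the disagreement: proving that one completion never terminates requires an inductive invariant describing \emph{all} the relations it ever adds and showing that none of them is redundant --- that is, a normal-form or unique-representative argument guaranteeing that each newly forced relation is genuinely irreducible with respect to those already present. Constructing and verifying that invariant, rather than the per-step calculations, is where the real work lies.
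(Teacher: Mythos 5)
Your proposal is a research plan rather than a proof: for an existence statement of this kind, the entire mathematical content is the explicit witness presentation together with the verification, and your text supplies neither. You correctly identify the shape of the argument (fix an admissible order, compare the two sets of added relations, aim for a finite-versus-infinite disagreement) and you correctly identify where the real difficulty lies (an inductive invariant certifying that one completion adds infinitely many irredundant relations), but you then defer exactly these two items --- the concrete presentation and the invariant --- to future work. As written, nothing is proved. Moreover, the robustness worry you raise is a self-imposed burden the statement does not carry: the Gr\"obner completion is only defined relative to a chosen admissible order, so a single presentation with a single fixed order suffices; the paper simply fixes a deglex order in each example. Your parametrised family $(\Spalt_n;\Rp_n)$ also conflates two different roles of $n$ (indexing family members versus indexing completion stages); in the paper the infinitude lives in the completion output of one fixed finite presentation, which is what the statement requires.

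For comparison, the paper's witnesses are strikingly small, and seeing them shows how much your plan leaves open. For disagreement in one direction, $(a,b;\,ba=b)$ is Gr\"obner-complete on the nose (the leading word $ba$ admits no self-overlap, so no compositions arise), while the reversing completion is forced to add the infinite family $ba^n=b$, proved by an induction showing $(ba^n)^{-1}b$ never reverses to $\ew$ even after all relations $ba^m=b$ with $m<n$ are present. For the other direction, the braid presentation $(a,b;\,bab=aba)$ is already R-complete (by the homogeneity criterion, since its sole relation has the form $a\ldots=b\ldots$), while its reduced Gr\"obner basis is infinite; the infinitude is established by precisely the kind of invariant you gesture at --- a case analysis using letter counts $\sharp_b$ and the notion of an \emph{isolated} word (one equivalent to no other) to show no single basis relation can reduce infinitely many of the words $ba^mba$. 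Until you produce a presentation and carry out such an argument, the proposal does not establish the proposition.
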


Actually, we shall prove a more precise statement---Proposition~\ref{prop:main} below---showing really independent
behaviours, namely examples where one completion is finite and the other is infinite, or where there is an inclusion or
no inclusion. 

Thus the paper is mainly composed of negative results and counter-examples. However, we think it is interesting to list
these many examples because neither the \G\ nor the \redr\ completion procedures have been much investigated so
far, and their global behaviour is not well understood. Also, we point out that most of the examples we investigate
below are not just artificial {\it ad hoc} constructions, but they involve well-known and interesting semigroups, in
particular the braid semigroups and the Heisenberg semigroup: so, in particular, our results give explicit \G\ bases for these cases.

The paper is organized as follows. In Section~\ref{sec:grobner_bases}, we recall the notion of a \G\ basis in the  context of
presented monoids. In Section~\ref{sec:word_reversing}, we similarly describe word \redr\ and its associated notion of
completion and observe that the latter coincides with the \G\ completion of Section~\ref{sec:grobner_bases} on
simple examples. Then, in Section~\ref{sec:complet_diverg}, we establish the main results by constructing explicit
counter-examples witnessing all possible types of divergence for the two completions. In Section~\ref{sec:cancellativity}, 
we quit the word problem and address another problem involving presented semigroups, namely recognizing
cancellativity, and we examine its possible connections with \G\ bases. 

\bigskip

\noindent {\bf Acknoledgments. } The author is greatly indebted to Patrick~Dehornoy for suggesting the problem and for his many helpful comments. 

\section{Gr\"obner bases in the framework of presented monoids}
\label{sec:grobner_bases}

Both in the commutative and noncommutative cases, \G\ bases have been originally designed to decide whether a 
polynomial belongs to a given ideal of some algebra $K[X_1,\ldots,X_n]$ or $K\lb X_1,\ldots,X_n\rb$.  It is however easy
to adapt the \G\ bases machinery so as to solve the word problem of presented semigroups. 

In this section, we briefly review the method, in the specific case of semigroups. In particular, we recall how, in that case,
the \G\ completion procedure can be entirely performed in the language of semigroups, and how \G\ bases can be used
to solve the word problem.

\subsection{Gr\"obner bases}
\label{s_sec:grobner_bases}

Following standard notation, if \Spalt\ is a nonempty set, $\Spalt^*$ denotes the free monoid generated by \Spalt\ie\  the set of all words on \Spalt\ together with concatenation. The empty word is denoted by $\ew$. In 
the sequel, \Rp\ is a set of pairs of
nonempty words on \Spalt. A pair \sr\ is called a \emph{semigroup \pres} and we call $\Spalt$ the set of \emph{\gens}
and $\Rp$ the set of \emph{relations}. In the sequel, it will be convenient---in particular for Section~\ref{sec:word_reversing}---to
work with monoids rather than semigroups, that is, to add a unit. We then define
$\lb\Spalt;\,
\Rp
\rb^+$ to be the monoid
$\Spalt^*/\eqv$, where
$\eqv$ is the smallest congruence on $\Spalt^*$ containing \Rp. We denote $\eqvelem$ the relation so defined: ``$w
\eqvelem w'$ holds if and only if going from $w$ to $w'$ can be done by applying one relation of \Rp''. 

For a field $K$, the free associative $K$-algebra (or simply free algebra) with set of \gens~\Spalt\ and unity is denoted by
$K\lb\Spalt\rb$. As a set it can be viewed as the set of all linear combinations of elements of $\Spalt^*$ with coefficients
in $K$. The free monoid~$\Spalt^*$ embeds in the free algebra $K\lb\Spalt\rb$, and, more generally, the monoid
$\lb\Spalt;\,\Rp\rb^+$ embeds in the factor algebra $K\lb\Spalt\rb/I$, where $I$ is the two-sided ideal generated by all
the elements $u-v$, with $(u,v)\in \Rp$.

For details about \G\ bases in the context of algebras we refer to \cite{kostrikin95:_combin_asymp_method_algeb},  of
which we follow the notation and the terminology. We fix a well-ordering $<$ on the set of words~$\Spalt^*$, that is,  any
two words are comparable and every nonempty subset has a smallest element. This enables us to perform inductive proofs
on the rank of words. Moreover, we assume that the ordering is compatible with the operation of the monoid: $f < g$
implies $ufv < ugv$ for all words $u$, $v$. Such an ordering is called \emph{admissible}. There always exists an
admissible ordering: for each linear ordering of \Spalt\ the associated \emph{deglex} ordering (or homogeneous
lexicographic ordering) satisfies all these conditions: the words are first ordered by their length, and, if the lengths are the
same, lexicographically. In the sequel, we shall often write $u=v$ instead of $(u,v)$ for relations of a presentation.
%, and, if not otherwise specified, we shall also assume $u>v$.

Adapting \G\ techniques to the context of monoids---or semigroups---is easy, and it is alluded to in~\cite{mora86:_groeb}, but it
seems not to have been written explicitly in literature, and, therefore, we include some details.  The next proposition is the
first step: equalities in a presented monoid \monoid\  correspond to equalities of monomials in the algebra
$K\lb\Spalt\rb/I$, where $I$ is an ideal determined by~$\Rp$.

\begin{prop}
\label{prop:eqv_appartenir-ideal_eqv-monoid}
Assume $(\Spalt;\, \Rp)$ is a semigroup \pres. Let $I$ be the two-sided ideal of the free algebra $K\lb \Spalt \rb$ generated by the elements $u-v$ with $(u,v)\in\Rp$. Then, for all words $w$, $w'$ on \Spalt, the following are equivalent:
\begin{tabbing}
  \qquad \= $(i)$  \quad \= $w\eqv w'$,\\
         \> $(ii)$       \> $w-w' \in I$.
\end{tabbing}
\end{prop}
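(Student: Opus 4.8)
The plan is to prove the two implications separately, the direction $(i)\Rightarrow(ii)$ being a routine telescoping argument and $(ii)\Rightarrow(i)$ requiring an auxiliary quotient algebra. For $(i)\Rightarrow(ii)$, I would unwind the definition of $\eqv$ as the smallest congruence containing $\Rp$: concretely, $w\eqv w'$ means there is a finite chain $w=w_0,w_1,\dots,w_n=w'$ in which each $w_i$ is obtained from $w_{i-1}$ by a single application of a relation, i.e.\ $w_{i-1}=a_iu_ib_i$ and $w_i=a_iv_ib_i$ for some words $a_i,b_i$ and some pair $(u_i,v_i)$ lying in $\Rp$ or in its reverse. For each such step, $w_{i-1}-w_i=a_i(u_i-v_i)b_i$ belongs to $I$, because $I$ is a two-sided ideal containing $u_i-v_i$. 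Summing the telescoping identity $w-w'=\sum_{i=1}^n(w_{i-1}-w_i)$ then places $w-w'$ in $I$.

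The substantial direction is $(ii)\Rightarrow(i)$, where one must extract combinatorial (congruence) information from an algebraic membership that \emph{a priori} involves cancellation among the $K$-coefficients. The plan is to introduce the monoid algebra $A=K[\Spalt^*/\eqv]$ of the quotient monoid, whose underlying $K$-vector space has the set of $\eqv$-classes as a basis. Since $\eqv$ is a congruence, $\Spalt^*/\eqv$ is genuinely a monoid, and the canonical projection $\Spalt^*\to\Spalt^*/\eqv$ extends uniquely to a $K$-algebra homomorphism $\pi\colon K\lb\Spalt\rb\to A$ sending each word to its class. By definition of $\eqv$, every generator $u-v$ of $I$ satisfies $\pi(u-v)=[u]-[v]=0$; as $\pi$ is an algebra homomorphism and $I$ is the two-sided ideal they generate, it follows that $I\subseteq\ker\pi$.

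To finish, I would apply $\pi$ to the hypothesis: if $w-w'\in I$, then $\pi(w-w')=0$, that is $[w]=[w']$ in $A$. Because distinct $\eqv$-classes form a basis of $A$ and are therefore linearly independent, $[w]=[w']$ forces $w\eqv w'$, which is the desired conclusion. The only point demanding care---and the step I expect to be the main obstacle---is verifying that $\pi$ is well defined as an algebra homomorphism and that its target really carries the $\eqv$-classes as a genuine basis; this is exactly where the congruence property of $\eqv$ (compatibility with concatenation on both sides) enters, and it is what guarantees that the algebraic relation $w-w'\in I$ cannot hold for non-congruent words.
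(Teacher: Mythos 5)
Your proof is correct, and for the hard direction it takes a genuinely different route from the paper. For $(i)\Rightarrow(ii)$ your telescoping sum is essentially the paper's argument (the paper phrases it as an induction on the length of the chain of $\eqvelem$-steps, but the computation is the same). For $(ii)\Rightarrow(i)$ the paper stays combinatorial: from a decomposition $w-w'=\sum_i t_i(v_i-v'_i)u_i$ it extracts a relation $v=v'$ of $\Rp$ with $v$ a subword of $w$, writes $w=tvu$, notes that then $tv'u-w'\in I$, and closes by a minimal-counterexample induction along the fixed well-ordering on $\Spalt^*$, obtaining $w=tvu\eqvelem tv'u\eqv w'$. You instead pass to the monoid algebra $K[\Spalt^*/\eqv]$ of the quotient monoid and use the universal property of $K\lb\Spalt\rb$: the projection $\pi$ kills each generator $u-v$ of $I$, so $I\subseteq\ker\pi$, and linear independence of the distinct $\eqv$-classes as a basis of the monoid algebra converts $w-w'\in I$ into $[w]=[w']$, i.e.\ $w\eqv w'$. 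Your route buys robustness: it sidesteps the delicate point in the paper's extraction step, namely that monomials may cancel among the summands $t_i(v_i-v'_i)u_i$, so arguing that some $v_i$ or $v'_i$ genuinely occurs as a subword of $w$ requires care that the paper leaves implicit; moreover your argument makes no use of the well-ordering and works over any coefficient ring with $1\neq 0$. What the paper's approach buys is that it remains inside the rewriting framework (subwords, replacement by a relation, descent along the admissible order) that drives the Gr\"obner machinery in the rest of the article, whereas your quotient-algebra proof, though cleaner in isolation, yields no such combinatorial byproduct. The one point you flag as the expected obstacle---well-definedness of $\pi$ and the basis property of the classes---is indeed exactly where the congruence property of $\eqv$ enters, and your treatment of it is adequate.
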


\begin{proof}
Suppose $w\eqv w'$. This means that there exists a finite sequence of words $w_0=w,\ldots,w_n=w'$ such that $w_i \eqvelem w_{i+1}$ holds for every $i<n$. We prove by induction on $n$ that $w-w'$ lies in $I$. Assume $n=1$; there exist words $t,u$ on \Spalt\ and a relation $v=v'$ of \Rp\ such that both $w=tvu$ and $w'=tv'u$ hold. By hypothesis, $(v,v')\in\Rp$ implies $v-v'\in I$ and hence $t(v-v')u\in I$, that is $w-w'\in I$. Assume $n>1$. Then we have 
\[
w' = w_0 \eqvelem w_1 \eqv w_n =w.
\]
By induction hypothesis we get $w_n-w_1\in I$ and $w_1-w_0\in I$, and, writing $w-w'=(w_n - w_1) + (w_1-w_0)$ 
shows that $w-w'$ lies in $I$.

Suppose $w-w'\in I$. First observe that there is a decomposition $w-w'=\sum_{i=1}^{n} t_{i} (v_{i}-v'_{i}) u_{i}$  for some
$t_i,u_i\in K\lb\Spalt\rb$ and $(v_i,v'_i)\in\Rp$; this decomposition implies that there is a relation $v=v'$ such that $v$
is a subword of $w$, say $w=tvu$, with $t,u\in\Spalt^\ast$. Therefore $w-w'\in I$ implies $w-w'-(tvu-tv'u)\in I$ and then
$tv'u-w'\in I$. Suppose $w>w'$ and that $w$ is the smallest element for which the conclusion does not hold. Thus we get
$tv'u\eqv w'$, hence $w=tvu\eqvelem tv'u \eqv w'$.
\end{proof}

With Proposition~\ref{prop:eqv_appartenir-ideal_eqv-monoid}, we established a connection between words being 
equivalent and their difference lying in a particular ideal. The next lemma shows that for such ideals, that is to say ideals
generated by differences of monomials, the elements of the reduced \G\ basis are again differences of monomials.

\begin{lemma}
\label{lemma:groebner_reste_monoid}
  Assume $I$ is the ideal of $K\lb\Spalt\rb$ generated by $u_1-v_1,\ldots,u_n-v_n$ with $u_i$, $v_i$ in $\Spalt^*$. Then all the elements added during the G-completion have the type $u-v$, with $u$ and $v$ in $\Spalt^*$.
\end{lemma}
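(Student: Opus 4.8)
The plan is to argue by induction on the successive steps of the G-completion, showing that at every stage each polynomial in the current basis is a \emph{binomial}, by which I mean a difference $u-v$ of two words of $\Spalt^*$. Since the initial generators $u_1-v_1,\ldots,u_n-v_n$ are binomials by hypothesis, it suffices to check that the two elementary operations making up the completion---forming an S-polynomial from an overlap of leading monomials, and reducing the result modulo the current basis---both carry binomials to binomials (or to zero, which is simply discarded).

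First I would treat the formation of S-polynomials. Let $f=u-v$ and $g=u'-v'$ be two binomials, normalized so that $u$ and $u'$ are their respective leading monomials. In the noncommutative setting an obstruction arises either when a suffix of $u$ coincides with a prefix of $u'$, say $u=as$ and $u'=sb$, or when one leading monomial is a factor of the other, say $u=au'b$. The whole point of the S-polynomial is that these leading monomials cancel. In the first case the S-polynomial is $fb-ag$, and since $ub=asb=au'$ one gets $fb-ag=av'-vb$, again a difference of two words. The factor case is identical: $f-agb=av'b-v$. Thus the S-polynomial of two binomials is itself a binomial.

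Next I would treat reduction. Suppose $p=w_1-w_2$ is a binomial and $g=u-v$ a binomial with leading monomial $u$. A reduction step applies when $u$ is a factor of one of the two monomials of $p$, say $w_1=aub$; one then subtracts $a\,g\,b$, which replaces the term $w_1=aub$ by the term $avb$ and leaves $w_2$ untouched, giving $avb-w_2$. The crucial bookkeeping point is that the two monomials of a binomial always carry opposite coefficients $+1$ and $-1$, and a reduction step replaces one monomial by another monomial \emph{with the same coefficient}. Hence no coefficient other than $\pm1$ can ever appear, and the outcome is again a difference of two words---unless $avb$ and $w_2$ happen to be the same word, in which case the step produces zero. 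Iterating, the full reduction of a binomial modulo a set of binomials is either zero or a binomial.

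Combining these two observations with the induction yields the claim: every element the G-completion adds is the nonzero normal form of an S-polynomial of previously obtained binomials, hence a difference $u-v$ with $u,v\in\Spalt^*$. I expect the only delicate point to be the sign bookkeeping in the reduction step: one must make sure that when the newly produced monomial $avb$ coincides with the surviving monomial $w_2$ the two terms cancel rather than reinforce. This is exactly guaranteed by their opposite signs, so that the only alternative to a genuine binomial is the zero polynomial, and no term with coefficient $\pm2$ (which would break the ``difference of two monomials'' shape) can arise.
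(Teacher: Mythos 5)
Your proof is correct and follows essentially the same route as the paper's: both argue that each elementary operation of the completion (reduction and composition/S-polynomial formation, with normalization trivially vacuous for coefficients $\pm1$) sends differences of monomials to differences of monomials or to zero, via the same explicit cancellation computations. Your explicit attention to the sign bookkeeping, and your folding of the factor-overlap case into the S-polynomial step (the paper treats it as a reduction), are only cosmetic differences.
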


\begin{proof}
  Three steps are involved in the algorithm, namely normalization, reduction and completion. 

Define $\mathcal{B}$ to be the set of the elements $u_i-v_i$, $1\leq i\leq n$. The normalization process consists in substituting every element of $\mathcal{B}$ with a proportional element with leading coefficient $1$. In our context, this step does not change any of the elements of $\mathcal{B}$.

In the sequel, if $p$ is a polynomial, $\hat{p}$ denotes the term of highest rank and $\check{p}$ denotes $p-\hat{p}$. Assume $t\in\mathcal{B}$. Suppose there is a $u$ in $\mathcal{B}$ such that $\hat{u}$ is a subword of $\hat{t}$; in other words, there are words $t_l$ and $t_r$ satisfying $\hat{t}=t_l\hat{u}t_r$. Then the reduction step consists in discarding $t$ from $\mathcal{B}$ and replacing it by $t-t_lut_r$. We check now that $t-t_lut_r$ has the prescribed type: 
\[
  t-t_lut_r  =  (\hat{t}-t_lut_r) -\check{t} +t_l\check{u}t_r = t_l\check{u}t_r-\check{t}.
\]

The third step is composition, which forces leading terms to collapse when appropriately combined. Given $u=xy-\check{u}$ and $v=yz-\check{v}$, the composition is $uz-xv$, in which the two leading terms $xy$ and $yz$ cancel. Now we have $uz-xv=x\check{v}-\check{u}z$, again a difference of two monomials.
\end{proof}

Thus, along the G-completion\ie\ the computation of the \G\ basis, the elements added to the basis are differences of monomials. By Proposition~\ref{prop:eqv_appartenir-ideal_eqv-monoid}, they correspond to equalities in the monoid $\lb\Spalt;\, \Rp\rb^+$ and therefore to equivalent words on \Spalt. This allows us to redefine the \G\ operations at the level of words directly:

\begin{defi}[reduction of a relation]
\label{def:reduction_relation}
Let \sr\ be a semigroup \pres. Let $(w,w')$ and $(v,v')$ be relations satisfying $v>v'$ and $w=v_lvv_d$, with $v_l,v_d\in\Spalt^\ast$. Then the relation $(v_lv'v_r,w')$ is said to be obtained by \emph{reduction} of $(w,w')$ by $(v,v')$. We say that the relation $(w,w')$ \emph{reduces to 0} with respect to the set $\Rp$, or simply, when the set of relations is clear from the context, that $(w,w')$ reduces to 0, if there is a sequence of relations $(w,w')=(w_0,w_0'),\ldots, (w_n,w'_n)=(u,u)$ of  $\eqv$ such that every $(w_{i+1},w_{i+1}')$ is a reduction of $(w_{i},w_{i}')$ by a relation $(u_i, u'_i)$ of \Rp.
\end{defi}

Note that a reduction of $w=w'$ by any relation yields a relation $z=z'$ that satisfies $z<w$ or $z'<w'$. Since $\Spalt^*$ is well-ordered, reducing iteratively a relation eventually stops; otherwise we would get an infinite decreasing sequence of words. When no reduction applies to a relation, it is said to be \emph{reduced}.

\begin{defi}[composition of relations]
\label{def:compo_relation}
  Let \sr\ be a semigroup \pres. Let $w=w'$ and $v=v'$ be relations in $\Rp$ such that $w$ and $v$ overlap\ie\  we have $w=xy$ and $v=yz$ with $y$ a nonempty word. The \emph{composition} of $w=w'$ and $v=v'$ with overlapping $y$ is the element $(xv', w'z)$ of $\Spalt^\ast\times\Spalt^\ast$.
\end{defi}

\begin{fact}
  The composition of two elements of $\eqv$ is again in $\eqv$.
\end{fact}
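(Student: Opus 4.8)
The plan is to use only that $\eqv$ is, by its very definition, a congruence on $\Spalt^*$, hence compatible with concatenation on both sides, and to exhibit a single word that simultaneously witnesses both halves of the claimed equivalence.

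First I would fix the notation of Definition~\ref{def:compo_relation}: we are given two elements $(w,w')$ and $(v,v')$ of $\eqv$ together with an overlap $w=xy$ and $v=yz$ (with $y$ nonempty), and the composition is the pair $(xv',w'z)$. So the goal is precisely to establish $xv'\eqv w'z$. The central observation is that the word $xyz$ admits two factorizations, namely $(xy)z=wz$ and $x(yz)=xv$, and these denote the \emph{same} word of $\Spalt^*$ because concatenation is associative. Now, since $\eqv$ is a congruence, the hypothesis $w\eqv w'$ may be concatenated on the right with $z$ to give $wz\eqv w'z$, and the hypothesis $v\eqv v'$ may be concatenated on the left with $x$ to give $xv\eqv xv'$. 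Chaining these through the common midpoint $xyz$ yields
\[
  xv' \eqv xv = xyz = wz \eqv w'z,
\]
and transitivity of $\eqv$ then gives $xv'\eqv w'z$, as required.

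There is essentially no obstacle: the only ingredients are the congruence property of $\eqv$ (recalled in Section~\ref{sec:grobner_bases}) and the associativity of concatenation in $\Spalt^*$, the latter being exactly what makes the two factorizations of $xyz$ coincide. The one point worth flagging is that the statement is phrased for arbitrary elements of $\eqv$, whereas Definition~\ref{def:compo_relation} introduces composition for relations $w=w'$ and $v=v'$ lying in $\Rp$; this discrepancy is harmless, since the argument nowhere uses membership in $\Rp$ but only the equivalences $w\eqv w'$ and $v\eqv v'$.
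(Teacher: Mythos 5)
Your proof is correct. Note that the paper does not actually spell out a proof of this Fact: the justification it implicitly offers is the algebraic computation in the Remark immediately following, namely the identity
\[
(xy-w')z - x(yz-v') \;=\; xv'-w'z
\]
in the free algebra $K\lb\Spalt\rb$, which shows that $xv'-w'z$ lies in the ideal $I$ generated by the differences $u-v$ with $(u,v)\in\eqv$, whence $xv'\eqv w'z$ by Proposition~\ref{prop:eqv_appartenir-ideal_eqv-monoid}. Your argument reaches the same conclusion entirely at the level of words: you use only that $\eqv$ is a congruence on $\Spalt^*$ (so $w\eqv w'$ gives $wz\eqv w'z$ and $v\eqv v'$ gives $xv\eqv xv'$) together with the associativity $x(yz)=(xy)z$, and chain through the common word $xyz$. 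This is more elementary --- it avoids the detour through the field $K$, the free algebra, and Proposition~\ref{prop:eqv_appartenir-ideal_eqv-monoid} --- while the paper's algebraic phrasing has the advantage of making visible why composition is the monoid counterpart of the S-polynomial construction used in the \G\ setting, which is the point of the Remark. Your closing observation is also well taken: Definition~\ref{def:compo_relation} states composition for relations of $\Rp$, but the Fact asserts closure for arbitrary elements of $\eqv$, and your proof correctly uses only $w\eqv w'$ and $v\eqv v'$, never membership in $\Rp$, so it covers the statement as written.
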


\begin{rem}
  To compute the composition of $(xy,w')$ and $(yz,v')$ with overlapping $y$, we write
\[
(xy-w')z-x(yz-v') = xv'-w'z
\]
and we deduce that the composition is $(xv', w'z)$. This convenient way of composing relations is in fact the way of composing polynomials in algebras.
\end{rem}

Adapting the classical notions---see for instance~\cite{kostrikin95:_combin_asymp_method_algeb}---we introduce:

\begin{defi}[\G\ basis and G-completeness]
\label{def:grob_basis}
  Let \sr\ be a semigroup \pres. A subset $\mathcal{B}$ of the congruence $\eqv$ generated by \Rp\ is a \emph{\G\ basis} of \sr\ if every element $(u,v)$ of $\mathcal{B}$ satisfies $u>v$ and if, for any two equivalent words $w$, $w'$ in $\Spalt^*$ with $w>w'$, there exists an element $(u,v)$ in $\mathcal{B}$ such that $u$ is a subword of $w$. If \Rp\ is a \G\ basis, the \pres\ is said to be \emph{G-complete.}
\end{defi}

Not every \pres\ is G-complete: in the monoid associated to the \pres\ $(a,b; abababa=bb)$ with homogeneous lexicographic order, the words $b^3a$ and $ab^3$ are equivalent and yet none of them admits $abababa$ as a subword.

Nevertheless, \G\ bases do exist: starting from \sr, complete the set of relations with every equality $u=v$ that holds in \monoid. The set of relations obtained this way is a \G\ basis but there is no practical interest of such a completion as it is noneffective: it requires a former solution to the word problem. Moreover, there is redundancy in such a basis in the sense that if $u=v$ is a relation, then $wu=wv$ is also a relation and both appear in the basis, although $wu=wv$ can be reduced by $u=v$. We shall see in the sequel, however, that running Algorithm~\ref{algo:grob_completion_monoid} completes the set of relations into a smaller \G\ basis than the set of all relations, that it does not need a solution to the word problem and that no redundancy is left in the \G\ basis so obtained\ie\ the \G\ basis is reduced.

\begin{defi}
\label{def:reduced_minimal_grob_basis}
  A \G\ basis is \emph{minimal} if no subset of it is a \G\ basis. A set of relations $\mathcal{U}$ is \emph{reduced} if every relation of the \pres\ is reduced. %A \pres\ is reduced if its set of relations is reduced.
\end{defi}

\begin{rem}
\label{rem:reduced_implies_not_same_leading_words}
  A set $\mathcal{U}$ of relations in which there exist two relations involving the same leading word is not reduced: if $u=v$ and $u=w$ are two relations of $\mathcal{U}$ satisfying $v<w$, one reduces $u=v$ by $u=w$ to $w=v$ and then $u=w$ to $u=v$.
\end{rem}

To recognize whether a set of relations is a reduced \G\ basis, we have the following criterion:

\begin{prop}
\label{prop:minimal_grob_if_no_red_and_no_comp_left}
  Assume $(\Spalt;\, \Rp)$ is a semigroup \pres\ and $\mathcal{U}$ a set satisfying $\Rp \subseteq \mathcal{U}\subseteq \eqv$. If $\mathcal{U}$  is reduced and if every composition of two relations of $\mathcal{U}$ reduces to 0, then $\mathcal{U}$ is a reduced \G\ basis of $\sr$. 
\end{prop}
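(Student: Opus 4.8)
The plan is to recast the statement as a confluence property of a terminating rewriting system, so that the hypothesis on compositions becomes exactly the resolvability of critical pairs. Orient each relation $(u,v)$ of $\mathcal{U}$ with $u>v$, which the well-ordering $<$ makes possible; reducedness then guarantees, through Remark~\ref{rem:reduced_implies_not_same_leading_words}, that distinct relations have distinct leading words. To each relation $(u,v)$ associate the rule $u\red v$, extended by compatibility to $xuy\red xvy$ for all words $x,y$. Since $<$ is admissible, every application of a rule strictly decreases a word, and as $\Spalt^*$ is well-ordered the rewriting relation $\red$ admits no infinite descending chain and hence terminates. Every word then has at least one normal form, and by Newman's lemma it will suffice to establish local confluence of $\red$ to obtain uniqueness of normal forms.

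For local confluence I would take a word $t$ to which two rules apply, at occurrences of leading words $u_1$ and $u_2$ with respective right-hand sides $u_1'$ and $u_2'$, and show that the two one-step reducts of $t$ have a common reduct. Three cases arise according to how the two occurrences meet. If they are disjoint, the rules act on separated factors and commute, so each reduct rewrites in one further step to the same word, and the critical pair is resolved. If one occurrence lies inside the other, say $u_1$ is a subword of $u_2$, then the relation with leading word $u_2$ is reducible by the one with leading word $u_1$, contradicting the reducedness of $\mathcal{U}$; reducedness likewise forbids $u_1=u_2$ with $u_1'\neq u_2'$ (Remark~\ref{rem:reduced_implies_not_same_leading_words}). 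Hence these two cases do not occur.

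The only remaining case is a proper overlap: $u_1=xy$ and $u_2=yz$ with $y$ nonempty, so the ambiguous factor of $t$ is $xyz$ and we may write $t=p\,xyz\,q$. The two reducts are then $p\,u_1'z\,q$ and $p\,xu_2'\,q$, which differ exactly by the composition $(xu_2',u_1'z)$ of the two relations with overlapping $y$ (Definition~\ref{def:compo_relation}). By hypothesis this composition reduces to~0\ie\ its two components admit, under $\red$, a common descendant $s$ (Definition~\ref{def:reduction_relation}); prepending $p$ and appending $q$ shows that $p\,u_1'z\,q$ and $p\,xu_2'\,q$ both reduce to $p\,s\,q$, which resolves the critical pair. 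The crux of the argument is this overlap case, and the care it demands is twofold: first, to see that ``reduces to~0'' genuinely produces a common $\red$-descendant of the two branches, and not merely an abstract chain of relations; and second, to check that reductions carried out inside the overlap remain legal in the ambient context, which is precisely the compatibility of the rules with concatenation.

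Local confluence together with termination yields confluence by Newman's lemma, so every word has a unique normal form and two words are $\eqv$-equivalent if and only if they share it. The defining property of a \G\ basis (Definition~\ref{def:grob_basis}) then follows: if $w\eqv w'$ with $w>w'$, their common normal form is $\le w'<w$ because $\red$ strictly decreases words, so $w$ is not itself normal, some rule applies to it, and the leading word $u$ of some $(u,v)\in\mathcal{U}$ is a subword of $w$. As every relation of $\mathcal{U}$ has been oriented with $u>v$, this shows that $\mathcal{U}$ is a \G\ basis; being reduced by hypothesis, it is a reduced \G\ basis, as required.
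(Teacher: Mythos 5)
Your proof is correct, but it takes a genuinely different route from the paper's. The paper disposes of this proposition in a single line: it invokes the Lemma on Composition for free associative algebras \cite[p.~30]{kostrikin95:_combin_asymp_method_algeb} and transfers it to monoids through Proposition~\ref{prop:eqv_appartenir-ideal_eqv-monoid}, which identifies $w\eqv w'$ with $w-w'\in I$. You instead re-prove that lemma from scratch in the special case at hand, entirely at the level of words: you orient $\mathcal{U}$ into a terminating string rewriting system and run the standard Newman's-lemma argument, with a three-way critical-pair analysis in which reducedness eliminates the nested and equal-leading-word peaks (via Remark~\ref{rem:reduced_implies_not_same_leading_words}) and the hypothesis that compositions reduce to $0$ resolves the genuine overlaps; your identification of the composition $(xu_2',u_1'z)$ with the overlap peak matches Definition~\ref{def:compo_relation} exactly. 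You also correctly flag the one point where the paper's definitions must be read with care: ``reduces to $0$'' in Definition~\ref{def:reduction_relation} must be understood as allowing reduction of either component of the pair (as the paper itself does in Remark~\ref{rem:reduced_implies_not_same_leading_words} and in its computations), so that it really produces a common $\red$-descendant of the two branches. What each approach buys: the paper's citation is short and situates the result in the established Gr\"obner--Shirshov framework, but it depends on the algebra--monoid dictionary of Proposition~\ref{prop:eqv_appartenir-ideal_eqv-monoid}; your argument is self-contained and never mentions algebras or ideals, which accords with the paper's own stated aim that ``the whole process tak[es] place in the monoid,'' at the price of redoing a known diamond-lemma proof. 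Two harmless loose ends: $\mathcal{U}\subseteq\eqv$ could in principle contain trivial pairs $(u,u)$, which your orientation step silently excludes, and the direction ``$w\eqv w'$ implies equal normal forms'' uses $\Rp\subseteq\mathcal{U}$ together with the Church--Rosser property, which you gloss over but which is standard.
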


\begin{proof}
The result is a rewriting of Lemma on Composition~\cite[p.~30]{kostrikin95:_combin_asymp_method_algeb} adapted to the context of monoids using Proposition~\ref{prop:eqv_appartenir-ideal_eqv-monoid}.
\end{proof}

The conjunction of Proposition~\ref{prop:eqv_appartenir-ideal_eqv-monoid} and Lemma~\ref{lemma:groebner_reste_monoid} give an algorithm (based on the one working in the free algebras) that computes a reduced \G\ basis for a semigroup \pres. There are no references to algebras nor to ideals either, the whole process taking place in the monoid. In a reduced set of relations, we order pairs by putting 
\[
(u_1=v_1,u_2=v_2)<(u_3=v_3,u_4=v_4) \Leftrightarrow u_1u_2 < u_3u_4.
\]
Since the set of relations is reduced, the order $<$ is linear.

\begin{algo}[G-completion]
\label{algo:grob_completion_monoid}
Assume $(\Spalt;\,\Rp)$ is a \pres. \\
Start with $\mathcal{U}=\Rp$.
\begin{tabbing}
\quad \= 1. \= Reduce all the relations of $\mathcal{U}$ until no possible reduction is left;\\
      \> 2. \> Delete all relations $v=v$ of $\mathcal{U}$; \\
      \> 3. \> $\mathtt{WHI}$\=$\mathtt{LE}$ there exist two relations of $\mathcal{U}$ that can be composed \\
      \>    \>    \> Add the result of composition of the smallest pair to $\mathcal{U}$;\\
      \>    \>    \> Go to 1;\\
      \> $\mathtt{OUTPUT:}$ $\mathcal{U}$.
\end{tabbing}
\end{algo}

\begin{prop}
\label{prop:grob_completion_monoid}
  If Algorithm~\ref{algo:grob_completion_monoid} terminates, the final set of relations $\mathcal{U}$ is a
reduced \G\ basis of \sr.
\end{prop}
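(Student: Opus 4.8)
The plan is to deduce the statement from Proposition~\ref{prop:minimal_grob_if_no_red_and_no_comp_left}, which already characterizes reduced \G\ bases: it suffices to check that the output $\mathcal{U}$ consists of relations holding in \monoid\ (that is, $\mathcal{U}\subseteq\eqv$), is reduced, and has the property that every composition of two of its relations reduces to~0. Since Proposition~\ref{prop:minimal_grob_if_no_red_and_no_comp_left} is stated for a set lying between $\Rp$ and $\eqv$, I will apply it with $\Rp$ replaced by $\mathcal{U}$ itself; this is legitimate provided $\mathcal{U}$ generates the same congruence as $\Rp$, so that a reduced \G\ basis of $(\Spalt;\mathcal{U})$ is exactly the same thing as a reduced \G\ basis of~$\sr$.

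First I would isolate the loop invariant that, at every stage of Algorithm~\ref{algo:grob_completion_monoid}, one has $\mathcal{U}\subseteq\eqv$ and the congruence generated by $\mathcal{U}$ equals $\eqv$. This is checked operation by operation. Deleting a trivial relation $v=v$ (step~2) changes neither property. For a reduction (step~1), replacing $(w,w')$ by $(v_lv'v_d,w')$ using a relation $(v,v')$ with $w=v_lvv_d$ preserves the generated congruence: from $v\eqv v'$ one gets $w=v_lvv_d\eqv v_lv'v_d$, so the new pair is $\eqv$-equivalent to the old one, and each is recovered from the other together with $(v,v')$. For a composition (step~3), the Fact that the composition of two elements of~$\eqv$ again lies in~$\eqv$ shows that the added pair belongs to~$\eqv$, hence to the congruence already generated; thus neither property changes. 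Throughout, I keep each relation oriented with its larger word on the left, as required by Definition~\ref{def:grob_basis}, re-orienting after a reduction when necessary.

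Next I would read off the two termination conditions. Because every return to step~3 is preceded by a full execution of steps~1 and~2, at the moment the $\mathtt{WHILE}$ test fails the current set $\mathcal{U}$ is reduced and contains no trivial relation. Moreover, failure of the $\mathtt{WHILE}$ test means precisely that no composition of two relations of~$\mathcal{U}$ survives the reduction of steps~1--2, i.e. that every such composition reduces to~0. Together with the invariant, which supplies $\mathcal{U}\subseteq\eqv$ and the equality of congruences, all hypotheses of Proposition~\ref{prop:minimal_grob_if_no_red_and_no_comp_left} applied to $(\Spalt;\mathcal{U})$ are met, and I conclude that $\mathcal{U}$ is a reduced \G\ basis of~$\sr$.

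The delicate point is the last condition: one must make sure that at termination \emph{every} composition of the \emph{final} $\mathcal{U}$ reduces to~0, not merely those processed at some earlier stage, since reductions and newly added relations keep modifying $\mathcal{U}$. The structure ``add the smallest composition, then go to~1'' is exactly what guarantees this, for any surviving non-reducible composition would be added and then reappear after re-reduction, so the loop could not have exited. Making this bookkeeping precise---in effect, that the loop halts only when the reduced set is closed under composition up to reduction to~0---is where the argument requires the most care.
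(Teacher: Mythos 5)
Your proof is correct and follows the same route as the paper, whose entire proof is the one-line assertion that the output of Algorithm~\ref{algo:grob_completion_monoid} fulfills the hypotheses of Proposition~\ref{prop:minimal_grob_if_no_red_and_no_comp_left}; you have simply made explicit the loop invariant, the reorientation bookkeeping, and the reading of the termination condition that this assertion presupposes. In particular, your repair of the hypothesis $\Rp\subseteq\mathcal{U}$ (which may fail for the final $\mathcal{U}$, since relations of $\Rp$ can be reduced or deleted) by applying the criterion to $(\Spalt;\mathcal{U})$ after verifying that $\mathcal{U}$ generates the same congruence $\eqv$ is exactly the justification the paper leaves implicit.
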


\begin{proof}
The set of relations $\mathcal{U}$ eventually obtained fulfills the conditions of Proposition~\ref{prop:minimal_grob_if_no_red_and_no_comp_left}.
\end{proof}

Algorithm~\ref{algo:grob_completion_monoid} need not terminate in a finite number of steps. In fact, as we will see in Section~\ref{s_sec:using_grobner_bases}, whenever G-completion stops, we can solve the word problem. Conversely, if \sr\ is a \pres\ for a semigroup with undecidable word problem---and such \press\ exist---then the execution of Algorithm~ \ref{algo:grob_completion_monoid} on \sr\ cannot terminate. But we shall see below that this may also happen for \press\ of semigroups with an easy word problem.

\subsection{An example}
\label{s_sec:grobner_example}

We now illustrate Algorithm~\ref{algo:grob_completion_monoid} by computing a reduced \G\ basis explicitly. This example also shows that Algorithm~\ref{algo:grob_completion_monoid} may not terminate.

\begin{example}
\label{eg:meme_completion_cas_groebner}
Consider the \pres\ ${(a,b;\, bab=ba^2)}$ and the deglex ordering induced by $b>a$. Following Algorithm~\ref{algo:grob_completion_monoid}, we alternatively perform reduction steps and composition steps. A reduction by the relation numbered $(i)$ will be denoted $\stackrel{(i)}{\red}$. We start with $\mathcal{U}=\{ bab=ba^2\}$ and we number $(1)$ the relation $bab=ba^2$. 

Since there is no reduction at this stage, we first compose $(1)$ with itself to get:
\[
  (bab-ba^2)ab-ba(bab-ba^2)  =  -ba^2ab+bab^2  \stackrel{(1)}{\red} -ba^3b +a^4b,
\]
that is, the relation 
\begin{equation}
  \label{eq:ba^3b=a^4b}
  \tag{2}
  ba^3b=a^4b.
\end{equation}
Thus we obtain $\mathcal{U}_1=\{bab=ba^2,\, ba^3b=ba^4\}$. There is no reduction since $bab$ is not a subword of $ba^3b$. Composing $(2)$ with $(1)$, we get:
\[
  (ba^3b-ba^4)ab-ba^3(bab-ba^2) =  -ba^4ab+ba^3ba^2 \stackrel{(2)}{\red}  -ba^5b +a^6b,
\]
and therefore the relation
\begin{equation}
  \label{eq:ba^5b=a^6b}
  ba^5b=a^6b.
\tag{3}
\end{equation}
We obtain $\mathcal{U}_2=\{ bab=ba^2,\, ba^3b=ba^4,\, ba^5b=ba^6\}$. We claim that the algorithm successively adds all relations 
\begin{equation}
  \label{eq:ba^2n-1b=a^2nb}
  ba^{2n-1}b=a^{2n}b.
  \tag{n}
\end{equation}
We prove it by induction on $n$. The case $n=1$ is clear. Suppose $n>1$ and compose relation $(n)$ with relation $(1)$ to get $(n+1)$:
% \begin{eqnarray*}
%   (ba^{2n-1}b-ba^{2n})ab-ba^{2n-1}(bab-ba^2) & = & -ba^{2n}ab+ba^{2n+1}ba^2\\
%                             & \stackrel{(n)}{\red} & -ba^{2(n+1)-1}b +ba^{2n+2}\\
%                             & = & -ba^{2(n+1)-1}b +ba^{2(n+1)}.
% \end{eqnarray*}
\begin{eqnarray*}
&&  (ba^{2n-1}b-ba^{2n})ab-ba^{2n-1}(bab-ba^2) \\
&&\hspace{4cm} =  -ba^{2n}ab+ba^{2n+1}ba^2\\
&&\hspace{4cm} \stackrel{(n)}{\red} -ba^{2(n+1)-1}b +ba^{2n+2}\\
&&\hspace{4cm} =  -ba^{2(n+1)-1}b +ba^{2(n+1)}.
\end{eqnarray*}
Hence we obtain $\mathcal{U}_\infty=\{ ba^{2n-1}b=ba^{2n};\, n\geq 1 \}$. We claim that $\mathcal{U}_\infty$ is a reduced \G\ basis of the \pres\ $(a,b;\, bab=ba^2)$. By Proposition~\ref{prop:minimal_grob_if_no_red_and_no_comp_left}, it suffices to check that all compositions in $\mathcal{U}_\infty$ reduce to 0. Now the composition of $(n)$ and $(m)$ is:
\begin{eqnarray*}
&& (ba^{2n-1}b-ba^{2n})a^{2m-1}b - ba^{2n-1}(ba^{2m-1}b-ba^{2m}) \\
&&\hspace{6cm} = -ba^{2n}a^{2m-1}b + ba^{2n-1}ba^{2m}\\
&&\hspace{6cm} \stackrel{(n)}{\red} -ba^{2(m+n)-1}b +a^{2(m+n)}b \\
&&\hspace{5.767cm} \stackrel{(m+n)}{\red} -ba^{2(m+n)} +ba^{2(m+n)} = 0.
\end{eqnarray*}
So there is no reduction and no composition left in $\mathcal{U}_\infty$. Thus, the set $\mathcal{U}_\infty$ is a reduced \G\ basis.
\end{example}

\subsection{Using Gr\"obner bases}
\label{s_sec:using_grobner_bases}

In this section, we recall that the knowledge of a \G\ basis of a semigroup \pres\ allows to solve the word problem of the associated monoid.

\begin{defi}
  Let \sr\ be a G-complete semigroup \pres. A word $u$ of $\Spalt^\ast$ is \emph{G-reduced} (or simply \emph{reduced}) if none of its subwords appears in a relation of \Rp\ as a leading word. The \emph{G-reduction} of a word $u$ of $\Spalt^\ast$ is the G-reduced word $\underline{u}$ \Rp-equivalent to $u$.  
\end{defi}

\begin{rem}
  The unicity of G-reduction follows from the properties of \G\ bases: let $\underline{w}$ and $\underline{w'}$ be two G-reductions of a word $u$; the equivalence $\underline{w}\eqv \underline{w'}$ implies that there exists a relation $v=v'$ in \Rp, with $v$ a subword of $max(\underline{w},\underline{w'})$, contradicting the hypothesis that both words were reduced.
\end{rem}

G-reduction provides a unique normal form for each element of the considered monoid, and therefore solves the word
problem: 

\begin{prop}
Assume that $\sr$ is a G-complete presentation. Then two words~$w, w'$ of~$\Spalt^*$
represent the same element of \monoid\ if and only if the reductions of~$w$ and~$w'$ are equal.
\end{prop}

\begin{example}
With the setting of Section~\ref{s_sec:grobner_example}, the word $aba^3$ is G-reduced; the word $aba^3bab$ is not reduced however, since both $ba^3b$ and $bab$ appear in $\mathcal{U}_\infty$ as leading words of relations, namely $ba^3b=ba^4$ and $bab=ba^2$. To reduce $aba^3bab$, we substitute, for example, the subword $ba^3b$ with $ba^4$ to get $aba^5b$ and then $ba^5b$ with $ba^6$ to obtain the G-reduced word $aba^6$ equivalent to $aba^3bab$. Starting the reduction with the relation $bab=ba^2$ instead yields $aba^3ba^2$ and then reducing with the relation $ba^3b=ba^4$, we get again the reduced word $aba^6$.
\end{example}

\section{Word \redr}
\label{sec:word_reversing}
Word \redr\ is a combinatorial operation on words that also solves the word problem for a presented monoid whenever the considered \pres\ satisfies an \emph{ad hoc} condition called completeness. Not every \pres\ is complete for \redr, but, as in the case of \G\ bases, there exists a completion procedure that possibly transforms an initially incomplete \pres\  into a complete one. 

\subsection{Word \redr}
\label{s_sec:word_reversing}

We recall results about word \redr\ (and refer to~\cite{dehornoy03:_compl} for more details) so as to be able to compare this technique with the \G\ methods exposed in Section~\ref{s_sec:grobner_bases}. 

Let \sr\ be a semigroup \pres. For every letter $s$ in \Spalt, we introduce a disjoint copy $s^{-1}$ of $s$ and we denote by $\Spalt^{-1}$ the set of all $s^{-1}$'s. The elements of \Spalt\ (resp. $\Spalt^{-1}$) are said to be \emph{positive} (resp. \emph{negative}). For $s_1,\ldots,s_n\in\Spalt$ and $u=s_1\ldots s_n$ a word in $\Spalt^\ast$, we write $u^{-1}$ for the word $s_n^{-1}\ldots s_1^{-1}$ in $\Spalt^{-1\ast}$.

\begin{defi}[\redr]
\label{defi:word_reversing} 
Let \sr\ be a semigroup \pres\ and let~$w$ and~$w'$ be words on~$\Spalt\cup\Spalt^{-1}$. We say that~$w\revelem w'$ is true if~$w'$ is obtained from~$w$
\begin{itemize}
\item[--] either by deleting a subword $u^{-1}u$ with $u\in\Spalt^+$,
\item[--] or by replacing a subword $u^{-1}v$ where $u$, $v$ are nonempty words on \Spalt\ with a word
$v'u'{}^{-1}$ such that $uv'=vu'$ is a relation of \Rp. 
\end{itemize}
We say that $w$ is \emph{reversible} to $w'$, and we write $w\rev w'$, if there exists a sequence of words $w_0$, $w_1$, $\ldots$, $w_n$ satisfying $w_i\revelem w_{i+1}$ for every $i$ and $w=w_0$ and $w'=w_n$. We say in that case that $w'$ is a \emph{\redr} of $w$.
\end{defi}

\begin{example}
Take the standard Artin \pres\ $(a,b; bab=aba)$ and start with the word $a^{-1}b^2$: we successively get
\[ a^{-1}b^2 \rev\ bab^{-1}a^{-1}b \rev\ bab^{-1}bab^{-1}a^{-1} \rev\ ba\ew ab^{-1}a^{-1}= ba ab^{-1}a^{-1}.\]
Note that \redr\ sequences need not terminate. We say that word \redr\ is  \emph{convergent} if, starting from any word, there exists a terminating \redr\ sequence. The \pres\ above is convergent, whereas  $(a,b;ba = a^2b)$ is not: 
\[ b^{-1}ab \rev\ ab^{-1}a^{-1}b \rev\ a\underline{b^{-1}ab}a^{-1}.\]
\end{example}

The next proposition exhibits a link between \rev\ and \eqv. 

\begin{prop}[\protect{\cite[Prop.~1.9]{dehornoy03:_compl}}]
\label{prop:ret_yields_monoid_equiv}
  Assume that \sr\ is a semigroup \pres, and $u$, $v$ are words in $\Spalt^\ast$. Then $u^{-1}v\rev \ew$ implies $u\eqv v$.
\end{prop}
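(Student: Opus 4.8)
The plan is to prove the slightly more general statement that, for positive words $u,v,u',v'$ in $\Spalt^*$,
\[
u^{-1}v \rev v'u'^{-1} \quad\text{implies}\quad uv' \eqv vu',
\]
and then to read off the proposition as the special case $u'=v'=\ew$, where the conclusion collapses to $u\eqv v$. The guiding principle is that every elementary reversing step mirrors a \emph{commutative square} in \monoid. Indeed, a substitution step (the second clause of Definition~\ref{defi:word_reversing}) replaces a factor $p^{-1}q$ by $q'p'^{-1}$ using a relation $pq'=qp'$ of \Rp; since $\eqv$ is the congruence generated by \Rp, this is exactly the monoid equality $pq'\eqv qp'$. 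A deletion step removing $s^{-1}s$ is the degenerate square with $p=q=s$ and $p'=q'=\ew$, whose associated relation $s\eqv s$ holds trivially. Thus each step, of either kind, is an instance of the same elementary move: it reads off a commutative square from the current junction where a negative letter meets a positive one.

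I would record these squares in a planar reversing diagram, built up incrementally. Place $u$ along the top edge (read left to right) and $v$ along the left edge (read top to bottom), sharing the top-left vertex, and let each elementary step glue one further square onto the partial diagram at the relevant junction. After the whole sequence $u^{-1}v\rev v'u'^{-1}$ has been carried out, the completed diagram is a full grid whose two outer boundary routes between the far corners spell the words $uv'$ and $vu'$; pasting the individual square relations---which is legitimate precisely because $\eqv$ is a congruence, hence compatible with left and right multiplication---yields the boundary equality $uv'\eqv vu'$. Formally this is an induction on the number $n$ of elementary steps. For $n=0$ the word $u^{-1}v$ already equals $v'u'^{-1}$ letter for letter, which forces $u=\ew,\ v=v',\ u'=\ew$ (or symmetrically $v'=\ew,\ v=\ew,\ u=u'$), and in either case $uv'\eqv vu'$ is immediate. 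The inductive step absorbs one more square into the diagram and invokes the congruence property to extend the boundary relation from the diagram with one fewer cell.

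The point demanding care is that the intermediate words of a reversing sequence are \emph{not} in general of the clean shape (negative block)(positive block): a single substitution step applied to $u^{-1}v$ already produces a word of the form (negative)(positive)(negative)(positive). This is exactly why a naive induction on the $u_i^{-1}v_i$ normal form breaks down, and why one is forced to carry the entire partial diagram as the inductive datum rather than a single pair of positive words. I expect this bookkeeping---specifying precisely how each step attaches a new cell at the active junction of the current diagram, and checking that the two boundary words evolve as claimed---to be the main technical obstacle; once it is in place, the congruence property of $\eqv$ does the rest. I note finally that the cruder invariant obtained by mapping everything into the group \grpSR\ only shows that $u$ and $v$ have the same image in \grpSR, which is strictly weaker than $u\eqv v$, so the monoid-level diagram really is needed.
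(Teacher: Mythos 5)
The paper offers no proof of this proposition---it is quoted directly from \cite[Prop.~1.9]{dehornoy03:_compl}---and your argument is correct and essentially reproduces the proof from that source: the strengthened statement that $u^{-1}v\rev v'u'{}^{-1}$ implies $uv'\eqv vu'$ is exactly Dehornoy's formulation, established there by the same induction on elementary \redr\ steps, pasting the commutative squares (relations of \Rp\ or trivial deletion cells) of a \redr\ diagram and using that $\eqv$ is a congruence. Your observation that intermediate signed words do not have the shape (negative)(positive), so that the partial diagram rather than a pair of positive words must serve as the inductive datum, is precisely the bookkeeping point the diagrammatic proof in the cited reference is designed to handle.
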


The converse is not true in general: there exist \press\ for which word \redr\ fails to detect equivalence and thus does not solve the associated word problem; these \press\ lack the completeness property, which we define as follows.

\begin{defi}[R-completeness]
\label{def:word_revers_completeness}
A semigroup \pres\ is \emph{R-complete} if
$u\eqv\nobreak v$ implies $u^{-1}v\rev \ew$.
\end{defi}

By very definition, we have

\begin{prop}
Assume that $\sr$ is a R-complete presentation such that $\Rp$-word \redr\ is convergent. Then two words~$w, w'$
of~$\Spalt^*$ represent the same element of \monoid\ if and only if $w^{-1} w' \rev \ew$ holds.

\end{prop}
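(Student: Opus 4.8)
The plan is to read off both implications of the biconditional directly from the two results immediately preceding the statement, after one preliminary reduction. The preliminary point is that $\monoid$ is \emph{by construction} the quotient $\Spalt^*/\eqv$, so that two words $w,w'$ of $\Spalt^*$ represent the same element of $\monoid$ exactly when $w\eqv w'$. Hence the content to be established is the equivalence between $w\eqv w'$ and $w^{-1}w'\rev\ew$, and I would organize the argument as two short, essentially symmetric steps.

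For the implication $w\eqv w'\Rightarrow w^{-1}w'\rev\ew$, I would invoke R-completeness. Since $w$ and $w'$ are positive words and $\sr$ is assumed R-complete, Definition~\ref{def:word_revers_completeness} applies verbatim, with its $u,v$ taken to be $w,w'$, and yields $w^{-1}w'\rev\ew$ at once. For the reverse implication $w^{-1}w'\rev\ew\Rightarrow w\eqv w'$, I would apply Proposition~\ref{prop:ret_yields_monoid_equiv}, again with $u=w$ and $v=w'$: as $w,w'\in\Spalt^*$, its hypothesis is met and it delivers $w\eqv w'$ immediately. Combining the two implications, and recalling the identification of ``same element of $\monoid$'' with $w\eqv w'$, closes the biconditional.

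The one point deserving comment, and the place where I would be most careful, is the status of the convergence hypothesis, which in fact plays no role in the logical equivalence above. This is expected: the equivalence is the literal conjunction of Proposition~\ref{prop:ret_yields_monoid_equiv} and R-completeness, which is why the claim can be asserted ``by very definition.'' Convergence is what upgrades this equivalence into a genuine decision procedure for the word problem: it guarantees that a \redr\ sequence started from $w^{-1}w'$ can always be driven to a terminal word in finitely many steps, so that one may actually \emph{compute} a terminal form and test whether it equals $\ew$, rather than merely knowing that a path to $\ew$ must exist whenever $w\eqv w'$. There is no real obstacle in the proof itself; the only thing to watch is keeping the two near-mirror directions matched to the correct prior statement, since it is easy to cite R-completeness where Proposition~\ref{prop:ret_yields_monoid_equiv} is needed, or conversely.
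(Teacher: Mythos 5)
Your proof is correct and matches the paper's approach exactly: the paper offers no written proof, asserting the proposition ``by very definition,'' which is precisely the two-step reading you spell out (R-completeness for the forward direction, Proposition~2.4 for the converse). Your observation that convergence plays no role in the equivalence itself but only in making it an effective decision procedure is also accurate.
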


\label{s_sec:checking_completeness}

In the sequel, we shall need a criterion for establishing whether a presentation is possibly R-complete. We shall use the
one we describe now.

\begin{defi}[homogeneity]
\label{def:homogeneous}
  We say that a positive \pres\ \sr\ is \emph{homogeneous} if it admits a \emph{pseudolength}, the latter being defined as a map $\lambda$ of $\Spalt^\ast$ to the nonnegative integers, satisfying $\lambda(su)>\lambda(u)$ for each $s$ in \Spalt\ and $u$ in $\Spalt^\ast$, and invariant under $\eqv$.
\end{defi}

Note that if all pairs in \Rp\ have the same length then the length itself is a pseudolength for \sr. We state now a criterion to check R-completeness:

\begin{algo}
  \label{algo:word_rev_completeness_criterion}
Let \sr\ be a homogeneous semigroup \pres. For each triple of letters $s$, $t$, $r$ in \Spalt:
\begin{tabbing}
  \quad \= 1. \= Reverse $s^{-1}rr^{-1}t$ to all possible words of the form $uv^{-1}$, with $u,v\in\Spalt^\ast$;\\
        \> 2. \> For each $uv^{-1}$ so obtained, check $(su)^{-1}(tv)\rev \ew$.
\end{tabbing}
\end{algo}

\begin{prop}[\protect{\cite[Algorithm~4.8]{dehornoy03:_compl}}]
  \label{prop:word_rev_completeness_criterion}
Assume that \sr\ is a semigroup homogeneous \pres. Then \sr\ is R-complete if and only if the answer at Step 2 of 
Algorithm~\ref{algo:word_rev_completeness_criterion} is positive for each triple of letters $(r, s, t)$ and each word~$u
v^{-1}$ obtained at Step 1.
\end{prop}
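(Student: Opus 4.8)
The plan is to deduce this criterion from the general theory of word \redr\ by inserting, between R-completeness and the letter-by-letter test of Algorithm~\ref{algo:word_rev_completeness_criterion}, an intermediate \emph{cube condition} formulated for arbitrary triples of positive words. For words $u,v,w$ in $\Spalt^\ast$, let $C(u,v,w)$ denote the word-level analogue of Step~2: reverse $u^{-1}ww^{-1}v$ to every word of the form $pq^{-1}$ with $p,q\in\Spalt^\ast$, and require $(up)^{-1}(vq)\rev\ew$ for each such output. The proposition follows once we establish (a) that R-completeness is equivalent to $C(u,v,w)$ holding for all triples of words, and (b) that, under homogeneity, $C(u,v,w)$ for all words follows from $C(s,t,r)$ for all triples of letters---this last being exactly the test run by Algorithm~\ref{algo:word_rev_completeness_criterion}.

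For step (a) I would view \redr\ as a rewriting process on words over $\Spalt\cup\Spalt^{-1}$ whose terminal forms are the words $pq^{-1}$ with $p,q$ positive. The content of R-completeness (Definition~\ref{def:word_revers_completeness}) is that this process is \emph{confluent} on negative--positive words, in the sense that $u\eqv v$ forces $u^{-1}v\rev\ew$; together with Proposition~\ref{prop:ret_yields_monoid_equiv}, which supplies the reverse implication, R-completeness is thus precisely a confluence statement. The cube condition $C(u,v,w)$ is the corresponding \emph{local} confluence statement at the elementary critical configuration $u^{-1}ww^{-1}v$: it records that the two choices of inner redex to reverse first ($u^{-1}w$ versus $w^{-1}v$) can be reconciled. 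The forward implication in (a) is immediate, since global confluence entails each of its local instances. For the converse I would run a Newman-type argument: homogeneity guarantees that \redr\ terminates, because the pseudolength $\lambda$ of Definition~\ref{def:homogeneous} satisfies $\lambda(su)>\lambda(u)$ and is invariant under $\eqv$, which bounds the sizes of the positive factors produced and forbids infinite \redr\ sequences; termination together with local confluence then yields global confluence, hence R-completeness.

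For step (b), the reduction from words to letters, I would argue by induction on the pseudolengths of the words occurring in $C(u,v,w)$. Writing one of the words, say $w$, as $aw_0$ with $a$ a letter and $w_0$ strictly shorter, the reversing of $u^{-1}ww^{-1}v$ factors into a reversing governed by the single letter $a$---handled by the letter-level cube conditions---followed by reversings involving only the shorter word $w_0$, handled by the induction hypothesis; the strict inequality in Definition~\ref{def:homogeneous} makes this induction well-founded. Peeling letters off $u$ and $v$ in the same way reduces every instance of $C$ to finitely many instances at single letters. Finally, Step~1---reversing $s^{-1}rr^{-1}t$ to all words $uv^{-1}$---is a terminating, hence finite, computation for the same homogeneity reason, and Step~2, the check $(su)^{-1}(tv)\rev\ew$, is literally $C(s,t,r)$; so a positive answer of Algorithm~\ref{algo:word_rev_completeness_criterion} is exactly the letter-level cube condition.

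The main obstacle I expect lies in step (b): one must show precisely how the reversing of $u^{-1}ww^{-1}v$ organizes itself into the reversing dictated by a single leading letter followed by reversings on strictly shorter words, so that a composite cube condition genuinely decomposes into letter-level ones without discarding any of the intermediate $pq^{-1}$ outputs. This compatibility of \redr\ with concatenation---essentially that reversing a product can be carried out by reversing the pieces in the right order---is where the real bookkeeping is concentrated; by contrast, the Newman-type argument of step (a) is comparatively routine once termination has been secured from homogeneity.
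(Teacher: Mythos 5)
Your overall skeleton---interpolating a word-level cube condition between R-completeness and the letter-by-letter test, then reducing words to letters by an induction on pseudolength---is in fact the structure of the argument in \cite{dehornoy03:_compl}, which is where the paper takes this statement from without reproving it. But the converse half of your step (a) rests on a false lemma: homogeneity does \emph{not} imply that \redr\ terminates, and the paper itself contains the counterexample. The \pres\ $(a,b;\,ba=a^2b)$ of Section~\ref{s_sec:word_reversing} is homogeneous: writing $\sharp_b(w)$ for the number of $b$'s in $w$, the map $\lambda(w)=\sharp_b(w)+\sum 2^{t}$, the sum ranging over the occurrences of $a$ in $w$ with $t$ the number of $b$'s to the left of the occurrence, is invariant under $\eqvelem$ (replacing a factor $ba$ by $a^2b$ leaves $\sharp_b$ unchanged and turns the contribution $2^t$ of the $a$ involved into $2\cdot 2^{t-1}$, all other contributions being unaltered) and satisfies $\lambda(aw)=\lambda(w)+1$ and $\lambda(bw)=2\lambda(w)-\sharp_b(w)+1>\lambda(w)$. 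Yet, as displayed there, $b^{-1}ab\revelem ab^{-1}a^{-1}b\revelem ab^{-1}aba^{-1}$, and an immediate induction shows the process continues deterministically through $a^nb^{-1}aba^{-n}$, each of these words having a single reversible factor: \emph{every} \redr\ sequence from $b^{-1}ab$ is infinite, so homogeneity bounds neither the positive factors produced nor the length of \redr\ sequences. Worse for a Newman-type strategy, this \pres\ is R-complete, its unique relation being of the form $b\ldots=a\ldots$ \cite[Prop.~6.4]{dehornoy02:_group_garsid}; so inside the very class covered by the proposition, R-completeness coexists with nontermination, and no argument channelled through termination plus local confluence can establish the criterion.

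There is also a mismatch of logical shape that termination would not cure: R-completeness is the reachability statement that $u\eqv v$ forces \emph{some} \redr\ sequence from $u^{-1}v$ to reach $\ew$; \redr\ is nondeterministic (cf.\ the caption of Fig.~\ref{fig:rev_bbb}), and the cube condition is a $\forall\exists$ assertion over all terminal outputs $pq^{-1}$, not the joinability of two one-step reducts of a single word that Newman's lemma manipulates. The argument that actually works, and the one in the cited source, replaces your step (a) by a direct induction on derivations: given a chain $u=u_0\eqvelem u_1\eqvelem\cdots\eqvelem u_n=v$, one proves $u^{-1}v\rev\ew$ by induction on the pseudolengths of the words occurring in the chain and on $n$, the word-level cube condition providing exactly the diagram-pasting needed at each transitivity step; homogeneity serves only to make this induction well-founded, never to halt \redr. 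Your step (b), peeling letters off by induction on $\lambda$, is sound in outline and is indeed where the genuine bookkeeping sits; note that homogeneity must enter there essentially, the letters-to-words implication being unavailable for arbitrary \press, which is why the proposition carries the homogeneity hypothesis at all. Finally, drop the closing claim that Step~1 of Algorithm~\ref{algo:word_rev_completeness_criterion} halts ``for the same homogeneity reason'': it inherits the same false premise, and the equivalence nowhere requires the collection of words $uv^{-1}$ obtained at Step~1 to be finite---the criterion quantifies over whatever terminal words exist.
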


\subsection{Reversing-completion}
\label{s_sec:completion_word_reversing}

When a \pres\ is not R-complete\ie\  when word \redr\ fails to prove some equivalence $u\eqv v$, there are completion procedures, in particular when the \pres\ is homogeneous:

\begin{algo}
\label{algo:word_rev_completion}
  The setting is the one of Algorithm~\ref{algo:word_rev_completeness_criterion}. 
\begin{tabbing}
  \quad \= $\mathtt{REPEAT}$ \=\\
        \>    \> Reverse $s^{-1}rr^{-1}t$ to all possible words of the form $uv^{-1}$;\\ 
        \>    \> $\mathtt{FOR}$ \= each $uv^{-1}$ so obtained:\\
        \>    \>     \> $\mathtt{IF}$ $(su)^{-1}(tv)\not\rev \ew$\\
        \>    \>     \> $\mathtt{THEN}$ add the relation $su=tv$ to the \pres;\\
        \> $\mathtt{UNTIL}$ no new relation has been added to the \pres;\\
        \> $\mathtt{OUTPUT:}$ a \pres.
\end{tabbing}
\end{algo}

\begin{prop}[\protect{\cite[\S~5]{dehornoy03:_compl}}]
\label{prop:word_rev_completion}
  When Algorithm~\ref{algo:word_rev_completion} terminates, it yields a R-complete \pres.
\end{prop}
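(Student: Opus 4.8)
The plan is to read off the termination condition of Algorithm~\ref{algo:word_rev_completion} and match it against the R-completeness criterion of Proposition~\ref{prop:word_rev_completeness_criterion}. Write $\Rp'$ for the output \pres\ and observe that the body of the $\mathtt{REPEAT}$ loop is, up to the conditional addition of a relation, exactly the computation performed by Algorithm~\ref{algo:word_rev_completeness_criterion}: for each triple $(s,t,r)$ it reverses $s^{-1}rr^{-1}t$ to every word $uv^{-1}$ and then tests whether $(su)^{-1}(tv)\rev\ew$. The algorithm stops precisely when a whole pass of the loop adds no relation; during that last pass the set of relations is frozen equal to $\Rp'$, so every \redr\ and every test is carried out with respect to $\Rp'$. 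That no relation is added means that the test $(su)^{-1}(tv)\rev\ew$ succeeds for every triple $(s,t,r)$ and every $uv^{-1}$ obtained by reversing $s^{-1}rr^{-1}t$. This is verbatim the condition appearing in Proposition~\ref{prop:word_rev_completeness_criterion}, so it remains only to secure the hypothesis of that proposition, namely that $\Rp'$ is homogeneous.

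To this end I would show that homogeneity is preserved throughout the run: every intermediate \pres, and in particular $\Rp'$, is homogeneous for the same pseudolength $\lambda$ as the initial \pres. Since $\lambda$ is a fixed map on $\Spalt^\ast$ satisfying $\lambda(su)>\lambda(u)$, the only thing that can be lost when new relations are added is the invariance of $\lambda$ under the congruence. Hence it suffices to check, for each relation $su=tv$ inserted by the algorithm, that $\lambda(su)=\lambda(tv)$; granting this, $\lambda$ is preserved by every application of a new relation, so it stays invariant under the enlarged congruence, and homogeneity is maintained inductively.

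The equality $\lambda(su)=\lambda(tv)$ will follow from the stronger claim that each inserted relation is a consequence of the relations already present, that is, $su\eqv tv$ in the monoid; the pseudolength being invariant under $\eqv$ then forces $\lambda(su)=\lambda(tv)$, and, since the added relations change neither the monoid nor its congruence, the same $\lambda$ keeps working. To prove the claim I would use the standard fact from the theory of word \redr\ (see~\cite{dehornoy03:_compl}) that a complete \redr\ of a word to the form $uv^{-1}$ records a chain of monoid equivalences. Concretely, each elementary step $p^{-1}q\revelem q'p'^{-1}$ is licensed by a relation $pq'=qp'$, hence yields $pq'\eqv qp'$; chaining these equivalences along the \redr\ of $s^{-1}rr^{-1}t$ to $uv^{-1}$ produces $su\eqv tv$. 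For instance, writing $s^{-1}r\rev ab^{-1}$, $r^{-1}t\rev cd^{-1}$ and the residual $b^{-1}c\rev c'b'^{-1}$, so that $u=ac'$ and $v=db'$, one reads off $sa\eqv rb$, $rc\eqv td$ and $bc'\eqv cb'$, whence $su=sac'\eqv rbc'\eqv rcb'\eqv tdb'=tv$.

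The main obstacle is this last claim, $su\eqv tv$: it is where the structural content of \redr\ is used, namely that reversing a negative--positive word to positive--negative form certifies an equivalence of positive words rather than a mere equality in the enveloping group. Once that is in hand, the proof is assembled purely by bookkeeping: homogeneity is preserved inductively by the previous paragraph, so $\Rp'$ is homogeneous; the termination condition gives exactly the positive answer demanded at Step~2 of Algorithm~\ref{algo:word_rev_completeness_criterion}; and Proposition~\ref{prop:word_rev_completeness_criterion} then yields that $\Rp'$ is R-complete.
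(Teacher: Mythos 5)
The paper offers no internal proof of this proposition: it is imported directly from \cite{dehornoy03:_compl} (\S~5), so there is nothing in the paper itself to compare against, and your reconstruction is, in substance, the argument of the cited source. The skeleton is right: when the algorithm halts, the last pass of the $\mathtt{REPEAT}$ loop runs with the relation set frozen at $\Rp'$ and adds nothing, which is exactly the positive answer demanded at Step~2 of Algorithm~\ref{algo:word_rev_completeness_criterion} for $\Rp'$; the only genuine work is to keep Proposition~\ref{prop:word_rev_completeness_criterion} applicable, i.e.\ to preserve homogeneity, which you correctly reduce to showing that every inserted relation $su=tv$ already holds in the monoid, so that the congruence $\eqv$ never changes and the original pseudolength $\lambda$ survives. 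Three remarks. First, the fact you lean on---that $s^{-1}rr^{-1}t\rev uv^{-1}$ certifies $su\eqv tv$---is strictly stronger than the paper's Proposition~\ref{prop:ret_yields_monoid_equiv}, which only covers reversings to $\ew$; your ``chaining'' computation handles the case where $s^{-1}r$ and $r^{-1}t$ each reverse in one elementary step, whereas in general one must induct over the whole reversing grid. The general statement is proved in \cite{dehornoy03:_compl}, so invoking it is legitimate (the paper cites even the full proposition from there), but as written your argument for it is an illustration rather than a proof. Second, your identification of ``no relation added during the pass'' with ``every test $(su)^{-1}(tv)\rev\ew$ succeeds'' silently uses that a failed test cannot concern a relation already present; this does hold---if $su=tv$ is a relation, then $(su)^{-1}(tv)=u^{-1}s^{-1}tv\revelem u^{-1}uv^{-1}v\rev\ew$---but it deserves a line, since otherwise ``nothing new was added'' would not force all tests to pass. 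Third, the hypothesis that the algorithm terminates also guarantees that each triple yields only finitely many words $uv^{-1}$ at Step~1 of the final pass, which is what lets that pass certify the criterion in full. With these points made explicit, your proof is complete and coincides with the intended one.
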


Algorithm~\ref{algo:word_rev_completion} and Proposition~\ref{prop:word_rev_completion} are comparable to Algorithm~\ref{algo:grob_completion_monoid} and Proposition~\ref{prop:grob_completion_monoid} in the sense that if the considered \pres\ fails at a completeness test, namely $(su)^{-1}tv\rev \ew$ for word \redr\ and whether it exists compositions not reducing to 0 for \G, then both algorithms add the obstruction to the \pres.

\subsection{An example}
\label{s_sec:word-reversing_example}

In this section, we apply Algorithms~\ref{algo:word_rev_completeness_criterion} and~\ref{algo:word_rev_completion} to the example of Section~\ref{s_sec:grobner_example}. We first have to check whether the \pres\ $( a,b;\, bab=ba^2)$ is R-complete and then, if needed, R-complete it. 

In this case, the length of a word is invariant under $\eqv$ and is therefore a pseudolength; hence the \pres\ is homogeneous and we can apply Algorithm~\ref{algo:word_rev_completion}. There are eight triples of letters to deal with but as no relation of the \pres\ is of the type $a\ldots = b\ldots$, we are left with a single triple, namely $(b,b,b)$.

Before \redr\ $b^{-1}bb^{-1}b$, we introduce the notion of \redr\ graph and refer 
to~\cite{dehornoy99:_gauss_garsid_artin} for more details. A \redr\ graph is a directed and labelled graph that we
associate to a \redr\ sequence $w_0$, $w_1$,... as follows. First, we associate with $w_0$ a path labelled with the
successive letters of $w_0$: we associate to every positive letter $s$ an horizontal right-oriented  edge labelled $s$, and to
every negative letter $s^{-1}$ a vertical down-oriented edge labelled $s$. Then we successively represent the words $w_1$,
$w_2$,... as follows: if $w_{i+1}$ is obtained from $w_i$ by replacing $u^{-1}v$ with $v'u'{}^{-1}$ (such that $uv'=vu'$ is a
relation of the considered \pres), then the involved factor $u^{-1}v$ is associated with a diverging pair of edges in a path
labelled $w_i$ and we complete the graph by closing the open pattern $u^{-1}v$ using right-oriented edges labelled $v'$
and down-oriented edges labelled $u'$, see Fig.~\ref{fig:rev_Uv}.

\begin{figure}[htb]
\begin{picture}(130,100)(70,0)
\put(0,2){\includegraphics{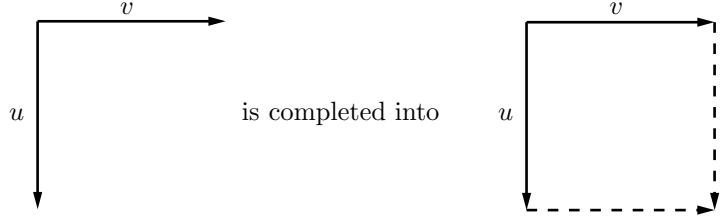}}
\put(-9,38){$u$}
\put(176,38){$u$}
\put(79,38){is completed into}
\put(33,78){$v$}
\put(218,78){$v$}
\end{picture}
\caption{Reversing of $u^{-1}v$ into $v'u'{}^{-1}$.}
\label{fig:rev_Uv}
\end{figure}

The case of the empty word $\ew$, which appears when a factor $u^{-1}u$ is deleted or some relation $uv'=v$ is used, is treated similarly: we introduce $\ew$-labelled edges and use them according to the conventions $\ew^{-1}u\rev u\ew^{-1}$, $u^{-1}\ew \rev \ew u^{-1}$, and $\ew^{-1}\ew \rev \ew\ew^{-1}$.

 The word that is being reversed  along the \redr\ sequence appears on the graph as the top left border: concatenate the
labels of the top left border (with the convention that an arrow crossed backwards contributes  with an exponent $-1$) to
get the word. See Fig.~\ref{fig:rev_bbb} for a simple example of \redr\ graph.

%\begin{figure}[htbp]
%  \centering
% \input{./rev_bbb.pstex_t}  
%  \caption{Case of the presentation $( a,b;\, bab=ba^2)$, \redr\ of $b^{-1}bb^{-1}b$. Because the relation $bab=ba^2$ is of the type $b\ldots=b\ldots$,  the \redr\ of $b^{-1}bb^{-1}b$ is not deterministic: at each \redr\ step of the type $b^{-1}b$, one can either delete the subword $b^{-1}b$ or replace it with $aba^{-2}$. Among the possible {\redr}s, we only consider those of the form $b^{-1}bb^{-1}b\rev uv^{-1}$ in order to apply Proposition~\ref{prop:word_rev_completion}.}
%  \label{fig:rev_bbb}
%\end{figure}

\def\Grid{%
%\linethickness{0.02pt}
%\multiput(0,0)(1,0){400}{\line(0,1){140}}
%\multiput(0,0)(0,1){141}{\line(1,0){400}}
\linethickness{0.3pt}
\multiput(0,0)(10,0){40}{\line(0,1){140}}
\multiput(0,0)(0,10){15}{\line(1,0){400}}
\put(-1,-8){ \tiny 0}
\put(8,-8){ \tiny 10}
\put(18,-8){\tiny 20}
\put(28,-8){\tiny 30}
\put(38,-8){\tiny 40}
\put(48,-8){\tiny 50}
\put(58,-8){\tiny 60}
\put(68,-8){\tiny 70}
\put(78,-8){\tiny 80}
\put(88,-8){\tiny 90}
\put(98,-8){\tiny 100}
\put(108,-8){\tiny 110}
\put(118,-8){\tiny 120}
\put(128,-8){\tiny 130}
\put(158,-8){\tiny 160}
\put(178,-8){\tiny 180}
\put(198,-8){\tiny 200}
\put(218,-8){\tiny 220}
\put(238,-8){\tiny 240}
\put(258,-8){\tiny 260}
\put(278,-8){\tiny 280}
\put(298,-8){\tiny 300}
\put(318,-8){\tiny 320}
\put(338,-8){\tiny 340}
\put(358,-8){\tiny 360}
\put(378,-8){\tiny 380}
\put(-4,9){\tiny 10}
\put(-4,19){\tiny 20}
\put(-4,29){\tiny 30}
\put(-4,39){\tiny 40}
\put(-4,49){\tiny 50}
\put(-4,59){\tiny 60}
\put(-4,69){\tiny 70}
\put(-4,79){\tiny 80}
\put(-4,89){\tiny 90}
\put(-4,99){\tiny 100}
\put(-4,109){\tiny 110}
\put(-4,119){\tiny 120}
\put(-4,129){\tiny 130}
\put(-4,139){\tiny 140}
}

\begin{figure}[htb]
\begin{picture}(200,165)(75,0)
\put(0,2){\includegraphics{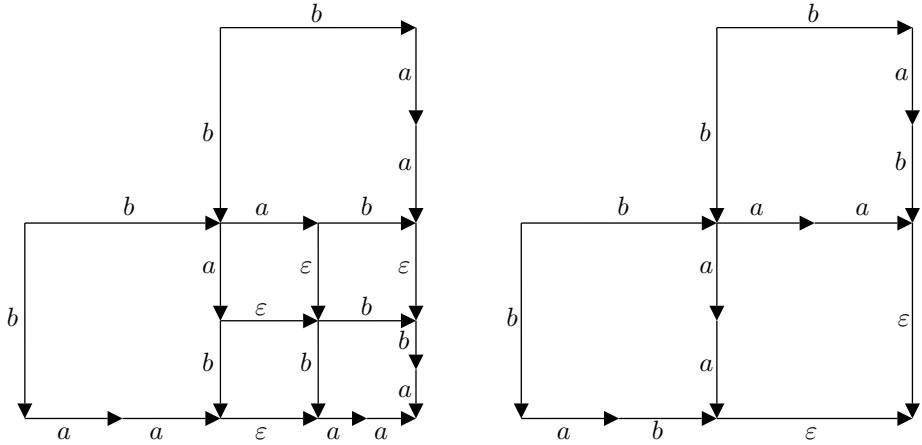}}
%\Grid
\put(15,-3){$a$}
\put(50,-3,){$a$}
\put(117,-3){$a$}
\put(135,-3){$a$}
\put(204,-3){$a$}
\put(144,13){$a$}
\put(70,60){$a$}
\put(90,82){$a$}
\put(144,99){$a$}
\put(144,133){$a$}
\put(258,23){$a$}
\put(258,60){$a$}
\put(277,82){$a$}
\put(317,82){$a$}
\put(332,133){$a$}

\put(240,-3){$b$}
\put(70,23){$b$}
\put(107,23){$b$}
\put(144,31){$b$}
\put(130,44){$b$}
\put(130,82){$b$}
\put(-4,40){$b$}
\put(40,82){$b$}
\put(70,110){$b$}
\put(111,155){$b$}
\put(185,40){$b$}
\put(227,82){$b$}
\put(258,110){$b$}
\put(299,155){$b$}
\put(332,99){$b$}

\put(90,-3){$\ew$}
\put(298,-3){$\ew$}
\put(90,44){$\ew$}
\put(107,60){$\ew$}
\put(144,60){$\ew$}
\put(333,40){$\ew$}

\end{picture}
\caption{Case of the presentation $( a,b;\, bab=ba^2)$, \redr\ of $b^{-1}bb^{-1}b$. Because the relation $bab=ba^2$ is of the type $b\ldots=b\ldots$,  the \redr\ of $b^{-1}bb^{-1}b$ is not deterministic: at each \redr\ step of the type $b^{-1}b$, one can either delete the subword $b^{-1}b$ or replace it with $aba^{-2}$. Among the possible {\redr}s, we only consider those of the form $b^{-1}bb^{-1}b\rev uv^{-1}$ in order to apply Proposition~\ref{prop:word_rev_completion}.}
\label{fig:rev_bbb}
\end{figure}

Now we use \redr\ graphs to study the R-completeness of the \pres\ $( a,b;\, bab=ba^2)$. The word $b^{-1}bb^{-1}b$ reverses  into $a^4{(a^3b)}^{-1}$  and $a^2{(ab)}^{-1}$, and, symmetrically, into  $a^3b{(a^4)}^{-1}$ and $ab{(a^2)}^{-1}$ (Fig.~\ref{fig:rev_bbb}). Hence, according to Algorithm~\ref{algo:word_rev_completeness_criterion}, we have to check ${(ba^4)}^{-1}ba^3b\rev\ew$ and ${(ba^2)}^{-1}bab\rev\ew$ (the symmetric cases are similar). Since $bab=ba^2$ is a relation of the \pres, ${(ba^2)}^{-1}bab\rev\ew$ trivially holds.  

As for ${(ba^4)}^{-1}ba^3b$, Fig.~\ref{fig:rev_AAAABbaaab} shows that it cannot be reversed  to $\ew$. Indeed, there is no relation of the type $a\ldots=b\ldots$ in the \pres, therefore the words ${(a^2b)}^{-1}a^3$ and $a^{-1}b$ cannot be reversed . Hence, by Proposition~\ref{prop:word_rev_completeness_criterion}, the \pres\ is not R-complete, and, applying Algorithm~\ref{algo:word_rev_completion}, we add the relation $ba^4=ba^3b$ to the \pres.

%\begin{figure}[htbp]
%  \centering
%  \input{./rev_AAAABbaaab.pstex_t}  
%  \caption{Case of the presentation $( a,b;\, bab=ba^2)$, \redr\ of ${(ba^4)}^{-1}ba^3b$: despite the equivalence $ba^4\eqv ba^3b$, the word ${(ba^4)}^{-1}ba^3b$ does not reverse to \ew. The relation $ba^4 = ba^3b$ is therefore added to the \pres\ so that this equivalence is now provable in terms of \redr.}
%  \label{fig:rev_AAAABbaaab}
%\end{figure}

\begin{figure}[htb]
\begin{picture}(200,165)(70,0)
\put(0,2){\includegraphics{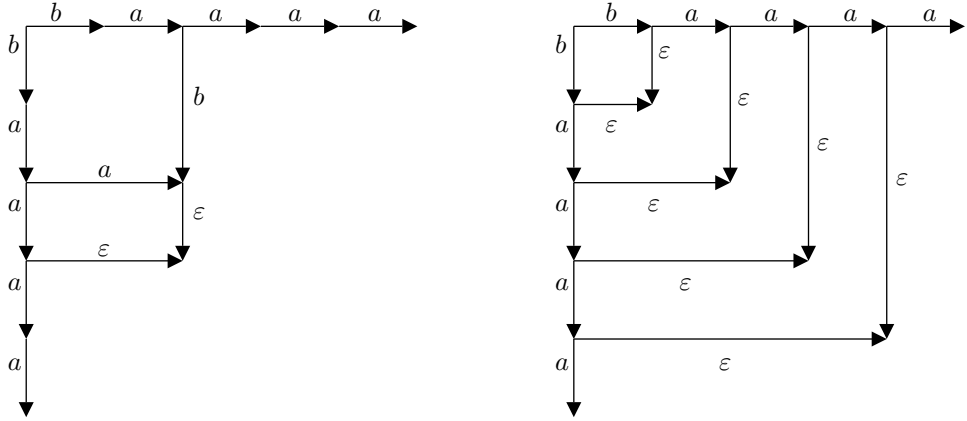}}
%\Grid
\put(42,152){$a$}
\put(72,152){$a$}
\put(102,152){$a$}
\put(132,152){$a$}
\put(-4,110){$a$}
\put(-4,80){$a$}
\put(-4,50){$a$}
\put(-4,20){$a$}
\put(30,93){$a$}
\put(252,152){$a$}
\put(282,152){$a$}
\put(312,152){$a$}
\put(342,152){$a$}
\put(203,110){$a$}
\put(203,80){$a$}
\put(203,50){$a$}
\put(203,20){$a$}

\put(12,152){$b$}
\put(-4,140){$b$}
\put(66,120){$b$}
\put(222,152){$b$}
\put(203,140){$b$}

\put(30,63,){$\ew$}
\put(66,77){$\ew$}
\put(222,110){$\ew$}
\put(242,138){$\ew$}
\put(238,80){$\ew$}
\put(272,120){$\ew$}
\put(250,50){$\ew$}
\put(302,103,){$\ew$}
\put(265,20){$\ew$}
\put(332,90){$\ew$}
  \end{picture}
    \caption{Case of the presentation $( a,b;\, bab=ba^2)$, \redr\ of ${(ba^4)}^{-1}ba^3b$: despite the equivalence $ba^4\eqv ba^3b$, the word ${(ba^4)}^{-1}ba^3b$ does not reverse to \ew. The relation $ba^4 = ba^3b$ is therefore added to the \pres\ so that this equivalence is now provable in terms of \redr.}
  \label{fig:rev_AAAABbaaab}
\end{figure}

We prove now, by induction on $n$, that the \pres\ $(a,b;\, bab= a^2b,\ldots,ba^{2n-1}b=ba^{2n})$ is not R-complete and that Algorithm~\ref{algo:word_rev_completion} leads to adding the relation $ba^{2n+1}b=ba^{2n+2}$. The case $n=1$ was treated above. Assume $n\geq 2$. As illustrated in Fig.~\ref{fig:rev_bbb_to_a2n+2BA2n+1}, $b^{-1}bb^{-1}b$ reverses  to $a^{2n}{(a^{2n-1}b)}^{-1}$ and ${(ba^{2n+2})}^{-1}ba^{2n+1}b$ does not reverse  to $\ew$, so the criterion of Proposition~\ref{prop:word_rev_completeness_criterion} fails, and, according to Algorithm~\ref{algo:word_rev_completion}, we add $ba^{2n+1}b=ba^{2n+2}$.
%
%\begin{figure}[htbp]
%  \begin{minipage}[b]{.495\linewidth}
%    \centering
%    \input{./rev_bbb_to_a2n+2BA2n+1.pstex_t}
%    \caption{\label{fig:rev_bbb_to_a2n+2BA2n+1}Reversing of the word $b^{-1}bb^{-1}b$ to $a^{2n+2}{(a^{2n+1}b)}^{-1}$, using relations added to the \pres\ $( a,b;\, bab=ba^2)$.}
%  \end{minipage}
%  \begin{minipage}[b]{.495\linewidth}
%      \centering
%      \input{./rev_AAA2n-3AABbaa2n-3aab.pstex_t}
%      \caption{\label{fig:rev_AAA2n-3AABbaa2n-3aab}The word ${\left(ba^{2n+2}\right)}^{-1}ba^{2n+1}b$ does not reverse to $\ew$ since no relation of the type $b\ldots = a\ldots$ has been added so far.}
%  \end{minipage} 
%\end{figure}

\begin{figure}[htb]
\begin{picture}(200,165)(70,0)
\put(0,2){\includegraphics{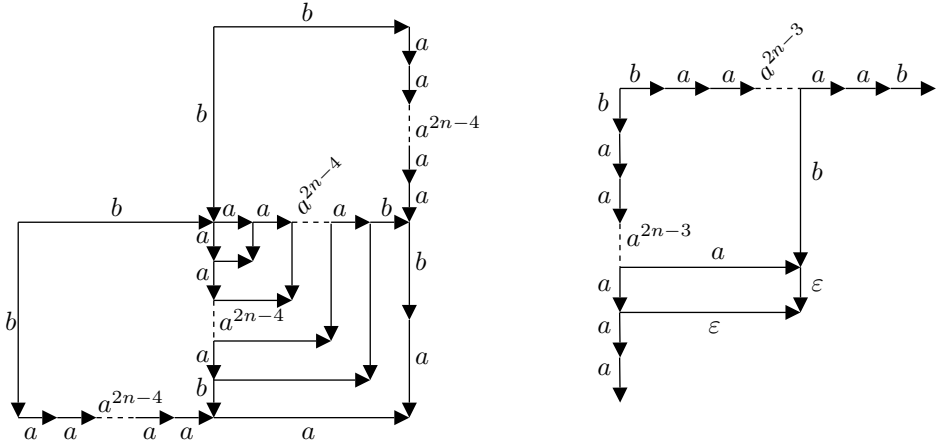}}
%\Grid
\put(5,-3){$a$}
\put(20,-3){$a$}
\put(33,7){$a^{2n-4}$}
\put(50,-3){$a$}
\put(64,-3){$a$}
\put(110,-3){$a$}
\put(70,71){$a$}
\put(70,57){$a$}
\put(79,38){$a^{2n-4}$}
\put(70,26){$a$}
\put(80,81){$a$}
\put(93,81){$a$}
\put(122,81){$a$}
\put(105,81){\rotatebox{45}{$a^{2n-4}$}}
\put(153,144){$a$}
\put(153,130){$a$}
\put(153,111){$a^{2n-4}$}
\put(153,100){$a$}
\put(153,86){$a$}
\put(153,25){$a$}
\put(252,132){$a$}
\put(269,132){$a$}
\put(280,132){\rotatebox{45}{$a^{2n-3}$}}
\put(303,132){$a$}
\put(320,132){$a$}
\put(222,104){$a$}
\put(222,86){$a$}
\put(233,70){$a^{2n-3}$}
\put(222,53){$a$}
\put(222,37){$a$}
\put(222,21){$a$}
\put(265,65){$a$}

\put(70,13){$b$}
\put(-2,38){$b$}
\put(38,81){$b$}
\put(140,81){$b$}
\put(153,61){$b$}
\put(70,117){$b$}
\put(110,155){$b$}
\put(234,132){$b$}
\put(335,132){$b$}
\put(222,121){$b$}
\put(303,95){$b$}

\put(303,53){$\ew$}
\put(264,37){$\ew$}

\end{picture}
\caption{\label{fig:rev_bbb_to_a2n+2BA2n+1}Reversing of the word $b^{-1}bb^{-1}b$ to $a^{2n+2}{(a^{2n+1}b)}^{-1}$, using relations added to the \pres\ $( a,b;\, bab=ba^2)$. The word ${\left(ba^{2n+2}\right)}^{-1}ba^{2n+1}b$ does not reverse to $\ew$ since no relation of the type $b\ldots = a\ldots$ has been added so far.}
\end{figure}

We claim now that the \pres\ $(a,b;\, ba^{2n-1}b=ba^{2n},n\geq 1)$ is R-complete. To prove this, as we have seen above, we reverse  the word $b^{-1}bb^{-1}b$ to all possible words of the form $uv^{-1}$, as required by the completeness criterion (Algorithm~\ref{algo:word_rev_completeness_criterion}). Thus, for any $m<n$, we have the following sequence of {\redr}s:
\[
\begin{array}{ccccl}
b^{-1}bb^{-1}b &\revelem &a^{2n}b^{-1}{(a^{2n-1})}^{-1}b^{-1}b  & \revelem & a^{2n}b^{-1}{(a^{2n-1})}^{-1}a^{2m-1}b{(a^{2m})}^{-1}  \\
                                                         &&     & \revelem & a^{2n}b^{-1}{(a^{2(n-m)})}^{-1}b{(a^{2m})}^{-1}.
\end{array}
\]
From the latter word, unless $m=n$ holds and because there is no relation $a\ldots=b\ldots$, there cannot be a \redr\ of  $b^{-1}bb^{-1}b$  to a word of the type $uv^{-1}$. Suppose we have $m=n$, then for every $p$ we get 
\[
\begin{array}{ccccl}
b^{-1}bb^{-1}b &\revelem& a^{2m}b^{-1}b{(a^{2m})}^{-1} & \revelem & a^{2m}a^{2p}b^{-1}{(a^{2p-1})}^{-1}{(a^{2m})}^{-1} \\
                                                  &&    & \revelem & a^{2(m+p)}b^{-1}{(a^{2(p+m)-1})}^{-1}.
\end{array}
\]                                              
Now, since $ba^{2(p+m)}= ba^{2(p+m)-1}b$ is a relation of the \pres, the criterion for R-completeness is satisfied (Algorithm~\ref{algo:word_rev_completeness_criterion}), which proves that the \pres\ $(a,b;\, ba^{2n-1}b=ba^{2n},n\geq 1)$ is R-complete. So we can conclude with:

\begin{fact}
\label{prop:G-completion_and_R-completion_agree}
Starting with the \pres\ $(a,b;\, bab=ba^2)$, both the G-completion and the R-completion lead to adding the
(infinite)  family of relations $ba^{2n-1}b=ba^{2n}$ with $n \geq 2$.
\end{fact}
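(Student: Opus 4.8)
The plan is to treat the two completions separately, show that each one generates the same infinite family $\{ba^{2n-1}b = ba^{2n} : n \geq 1\}$, and then observe that since the $n=1$ member is the initial relation $bab = ba^2$, the relations \emph{added} by either procedure are precisely those indexed by $n \geq 2$.

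First, for the G-completion I would run Algorithm~\ref{algo:grob_completion_monoid} on $(a,b;\, bab=ba^2)$ with the deglex ordering induced by $b>a$, as carried out in Example~\ref{eg:meme_completion_cas_groebner}. No reduction applies initially, so the first step is the self-composition of $bab$ with itself along the overlapping $b$; after one reduction by the initial relation this produces the $n=2$ member $ba^3b = ba^4$. The heart of this half is an induction: composing the relation $ba^{2n-1}b=ba^{2n}$ (relation $(n)$) with $bab=ba^2$ along the overlap $b$ and reducing yields exactly $ba^{2(n+1)-1}b = ba^{2(n+1)}$. Once the full set $\mathcal{U}_\infty = \{ba^{2n-1}b = ba^{2n} : n\geq 1\}$ is generated, I would invoke Proposition~\ref{prop:minimal_grob_if_no_red_and_no_comp_left}: the set is reduced because the leading words $ba^{2n-1}b$ are pairwise distinct and none is a subword of another, and every composition of $(n)$ with $(m)$ reduces to $0$ through the explicit double reduction displayed in the example. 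Hence $\mathcal{U}_\infty$ is the reduced \G\ basis, and the G-completion adds exactly the relations with $n \geq 2$.

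Second, for the R-completion I would apply the homogeneity machinery. Since $bab=ba^2$ preserves length, length is a pseudolength (Definition~\ref{def:homogeneous}), so Proposition~\ref{prop:word_rev_completeness_criterion} and Algorithm~\ref{algo:word_rev_completion} apply. Because no relation has the shape $a\ldots=b\ldots$, the only triple of letters that can yield an obstruction is $(b,b,b)$, so everything reduces to reversing $b^{-1}bb^{-1}b$ to words of the form $uv^{-1}$. Proceeding by induction (as in Section~\ref{s_sec:word-reversing_example} and Fig.~\ref{fig:rev_bbb_to_a2n+2BA2n+1}), at stage $n$ the presentation $(a,b;\, bab=ba^2,\ldots,ba^{2n-1}b=ba^{2n})$ reverses $b^{-1}bb^{-1}b$ to $a^{2n}{(a^{2n-1}b)}^{-1}$, and the resulting test word $(ba^{2n+2})^{-1}ba^{2n+1}b$ fails to reverse to $\ew$ precisely because there is no relation of type $b\ldots=a\ldots$; this forces the addition of $ba^{2n+1}b=ba^{2n+2}$. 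I would close this half by checking that the full family is R-complete: reversing $b^{-1}bb^{-1}b$ to every reachable $uv^{-1}$, the only completeness test that survives is the case $m=n$, where $ba^{2(m+p)}=ba^{2(m+p)-1}b$ is already a relation, so the criterion of Proposition~\ref{prop:word_rev_completeness_criterion} is met.

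Combining the two halves, both completions terminate on the identical family $\{ba^{2n-1}b = ba^{2n} : n\geq 1\}$, whose $n=1$ member is the starting relation, which gives the asserted set of added relations $n\geq 2$. I expect the main obstacle to lie in the R-completion half: the reversing of $b^{-1}bb^{-1}b$ is nondeterministic, since each factor $b^{-1}b$ may be either cancelled or rewritten through $bab=ba^2$, so one must argue carefully that the words $uv^{-1}$ reachable are exactly those listed and that, apart from the single obstruction driving the induction, all completeness tests succeed. By contrast the G-completion half is comparatively routine, the only delicate point being the verification via Proposition~\ref{prop:minimal_grob_if_no_red_and_no_comp_left} that no composition survives in the infinite basis.
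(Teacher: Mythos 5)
Your proposal is correct and takes essentially the same approach as the paper: the G-completion half is exactly the computation of Example~\ref{eg:meme_completion_cas_groebner} (self-composition of $bab=ba^2$, the induction producing $ba^{2(n+1)-1}b=ba^{2(n+1)}$, and the verification via Proposition~\ref{prop:minimal_grob_if_no_red_and_no_comp_left} that all compositions in $\mathcal{U}_\infty$ reduce to $0$), while the R-completion half follows Section~\ref{s_sec:word-reversing_example} step for step (length as pseudolength, reduction to the single triple $(b,b,b)$, the inductive addition of $ba^{2n+1}b=ba^{2n+2}$ because ${(ba^{2n+2})}^{-1}ba^{2n+1}b$ cannot reverse to $\ew$, and the closing completeness check in which only the $m=n$ branch yields a word of the form $uv^{-1}$ and is discharged by the relation $ba^{2(m+p)}=ba^{2(m+p)-1}b$). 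Even your concluding caution about the nondeterminism of reversing $b^{-1}bb^{-1}b$ mirrors the paper's own remark in the caption of Fig.~\ref{fig:rev_bbb}.
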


Other simple presentations, such as $(a,b;\, a^2b=ba^2, ba^2=a^2b)$ or the Baumslag-Solitar \press\ $(a,b;
ba=a^nb)$, give rise to a similar coincidence phenomenon. So it is natural to raise

\begin{ques}
\label{Q:Main}
Do the G-completion and the R-completion necessarily coincide for every semigroup presentation?---or, at least, on
every presentation in some natural family?
\end{ques}

\section{Divergence results}
\label{sec:complet_diverg}

In this section, we answer Question~\ref{Q:Main} in the negative:

\begin{prop}
\label{prop:main}
There exist finite semigroup presentations for which the G-comp\-letion and the R-completion do not agree. More
precisely, using $\compG$ (resp. $\compR$) for the G-comple\-tion  (resp. the R-completion), there exist finite
semigroup presentations~$\sr$ exhibiting each of the following behaviours:

$\bullet$ type 1: $\compR$ is a proper subset of~$\compG$, with $\compR$ finite and $\compG$ infinite;

$\bullet$ type 1': $\compR$ is a proper subset of~$\compG$, with both $\compR$ and $\compG$ finite;

$\bullet$ type 2: $\compG$ is a proper subset of~$\compR$, with $\compG$ finite and $\compR$ infinite;

$\bullet$ type 3: $\compG$ and $\compR$ are not comparable with respect to inclusion.
\end{prop}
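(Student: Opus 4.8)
The plan is to prove the proposition by explicit construction: for each of the four behaviours I exhibit a concrete finite presentation $\sr$ and compute both its \G\ completion $\compG$ (running Algorithm~\ref{algo:grob_completion_monoid}) and its \redr\ completion $\compR$ (running Algorithm~\ref{algo:word_rev_completion}), then read off the claimed inclusion or incomparability by directly comparing the two resulting sets of relations. The tools are already in place: Proposition~\ref{prop:minimal_grob_if_no_red_and_no_comp_left} certifies that a candidate set is a reduced \G\ basis once one checks that it is reduced and that every composition reduces to $0$, while Proposition~\ref{prop:word_rev_completeness_criterion} certifies R-completeness once one checks, for every triple of letters, that each word $uv^{-1}$ obtained by reversing $s^{-1}rr^{-1}t$ is matched by $(su)^{-1}(tv)\rev\ew$. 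Each candidate will be chosen homogeneous (typically with all relations length-preserving), so that the completeness criterion of Proposition~\ref{prop:word_rev_completeness_criterion} genuinely applies.

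For each presentation the verification follows the pattern of Example~\ref{eg:meme_completion_cas_groebner}. First I run both algorithms through their initial steps by hand, detect the emerging pattern, and formulate the conjectured completion as an explicit family of relations indexed by an integer~$n$. When that family is infinite I prove by induction on~$n$ that the algorithm adds exactly these relations: on the \G\ side, composing the $n$-th relation with a fixed base relation and reducing yields precisely the $(n+1)$-th one; on the \redr\ side, reversing the witness word $s^{-1}rr^{-1}t$ exposes precisely the next obstruction, which Algorithm~\ref{algo:word_rev_completion} then appends. When instead the family is finite I show the process halts, namely that no unreduced composition remains (\G) and that every triple passes the completeness test (\redr). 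In both regimes, once the explicit sets $\compG$ and $\compR$ are in hand the comparison is immediate: for type~1 and type~1' the relations produced by \redr\ form a proper subset of those produced by \G, with the cardinalities as stated; for type~2 the inclusion reverses; and for type~3 each completion contains a relation absent from the other.

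The whole point is to engineer presentations on which the two mechanisms fire at different rates. \G\ completion depends on the fixed admissible (deglex) order and is driven by overlaps of leading words, whereas \redr\ completion is order-free and driven by the reversing of the words $s^{-1}rr^{-1}t$; choosing relations whose leading-word overlaps cascade indefinitely while whose reversing obstructions close off quickly (or vice versa) produces the desired asymmetry. Where possible the examples will be drawn from genuinely meaningful semigroups---of braid and Heisenberg type, and Baumslag--Solitar type---rather than from \emph{ad hoc} constructions, which also yields the explicit \G\ bases promised in the introduction.

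The main obstacle is proving the two \emph{infinite} assertions cleanly. Showing that one completion genuinely never terminates, and that the infinite family produced is \emph{exactly} the conjectured one, requires an invariant guaranteeing both that each stage contributes precisely one new irreducible relation and that no later reduction ever collapses an earlier one---otherwise the claimed set could shrink or the algorithm could stabilise prematurely. Dually, certifying that the \emph{other} completion on the \emph{same} presentation is finite and complete demands the full (finite but delicate) case analysis of all compositions, respectively of all generator triples. Reconciling these two facts on a single presentation is what makes each construction nontrivial; the individual inductions and criterion checks, once the right presentation is fixed, are routine in the style of Sections~\ref{s_sec:grobner_example} and~\ref{s_sec:word-reversing_example}.
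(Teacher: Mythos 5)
Your outline follows the same global strategy as the paper---prove the proposition by exhibiting, for each of the four behaviours, an explicit presentation and computing or bounding both completions---but as it stands it is a plan, not a proof: no presentation is actually exhibited, no completion is actually computed, and the two steps you yourself flag as the ``main obstacle'' are precisely where the paper's real work lies. For type~1 the paper does \emph{not} identify $\compG$ exactly and verify it by induction, as your plan prescribes; for the family $(a,b,c,\ldots;\,bwb=abw)$ of Proposition~\ref{prop:every-bwb=abw-is-type-RG} it explicitly remarks that the reduced \G\ basis is not computable, and instead proves infinitude indirectly, by producing the equivalences $bwa^mbw\eqv abw^2b^m$ and then a case analysis (via ``isolated'' words and the $b$-count $\sharp_b$) showing that no single relation of the basis can reduce infinitely many of the words $bwa^mbw$. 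Your induction-on-$n$ scheme works for simple instances like $(a,b;\,bab=ba^2)$ or $B_3^+$, but the general argument needs this non-constructive device, which your proposal does not supply. Also note that your candidate pool is partly wrong: the paper points out that the Baumslag--Solitar presentations $(a,b;\,ba=a^nb)$ are cases where the two completions \emph{coincide}, so they cannot furnish any counter-example.

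The more serious structural gap is your blanket requirement that every candidate be homogeneous so that Proposition~\ref{prop:word_rev_completeness_criterion} applies. This is incompatible with type~2. The paper's type~2 mechanism (Lemma~\ref{lemma:bw=b_implique_R-completion_infinie}) forces the R-completion to contain all relations $bw^n=b$; no pseudolength $\lambda$ can exist for such a presentation, since $\lambda(bw^n)=\lambda(b)$ would be fixed while $\lambda(bw^n)>\lambda(w^n)\geq n$ grows. Accordingly the paper does not certify R-completeness there through the homogeneity criterion at all: it argues directly from the definition of R-completeness, showing by induction on $n$ that $(bw^n)^{-1}b$ cannot reverse to $\ew$ even after all relations $bw^m=b$, $m<n$, have been added (the only available reversings being $(bw^n)^{-1}b\rev w^{-m}w^p\rev (w^{m-p})^{-1}$). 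Restricting to homogeneous presentations would make your type~2 search fail, or at least require an argument your plan does not contain; similarly, the paper's type~1' example $(a,b;\,(ab)^na=b^p)$ and the Heisenberg type~3 example are handled by citing R-completeness results outside the homogeneity criterion, and the type~3 analysis is supplemented by a direct-product construction (Lemma~\ref{lem:direct_product-pres}) that your proposal does not anticipate. In short: right architecture, but the decisive constructions and the two infinitude arguments are missing, and the homogeneity hypothesis as stated would block the known route to type~2.
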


We shall now successively construct examples displaying the various above-mentioned behaviours.

\subsection{Type 1 counter-examples}
\label{s_sec:type_1_counter-examples}

It is relatively easy to find \RG\ \cexs, and we shall exhibit various families.% In particular, we can find such \cexs\ \sr\ for which not only is the inclusion $\Rp^R\subset \Rp^G$ proper, but even $\Rp^R$ is finite while $\Rp^G$ is infinite.

\begin{prop}
  \label{prop:every-bwb=abw-is-type-RG}
Every presentation 
\begin{equation}
  \label{eq:pres_bwb=abw}
(a,b,c,\ldots;\, bwb=abw), \,\,w\in {\{a,b,c,\ldots\}}^\ast  
\end{equation}
together with any homogeneous lexicographic order with $a$ minimal is a \RG\ \cex.
\end{prop}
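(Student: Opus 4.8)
The plan is to handle the two completions separately and then compare them. Write $R$ for the defining relation $bwb = abw$. Since the order is homogeneous and $a$ is minimal, the two sides have the common length $|w|+2$, and $bwb$ begins with $b$ while $abw$ begins with $a$; hence $bwb > abw$, so the oriented relation is $(bwb,abw)$ with leading word $bwb$. This common length will also serve later as a pseudolength, so that \sr\ is homogeneous and the \redr\ criteria apply.

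For the G-completion, I would first run Algorithm~\ref{algo:grob_completion_monoid} a few steps to uncover the pattern. The leading word $bwb$ begins and ends with $b$, so its only self-overlap is along the shared boundary letter $b$ (the word $bwbwb$); the associated critical pair reduces to $bwabw$ and to $abw^2b$, and as these are distinct a first new relation $bwabw = abw^2b$ is produced. Composing this relation with $R$ along the common factor $bw$ (a suffix of the new leading word, a prefix of $bwb$) yields $bwa^2bw = abw^2b^2$, and, iterating, I would prove by induction that the G-completion successively adds the infinite family
\[
R_n:\quad bwa^n bw = abw^2 b^n, \qquad n\ge 1,
\]
each $R_{n+1}$ arising as the composition of $R_n$ with $R$ along $bw$. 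I would then invoke Proposition~\ref{prop:minimal_grob_if_no_red_and_no_comp_left} to confirm that $\{R\}\cup\{R_n:n\ge1\}$, after reducing the right-hand sides (which may shrink for particular $w$), is the reduced \G\ basis, by checking that every composition among these relations reduces to $0$: I have verified that the $R$--$R_n$ composition along $b$ and the $R_n$--$R$ composition along $bw$ both collapse, and the general case is of the same shape.

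The heart of the argument, and the step I expect to be the main obstacle, is showing that $\compG$ is genuinely infinite for \emph{every} $w$. I would do this by proving that the leading words $bwa^n bw$ form an infinite antichain for the subword order and that, for all $n$ beyond a ($w$-dependent) threshold, $bwa^n bw$ is \G-reduced, i.e.\ no shorter leading word divides it; each $bwa^nbw$ is moreover reducible, being the larger side of the equivalence $bwa^nbw \eqv abw^2b^n$ guaranteed by the Fact that compositions of elements of \eqv\ stay in \eqv. Since distinct antichain elements force distinct elements of the \emph{unique} reduced \G\ basis, infinitely many of them make $\compG$ infinite. The delicate point is uniformity in $w$: for special $w$ (for instance $w=b$, where $bwb=b^3$) the first members of the family degenerate or become reducible by sporadic relations such as $bab^2$, so one must argue that once the central block $a^n$ is long enough it cannot sit inside any shorter leading word, the flanking factors being only $bw$ on each side. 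Controlling the full set of leading words well enough to rule this out is the technically demanding part.

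For the \redr-completion, I would apply Algorithm~\ref{algo:word_rev_completeness_criterion} and Proposition~\ref{prop:word_rev_completeness_criterion}: the only reversing rules coming from $R$ are $b^{-1}a \revelem (wb)(bw)^{-1}$ and its mirror $a^{-1}b \revelem (bw)(wb)^{-1}$, every other $s^{-1}t$ either deleting (when $s=t$) or being terminal. A finite cube computation—reverse $s^{-1}rr^{-1}t$ to the forms $uv^{-1}$ and verify $(su)^{-1}(tv)\rev\ew$—then shows that the completion terminates, so $\compR$ is finite; in the prototypical case $w=a$ one recovers the braid relation $bab=aba$, which is R-complete, and the same finiteness is to be established in general (this cube check, uniform in a possibly long $w$ involving further generators, is the finicky counterpart of the obstacle above). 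Finally, for the comparison, the seed relation $bwb=abw$ survives as the minimal element of the reduced \G\ basis, since its leading word $bwb$ is not divisible by any other and $abw$ is \G-reduced; hence $bwb=abw\in\compG$, and any relation that the \redr-completion might add is checked to lie in $\compG$ as well, giving $\compR\subseteq\compG$. The inclusion is proper because $\compG$ contains the infinitely many $R_n$ whereas $\compR$ is finite, so \sr\ is a \RG\ \cex.
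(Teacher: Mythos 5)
Your derivation of the family $R_n\colon bwa^nbw=abw^2b^n$ by iterated composition with the seed relation is correct and agrees with the paper, but the two steps you defer with ``the general case is of the same shape'' and ``technically demanding'' are exactly where the content of the proposition lies, and the route you sketch for the main one provably fails. Take $w=b$, so the relation is $b^3=ab^2$. Besides the boundary overlap $y=b$ there is the overlap $y=b^2$, which produces your ``sporadic'' relation $bab^2=a^2b^2$, and iterating one finds that
\[
\{b^3=ab^2\}\cup\{ba^nb^2=a^{n+1}b^2;\ n\ge 1\}
\]
is the reduced \G\ basis (all compositions reduce to $0$, so Proposition~\ref{prop:minimal_grob_if_no_red_and_no_comp_left} applies). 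In particular $\{R\}\cup\{R_n\}$ is not the reduced basis: here every $R_n$ collapses entirely under reduction, so none of the words $bwa^nbw=b^2a^nb^2$ is a leading word of the basis, and each is divisible by the strictly shorter leading word $ba^nb^2$ --- for \emph{every} $n$, with no threshold. So your key claim, that $bwa^nbw$ is eventually G-reduced and these words themselves must enter the basis, is false in general. The paper's argument avoids this entirely: it never computes $\compG$, but only uses that each equivalence $bwa^mbw\eqv abw^2b^m$ forces some basis leading word to occur inside $bwa^mbw$, and then shows that no single fixed relation $u=v$ of the basis can serve infinitely many $m$. For $|u|\le m$, a subword $u$ of $bwa^mbw$ has one of four shapes --- $a^\ell$; $bwa^p$; a prefix of $w'a^q$ with $w'$ a proper suffix of $w$; or $a^pbw'$ with $w'$ a prefix of $w$ --- and counting occurrences of $b$ shows that in the first three cases $u$ contains neither $bwb$ nor $abw$, hence is \emph{isolated} (equivalent to no other word) and cannot be the left side of a basis relation; the last shape is excluded using left cancellativity of the monoid, available because the presentation is R-complete with no relation $s\ldots=s\ldots$. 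This counting-and-isolation argument is the missing idea, and your antichain plan cannot be repaired into it.

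On the \redr\ side you likewise leave the decisive point open (``the same finiteness is to be established in general''), whereas the paper's observation is simpler and stronger than the cube computation you propose: the presentation is homogeneous (both sides have length $|w|+2$) and has exactly one relation, of type $a\ldots=b\ldots$, so the completeness criterion (Proposition~\ref{prop:word_rev_completeness_criterion}; compare the later citation of \cite[Prop.~6.4]{dehornoy02:_group_garsid}) gives that it is \emph{already} R-complete. Hence $\compR=\Rp$, nothing is ever added, and $\compR\subsetneq\compG$ follows at once since the seed relation, being the unique shortest non-isolated relation, survives in the basis; your step ``any relation the \redr-completion might add is checked to lie in $\compG$'' then becomes vacuous. (A minor point: by Definition~\ref{defi:word_reversing} every prefix splitting $uv'=vu'$ of $bwb=abw$ yields a reversing rule, not only the letter-level $b^{-1}a\revelem wb\,(bw)^{-1}$ and its mirror.) In summary: correct skeleton and correct generation of the $R_n$, but both steps you flag as obstacles are the actual proof, one is left unproved and the other rests on a pattern that breaks already at $w=b$.
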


\begin{proof}
Let $\Pi_w$ be the \pres\ of~(\ref{eq:pres_bwb=abw}). Each \pres\ $\Pi_w$ is homogeneous (the length is a pseudolength) and has exactly one relation, which is of the type $a\ldots=b\ldots$; by Proposition~\ref{prop:word_rev_completeness_criterion}, the \pres\ $\Pi_w$ is R-complete.

First, consider the case $w=\ew$. Then the composition of $bb=ab$ with itself iteratively leads to the relations $R_m:\, ba^mb=a^{m+1}b$. Now the composition of $R_m$ with $R_n$ reduces to~$0$. Proposition~\ref{prop:minimal_grob_if_no_red_and_no_comp_left} implies that $\{ba^mb=a^{m+1}b; m\geq 0\}$ is a reduced \G\ basis of $\Pi_\ew$. So in this case, the R-completion of $\Pi_\ew$, which is $\Pi_\ew$, is properly included in the G-completion of $\Pi_\ew$, and $\Pi_\ew$ is a \RG\ \cex.

Assume now $w\ne\ew$. Composing $bwb=abw$ with itself gives $bwabw=abw^2b$, which, composed with $bwb=abw$, gives $bwa^2bw=abw^2b^2$. Iterating this\ie\ composing $bwb=abw$ with the result of each previous composition, produces all relations $bwa^mbw=abw^2b^m$ with $m\geq 1$. 

We want to prove that the G-completion $\mathcal{B}$ of the \pres~(\ref{eq:pres_bwb=abw}) is infinite. We have seen that, for each $m\geq 1$, we have $bwa^mbw\eqv abw^2b^m$. It suffices to show that no relation of $\mathcal{B}$ may reduce infinitely many different words $bwa^mbw$. For a contradiction, assume that $(i)$ $u=v$ is a relation of \Bgb\ with $\ell:=|u|$ and $(ii)$ there exists $A\subsetneq \mathbb{N}$ infinite with $\ell \leq \min A$ such that $u=v$ reduces all words $bwa^mbw$ for $m\in A$.

In the sequel, a word $w$ is called \emph{isolated} if it is $\eqv$-equivalent to no other word. For a word $w$, we shall denote by $\sharp_b(w)$ the number of $b$'s in $w$.
 
If we have $\ell\leq 1+|w|$ then $u$ is too short to include $bwb$ or $abw$, and hence $u$ is isolated, contradicting $(i)$. 

\emph{Case 0}: the word $u$ starts at position at least $2+|w|$ and finishes at position at most $2+|w|$ to the end, hence $u$ has the form $a^\ell$. But
$a^\ell$ does not include neither $bwb$ nor $abw$ and is therefore isolated, which contradicts $(i)$. 

\emph{Case 1}: there is a $q$ such that $u$ is a prefix of $bwa^q$. Because $m>\ell$, the word $u$ has the form $bwa^p$, $p\geq 1$. Then it contains no subword $bwb$ because we have $\sharp_b(bwb) > \sharp_b(bwa^p)$. Similarly, $u$ contains no subword $abw$ because $abw\subseteq bwa^p$ implies $abw\subseteq wa^p$, hence $\sharp_b(bw) \leq \sharp_b(w)$, and therefore $u$ is isolated, which contradicts $(i)$.

\emph{Case 2}: the word $u$ starts at position $i$, with $i\geq 2$; hence there is a $q$ such that $u$ is a prefix of $w'a^q$ for some suffix $w'$ of
$w$. We have $\sharp_b(u) < \sharp_b(bwb)$ and $\sharp_b(u)<\sharp_b(abw)$ and so neither $bwb$ nor $abw$ is a subword of $u$; hence $u$ is
isolated, which contradicts $(i)$.

\emph{Case 3}: the word $u$ finishes at position at most $1+|w|$ to the end. Then we have $u=a^pbw'$ with $p\geq 1$ and $w'$ prefix of $w$; since
$\ell > 1+|w|$, we exclude the case where $u$ is a prefix of $w$. Because of the homogeneous lexicographic ordering, a matching word $v$ has the form
$a^pv'$ and then, by cancellativity (see \cite[Prop.~6.1]{dehornoy03:_compl}), $bw$ reduces to $v'$, which is impossible because $bw$ is of length
$1+|w|$ and therefore too short not to be isolated. This contradicts $(i)$.
\end{proof}

Observe in the previous proof that, although the reduced \G\ basis is not computable, we are able to determine that it is necessarily infinite.

A typical instance of Proposition~\ref{prop:every-bwb=abw-is-type-RG} is the standard \pres\ of the braid monoid $B_3^+$.

\begin{example}
\label{ex:B3}
  The \pres\ $(a,b; bab=aba)$ of the braid monoid $B_3^+$, with deglex order induced by $b>a$, satisfies the hypotheses of Proposition~\ref{prop:every-bwb=abw-is-type-RG} and is therefore a \RG\ \cex. An easy computation \cite[Lemma~4.1]{bokut03:_groeb_shirs} gives the reduced \G\ basis 
\[
\{bab=aba\}\cup\{ba^nba=aba^2b^{n-1}; n\ge 2\},
\]
which is in accordance with Proposition~\ref{prop:every-bwb=abw-is-type-RG}.
\end{example}

We shall now give other \cexs. The \pres\ of the braid monoid $B_3^+$ is the first non trivial case of 2-generator Artin \pres, and we can obtain more \RG\ \cexs\ by considering more general Artin \press. 

\begin{prop}
\label{prop:2-gen-Artin-RG-cex}
  Every 2-\gen\ Artin \pres
\[
(a,b ; \underbrace{baba\ldots}_{\text{length }m}=\underbrace{abab\ldots}_{\text{length }m})
\]
 is a \RG\ \cex\ with respect to any homogeneous lexicographic order.
\end{prop}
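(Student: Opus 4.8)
The plan is to check the three requirements defining a \RG\ \cex: that the R-completion is finite, that the G-completion is infinite, and that the former is properly contained in the latter. Since the Artin relation is invariant under exchanging $a$ and $b$, the two homogeneous lexicographic orders are swapped by this exchange, so I may assume the order is $a<b$; I also restrict to $m\geq3$, the case $m=2$ being the relation $ab=ba$, for which both completions reduce to the single original relation and the statement is vacuous. For $a<b$ the two sides of the relation have equal length $m$ and are compared by their first letter, so the leading word is $\ell=baba\ldots$ and the oriented relation is $\ell\to r$ with $r=abab\ldots$; as $|\ell|=|r|=m$ and $\ell\neq r$, the word $r$ cannot contain $\ell$ as a subword, so $r$ is reduced and the relation $\ell=r$ will persist in any reduced \G\ basis.

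For the R-completion I first note that the \pres\ is homogeneous, its common length $m$ being a pseudolength, so the criterion of Proposition~\ref{prop:word_rev_completeness_criterion} applies. With only two \gens\ there are finitely many triples of letters to test, and the single relation, of the form $a\ldots=b\ldots$, drives every reversing step; the closures $(su)^{-1}(tv)\rev\ew$ that the criterion demands are exactly the consistency (cube) conditions expressing that the dihedral Artin \pres\ is complete for reversing, which hold for every $m$ (this \pres\ is Garside of spherical type; see~\cite{dehornoy99:_gauss_garsid_artin}). The \pres\ being R-complete, Algorithm~\ref{algo:word_rev_completion} adds no relation, so $\compR=\Rp$ is finite.

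For the G-completion I would follow the computation of Example~\ref{ex:B3} and recast the argument of Proposition~\ref{prop:every-bwb=abw-is-type-RG} in terms of an infinite antichain of leading words. Iterating the self-composition of $\ell=r$ across its longest self-overlap produces an infinite family of relations $\ell_n=\ell_n'$, with $|\ell_n|\to\infty$, in which $\ell_n$ is $\ell$-reduced (contains no $\ell$) yet is not minimal in its $\eqv$-class, so $\ell_n$ is genuinely a leading word. The aim is to show that $\{\ell_n\}$ is an antichain for the subword order and that no proper subword of any $\ell_n$ is the larger side of a nontrivial relation, equivalently that each proper subword is either isolated or already deglex-minimal in its $\eqv$-class. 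Granting this, the reduced \G\ basis must contain a relation whose leading word is a subword of each $\ell_n$ (Definition~\ref{def:grob_basis}), and this subword can only be $\ell_n$ itself; since the $\ell_n$ are infinitely many and distinct, the basis is infinite and $\compG$ is infinite.

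The control of the proper subwords is the main obstacle, and the point where general $m$ is harder than the braid case. For $m=3$ (and whenever the growing part of $\ell_n$ is a pure block $a^n$, as in Proposition~\ref{prop:every-bwb=abw-is-type-RG}) a fixed subword recurring for infinitely many $n$ must straddle at most one occurrence of $b$, and a short $\sharp_b$-and-position analysis then shows it contains neither $\ell$ nor $r$, or is a normal form; for $m\geq4$ the occurrences of $b$ in $\ell_n$ recur with bounded gaps, so this counting fails and one must instead analyse the alternating-with-$a$-blocks structure of $\ell_n$ directly to see that every pattern occurring strictly inside it is deglex-minimal. The isolation and minimality steps use cancellativity of the monoid, available because dihedral Artin monoids are cancellative (as in Case~3 of Proposition~\ref{prop:every-bwb=abw-is-type-RG}); alternatively one may extract the reduced \G\ basis explicitly, extending the braid computation of~\cite{bokut03:_groeb_shirs}, and read off its infinitude. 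Finally, $\compR=\Rp\subseteq\compG$ because $\ell=r$ lies in the reduced \G\ basis, and the inclusion is proper since $\compG$ is infinite while $\compR$ is finite; hence the \pres\ is a \RG\ \cex.
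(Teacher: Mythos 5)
Your R-completion step is essentially sound, though heavier than needed: rather than invoking Garside theory, it suffices to cite the fact used later in the paper for the type~1' examples, namely that a 2-generator \pres\ whose sole relation has the form $a\ldots=b\ldots$ is R-complete \cite[Prop.~6.4]{dehornoy02:_group_garsid}; this gives $\compR=\Rp$ at once. The genuine gap is on the \G\ side, which is the heart of the proposition: you never exhibit the infinite family $\ell_n$ explicitly, and the decisive step---that the $\ell_n$ form a subword antichain and that no proper subword of any $\ell_n$ is the leading word of a basis relation---is deferred throughout (``the aim is to show'', ``granting this''), with the concession that your counting argument fails for $m\geq4$ and only a programme (``analyse the alternating structure directly'', or ``extract the basis extending \cite{bokut03:_groeb_shirs}'') in its place. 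That unproven step is exactly where the content of the proposition lies, so the proposal does not establish that $\compG$ is infinite for general $m$.

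The paper avoids this difficulty entirely by splitting on the parity of $m$. For odd $m=2n+1$ the relation reads $(ba)^nb=(ab)^na$, which is precisely $bwb=abw$ with $w=(ab)^{n-1}a$, so Proposition~\ref{prop:every-bwb=abw-is-type-RG} applies verbatim; for even $m=2n$ one composes not the relation with itself iteratively in your sense, but each new relation with the \emph{original} one, obtaining the closed-form set $\{(ba)^n=(ab)^n\}\cup\{ba^p(ab)^n=(ab)^nba^p;\ p\geq1\}$, which is then certified to be the reduced \G\ basis by checking that all compositions reduce to $0$ (Proposition~\ref{prop:minimal_grob_if_no_red_and_no_comp_left})---its infinitude is then manifest and no antichain or subword analysis is needed. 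Note also that your structural diagnosis is off: in both parities the relevant leading words contain a growing pure block $a^p$ ($ba^p(ab)^n$ in the even case, $bwa^pbw$ in the odd case), not $b$'s recurring with bounded gaps, so the $\sharp_b$-counting you reserve for $m=3$ is in fact the right shape for all $m$---you simply did not identify the correct family. Finally, your exclusion of $m=2$ is a fair catch (for $ba=ab$ the only self-overlap is empty, both completions equal $\{ba=ab\}$, and the statement really requires $m\geq3$, a point the paper's even case with $n\geq1$ glosses over), though the conclusion for $m=2$ is that the claim fails there, not that it is vacuous.
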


\begin{proof}
  There are two cases. If the \pres\ has the type $(a,b; {(ba)}^nb={(ab)}^na)$, with $n\geq 1$, then by Proposition~\ref{prop:every-bwb=abw-is-type-RG}, the \pres\ is a \RG\ \cex. 

We may assume that the \pres\ has the form $(a,b; {(ba)}^n={(ab)}^n)$, with $n\geq 1$. Compose ${(ba)}^n={(ab)}^n$ with itself as follows:
\[
\left({(ba)}^n-{(ab)}^n\right)ba-ba\left({(ba)}^n-{(ab)}^n\right) = -{(ab)}^nba+ba{(ab)}^n.
\]
Compose the resulting relation with ${(ba)}^n={(ab)}^n$ to get
\[
\left(ba{(ab)}^n-{(ab)}^nba\right)a-ba^2\left({(ba)}^n-{(ab)}^n\right) = -{(ab)}^nba^2+ba^2{(ab)}^n.
\]
Iterating these compositions yields the family of relations
\[
\left\{{(ba)}^n={(ab)}^n\right\}\cup\left\{ba^p{(ab)}^n={(ab)}^nba^p; p\geq 1\right\}.
\]
By Proposition~\ref{prop:minimal_grob_if_no_red_and_no_comp_left}, it suffices to check that every composition reduces to zero. We compute the composition of $ba^p{(ba)}^n={(ba)}^nba^p$ with $ba^q{(ba)}^n={(ba)}^nba^q$ and leave the other compositions to the reader: 
\begin{align*}
(ba^p{(ab)}^n - & {(ab)}^nba^p)a^q{(ab)}^n-ba^p{(ab)}^{n-1}a (ba^q{(ab)}^n-{(ab)}^nba^q)\hfill\\
&=-{(ab)}^nba^{p+q}{(ab)}^n+ba^p{(ab)}^{n-1}a{(ab)}^nba^q\\
&\red -{(ab)}^n{(ab)}^nba^{p+q} + ba^p{(ab)}^{n-2}aba{(ab)}^nba^q\\
&\red -{(ab)}^{2n}ba^{p+q} + ba^{p+1}{(ab)}^n{(ba)}^{n-1}ba^q\\
&\red -{(ab)}^{2n}ba^{p+q} + {(ab)}^nba^{p+1}{(ba)}^{n-1}ba^q\\
&\red -{(ab)}^{2n}ba^{p+q} + {(ab)}^nba^{p}{(ab)}^{n}a^q\\
&\red -{(ab)}^{2n}ba^{p+q} + {(ab)}^n{(ab)}^{n}ba^pa^q = 0.
%%\hfill\square
\end{align*}
\end{proof}

%\vspace{-0.7 cm}\leavevmode\hfill$\square$%
%\vspace{0.4 cm}

Another infinite family of \RG\ \cexs\ extending Example~\ref{ex:B3} in an other  direction than Proposition~\ref{prop:2-gen-Artin-RG-cex} is the
family of standard \press\ of braid monoids:

\begin{prop}
\label{prop:braid_monoid_RG}
For $n \geq 3$, the Artin presentation
\begin{equation}
\label{pres:artin_pres_braids}
\left(
	\sigma_1,\ldots, \sigma_{n-1} \ \bigg\vert\   
	\begin{array}{ll} 
		\sigma_i\sigma_j\sigma_i=\sigma_j\sigma_i\sigma_j &
\text{for } |j-i|=1\\
\sigma_i\sigma_j=\sigma_j\sigma_i &\text{for } |j-i|\geq
2\\
		\end{array}
\right)
\end{equation}
of the braid monoid~$B_n^+$ is a \RG\ counter-example.
\end{prop}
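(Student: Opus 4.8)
The plan is to verify the three requirements of type~1 for the presentation~(\ref{pres:artin_pres_braids}): that $\compR$ is finite, that $\compG$ is infinite, and that the former is properly contained in the latter. Throughout I fix a homogeneous lexicographic order with $\sigma_1 < \cdots < \sigma_{n-1}$; the length is a pseudolength, so the presentation is homogeneous and the reversing criterion of Proposition~\ref{prop:word_rev_completeness_criterion} applies. First I would dispose of the R-completion. The standard Artin presentation of $B_n^+$ is R-complete: this is the reversing counterpart of the fact that $B_n^+$ is a Garside (Gaussian) monoid in which any two elements admit least common multiples, and it is established in \cite{dehornoy99:_gauss_garsid_artin}. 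Consequently Algorithm~\ref{algo:word_rev_completion} adds nothing and $\compR$ is exactly the finite defining set of braid and commutation relations.

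The core of the argument is to show that $\compG$ is infinite, and the idea is to exploit the copy of $B_3^+$ sitting inside $B_n^+$. I would first record that the \emph{support} of a word---the set of generators occurring in it---is invariant under $\eqv$: every defining relation, whether of braid or of commutation type, has both sides supported on the same pair of generators, so applying a relation never changes the support. In particular the submonoid generated by $\{\sigma_1,\sigma_2\}$ is closed under $\eqv$, the only defining relation it inherits is the single braid relation $\sigma_1\sigma_2\sigma_1=\sigma_2\sigma_1\sigma_2$, and the parabolic embedding theorem for Artin monoids shows that for $p,q\in\{\sigma_1,\sigma_2\}^*$ one has $p\eqv q$ in $B_n^+$ if and only if $p\eqv q$ in $B_3^+$. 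Writing $\mathcal{B}$ for the reduced G-basis produced by $\compG$, I would then consider its restriction $\mathcal{B}'=\{(u,v)\in\mathcal{B}: u\in\{\sigma_1,\sigma_2\}^*\}$; by support invariance the second component $v$ automatically lies in $\{\sigma_1,\sigma_2\}^*$ as well. Using the embedding and the defining property of a G-basis (Definition~\ref{def:grob_basis}) one checks directly that $\mathcal{B}'$ is a G-basis of the presentation $(\sigma_1,\sigma_2;\,\sigma_1\sigma_2\sigma_1=\sigma_2\sigma_1\sigma_2)$ of $B_3^+$: any $\{\sigma_1,\sigma_2\}$-equivalence $p>q$ is reduced in $B_n^+$ by some $(u,v)\in\mathcal{B}$ whose leading word $u$, being a subword of $p$, lies in $\{\sigma_1,\sigma_2\}^*$, whence $(u,v)\in\mathcal{B}'$.

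To conclude I would invoke that the leading words of any G-basis must cover, as subwords, exactly the intrinsic set of non-reduced words, whose minimal elements coincide with the leading words of the \emph{reduced} G-basis; hence any G-basis has at least as many leading words as the reduced one. By Example~\ref{ex:B3} the reduced G-basis of $B_3^+$ is $\{bab=aba\}\cup\{ba^mba=aba^2b^{m-1};\ m\geq2\}$, which is infinite, so $\mathcal{B}'$---and therefore $\mathcal{B}=\compG$---is infinite. Finally, for the inclusion $\compR\subseteq\compG$ I would note that each defining relation is reduced: a length-two commutation leading word cannot be reduced by anything, and a length-three braid leading word $\sigma_{i+1}\sigma_i\sigma_{i+1}$ has only the adjacent factors $\sigma_{i+1}\sigma_i$ and $\sigma_i\sigma_{i+1}$ as length-two subwords, neither of which is a commutation leading word; so the defining relations are minimal and survive in the reduced G-basis. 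Since $\compR$ is finite and $\compG$ infinite, the inclusion is proper, giving a type~1 example.

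The main obstacle I anticipate is the second paragraph: the parabolic-embedding step together with the claim that restricting a reduced G-basis to a closed sub-alphabet again yields a G-basis of the corresponding sub-presentation. R-completeness of the braid presentation, although essential, can be imported wholesale from the Garside theory of \cite{dehornoy99:_gauss_garsid_artin}, so the genuinely new work is the transfer of the infinitude of the $B_3^+$ basis through the embedding.
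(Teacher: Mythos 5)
Your proposal is correct, and it pivots on the same key fact as the paper's proof---words on an adjacent pair $\{\sigma_i,\sigma_{i+1}\}$ can only be transformed by the single braid relation on that pair, so the parabolic copy of $B_3^+$ is $\eqv$-closed and the infinite reduced basis of Example~\ref{ex:B3} becomes available---but you package it differently. The paper argues by contradiction with reducedness: it notes that the completion adds the relations $ba^mba=aba^2b^{m-1}$ (with $b=\max(\sigma_i,\sigma_{i+1})$ and $a=\min(\sigma_i,\sigma_{i+1})$) and shows that no element $u=v$ of the reduced basis outside this family can reduce them, since $u\in\{a,b\}^*$ forces $v\in\{a,b\}^*$ and forces $u=v$ to hold in $B_3^+$, whence the known $B_3^+$ basis reduces $u=v$ to zero, contradicting reducedness. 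You instead prove two reusable structural statements: restricting a G-basis of $B_n^+$ to an $\eqv$-closed subalphabet yields a G-basis of the corresponding sub-presentation, and the leading words of any G-basis contain those of the reduced one, because the latter are exactly the subword-minimal non-minimal words. Both auxiliary lemmas are true and follow in a few lines from Definitions~\ref{def:grob_basis} and~\ref{def:reduced_minimal_grob_basis}---do write those lines out, since you only assert them---and your route buys something real: it avoids having to track what Algorithm~\ref{algo:grob_completion_monoid} actually produces during a non-terminating run, and it makes the inclusion $\compR\subseteq\compG$ explicit, which the paper leaves implicit. Two minor points: the appeal to the parabolic embedding theorem is unnecessary, since your own support-invariance observation already shows that any $\eqv$-derivation between $\{\sigma_1,\sigma_2\}$-words stays inside $\{\sigma_1,\sigma_2\}^*$ and can only use $\sigma_1\sigma_2\sigma_1=\sigma_2\sigma_1\sigma_2$; and you fix the order $\sigma_1<\cdots<\sigma_{n-1}$, whereas the paper's $\max/\min$ device handles an arbitrary deglex order---harmless, since any such order restricts on some adjacent pair to one of the two symmetric cases and the $B_3^+$ computation is invariant under exchanging the two generators, but it deserves a sentence.
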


\begin{proof}
  It is a standard result, deduced from Proposition~\ref{prop:word_rev_completion}, that the monoids $B_n^+$ are R-complete.

Take any $i\leq n-2$; put $b=\max(\sigma_i,\sigma_{i+1})$ and $a=\min(\sigma_i, \sigma_{i+1})$. As in the case of $B_3^+$, the relation $bab=aba$ of $B_n^+$ leads Algorithm~\ref{algo:grob_completion_monoid} to add all the relations $ba^nba = aba^2b^{n-1}$, with $n\geq 2$. It suffices to prove that these relations are not reduced by the relations of the reduced \G\ basis \Bgb. For a contradiction, suppose there is a relation $u=v$ in $\Bgb -\left(\{bab=aba\}\cup\{ba^nba=aba^2b^{n-1}; n\geq 2\}\right)$ reducing at least one relation $ba^nba=aba^2b^{n-1}$. So we have $u\in{\{a,b\}}^\ast$. Now, $u\eqv v$ implies that there exist words $u_0, u_1, \ldots,u_n$ satisfying
\[
u=u_0\eqvelem u_1 \eqvelem \cdots \eqvelem u_{n-1} \eqvelem u_n=v.
\]
But there is a single relation in the \pres\ of $B_n^+$ involving $a$'s and $b$'s, namely $bab=aba$. Therefore, $u_0\eqvelem u_1$ implies $u_1\in{\{a,b\}}^\ast$, and it follows that $v$ is in ${\{a,b\}}^\ast$ and that the relation $u=v$ holds in $B_3^+$; hence the relations $ba^nba=aba^2b^{n-1}$, with $n\ge 2$, and $bab=aba$ reduce $u=v$ to zero, contradicting the fact that \Bgb\ was reduced.
\end{proof}

\begin{example}
\label{ex:B4}
By Proposition~\ref{prop:braid_monoid_RG}, we know that the standard Artin \pres\ of the braid monoid $B_4^+$  
\[
( a,b,c; bab =aba, ca=ac, cbc=bcb)
\]
and order induced by $c>b>a$ is a counter-example of \RG. Actually, a direct computation shows that its G-completion is:
\begin{eqnarray*}
bab&=&aba,\\
cbc&=&bcb,\\
ca&=&ac,\\
  ba^nba &=&aba^2b^{n-1}, n\ge 2,\\
cb^ncb &=&bcb^2c^{n-1}, n\ge 2,\\
cba^nc& =& bcba^n, n\ge 1,\\
cba^nb^pcb&=&bcba^nbc^p, n\ge 2, p\ge 1,\\
cb^{n_1}a^{n_2}b^{n_3}\ldots b^{n_k}cba&=&bcb^2ac^{n_1-1}b^{n_2}c^{n_3}\ldots c^{n_k},
\end{eqnarray*}
with $k\geq 2$, and the $n_i$'s are positive integers satisfying $n_2,n_3,\ldots,n_{k-1}\geq 2$, with the additional constraints: $n_1\geq 2$ if $k=2$ or $k=3$ holds, and $n_k\geq 2$ if $k$ is odd.
\end{example}

\begin{rem}
\label{rem:bokut_braid_monoid}
 Bokut \emph{et al.} \cite[Th.~4.2]{bokut03:_groeb_shirs} give \G\ bases for every braid monoid~$B_n^+$, with $n\geq 3$. The latter coincide with
the ones computed in Examples~\ref{ex:B3} and \ref{ex:B4}. Although almost explicit, these bases are neither reduced nor minimal for the cases $n\geq
5$ and therefore do not allow to conclude that the \pres\ of~(\ref{pres:artin_pres_braids}) is a \RG\ counter-example, contrary to
Proposition~\ref{prop:braid_monoid_RG}.
\end{rem}

So far, we considered type~1 counter-examples. We conclude with what was called a type~1' counter-example, namely one where the R-completion
is a proper subset of the G-completion and both are finite.

\begin{prop}
  For every $n\geq 1$ and $p\geq 1$, the \pres
  \begin{equation}
    \label{eq:ab^na=bb}
(a,b; {(ab)}^na = b^p)    
  \end{equation}
together with the homogeneous lexicographic ordering induced by $b>a$ is a type~1' \cex.
\end{prop}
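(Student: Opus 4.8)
The plan is to identify the reduced \G\ basis and the R\nobreakdash-completion explicitly and to check that the former strictly contains the latter. Write $\Pi_{n,p}$ for the presentation of~(\ref{eq:ab^na=bb}) and abbreviate $L=(ab)^na=a(ba)^n$. First I would record that $\Pi_{n,p}$ is homogeneous: assigning the weights $\lambda(a)=p-n$ and $\lambda(b)=n+1$ gives $\lambda(L)=(n+1)p=\lambda(b^p)$, so $\lambda$ extends to a pseudolength and the criterion of Proposition~\ref{prop:word_rev_completeness_criterion} applies. Next I would fix the leading word: under the deglex order induced by $b>a$ the word $L$ has length $2n+1$, so when it is the longer of the two sides one has $L>b^p$ and the relation is oriented $L\red b^p$. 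Determining which side leads is the pivotal first step, since it is exactly the regime where $L$ is the leading word that yields a finite G\nobreakdash-completion, and this case distinction must be made before anything else.

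For the G\nobreakdash-completion I would run Algorithm~\ref{algo:grob_completion_monoid} from $L\red b^p$. The key structural remark is that $L=a(ba)^n$ is a palindrome whose proper borders are exactly the words $(ab)^ka$ with $0\le k\le n-1$; these give all self\nobreakdash-overlaps of the leading word. The overlap along the longest border $(ab)^{n-1}a$ produces, after orienting, the single new relation $b^{p+1}a\red ab^{p+1}$, and I would check that every remaining self\nobreakdash-overlap, namely the composition $\bigl((ab)^{m}b^{p},\,b^{p}(ba)^{m}\bigr)$ with $m\ge 2$, reduces to~$0$: applying $b^{p+1}a\red ab^{p+1}$ repeatedly one ``bubbles'' the single $a$ leftward through the block of $b$'s, obtaining $b^{p}(ba)^{m}\red^{*}(ab)^{m}b^{p}$, which is the other side of the composition. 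The two cross\nobreakdash-compositions of $L\red b^p$ with $b^{p+1}a\red ab^{p+1}$ are handled the same way, each reduced word landing back on $b^{2p+1}$. Since no reduction and no composition is then left, Proposition~\ref{prop:minimal_grob_if_no_red_and_no_comp_left} certifies that
\[
\compG=\{\,(ab)^na=b^p,\ b^{p+1}a=ab^{p+1}\,\}
\]
is the reduced \G\ basis, finite and with exactly two relations.

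For the R\nobreakdash-completion I would apply Proposition~\ref{prop:word_rev_completeness_criterion}. The single relation $L=b^p$ furnishes the only nontrivial elementary reversings, namely $a^{-1}b\rev (ba)^n(b^{p-1})^{-1}$ and its mirror $b^{-1}a\rev b^{p-1}((ba)^n)^{-1}$. Reversing $s^{-1}rr^{-1}t$ for each of the eight triples of letters and checking $(su)^{-1}(tv)\rev\ew$ for every resulting word $uv^{-1}$, one finds that the cube condition always closes up: each check reduces, via the two rules above together with the deletions $x^{-1}x\rev\ew$, to a word of the form $w^{-1}w$. Hence $\Pi_{n,p}$ is already R\nobreakdash-complete, Algorithm~\ref{algo:word_rev_completion} adds nothing, and $\compR=\{(ab)^na=b^p\}$ is finite with a single relation. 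Combining the two computations gives $\compR\subsetneq\compG$, the strictness being witnessed by $b^{p+1}a=ab^{p+1}\in\compG\setminus\compR$, which is precisely a type~1' \cex.

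The step I expect to be the main obstacle is the termination of the G\nobreakdash-completion, i.e.\ proving that no relation beyond $b^{p+1}a=ab^{p+1}$ ever survives reduction. This requires controlling all overlaps of the palindromic leading word $a(ba)^n$ uniformly in $n$ and verifying that the bubbling reduction $b^{p}(ba)^{m}\red^{*}(ab)^{m}b^{p}$ never recreates an occurrence of $L$; it is also exactly here that the hypothesis forcing $L$ rather than $b^p$ to be the leading word is indispensable, since in the opposite regime the analogous completion fails to terminate.
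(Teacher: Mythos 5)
Your \G-side computation is correct and is essentially the paper's own: the paper merely asserts that ``one checks'', via Proposition~\ref{prop:minimal_grob_if_no_red_and_no_comp_left}, that $\{(ab)^na=b^p,\ b^{p+1}a=ab^{p+1}\}$ is the reduced basis, and your border analysis of $(ab)^na$ together with the bubbling reduction $b^p(ba)^m\red^{\ast}(ab)^mb^p$ supplies exactly that check (one tiny slip: there is only one nontrivial cross-composition, coming from the suffix $a$ of $b^{p+1}a$ against the prefix $a$ of $(ab)^na$; no suffix of $(ab)^na$ is a prefix of $b^{p+1}a$). The genuine gap is on the \redr\ side. Your weights $\lambda(a)=p-n$, $\lambda(b)=n+1$ define a pseudolength only when $p>n$: for $p\le n$ the weight of $a$ is not positive, so $\lambda(au)>\lambda(u)$ fails. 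Worse, for $p=1$ and any $n\ge 1$ no pseudolength exists at all: $b\eqv (ab)^na$, and substituting $b\mapsto (ab)^na$ repeatedly inside $(ab)^na$ produces arbitrarily long words in the $\eqv$-class of $b$, which is incompatible with a map that is invariant under $\eqv$ and strictly increases under prefixing a letter. So the presentation is not homogeneous in that range, Proposition~\ref{prop:word_rev_completeness_criterion} is inapplicable, and your R-completeness argument collapses on parameters the statement covers. The paper avoids this entirely by quoting \cite[Prop.~6.4]{dehornoy02:_group_garsid}: a 2-generator presentation whose sole relation has the form $a\ldots=b\ldots$ is automatically R-complete --- no homogeneity needed, uniform in $n$ and $p$. (Your cube-condition verification is also only asserted, ``one finds that it closes up'', rather than carried out, but that is secondary once the homogeneity hypothesis is gone.)

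The second problem is your case split on the leading word. You argue as if a hypothesis forces $(ab)^na$ to lead, and your closing paragraph explicitly disclaims the opposite regime. But the statement assumes only the deglex order induced by $b>a$, and for $p\ge 2n+1$ the leading word is $b^p$ (longer when $p>2n+1$, and lexicographically greater at equal length $p=2n+1$). In that regime the two-element set is not even reduced, since $b^{p+1}a$ contains $b^p$, and the completion begins producing relations such as $b^j(ab)^na=(ab)^nab^j$ and further ones; so your proof, as written, establishes the proposition only for $p\le 2n$, while the claim quantifies over all $p\ge 1$. You have in fact put your finger on a real imprecision --- the paper's own one-line verification tacitly assumes the same regime --- but appealing to a hypothesis the statement does not contain is neither a proof of the general claim nor an explicit restriction of it: you should either handle the regime $p\ge 2n+1$ or state that the conclusion requires $p\le 2n$.
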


\begin{proof}
A 2-\gen\ \pres\ $(a,b;\Rp)$ whose sole relation has the form $a\ldots = b\ldots$ is R-complete \cite[Prop.~6.4]{dehornoy02:_group_garsid}, and hence the \pres~(\ref{eq:ab^na=bb}) is R-complete. Then, using Proposition~\ref{prop:minimal_grob_if_no_red_and_no_comp_left}, one checks that the set
\[
\Bgb=\{{(ab)}^na=b^p, b^{p+1}a=ab^{p+1}\}
\]
is the reduced \G\ basis of the \pres~(\ref{eq:ab^na=bb}).
\end{proof}

\subsection{Type 2 counter-examples}
\label{s_sec:type_2_counter-examples}

In this section, we give examples of \press\ whose G-completion is properly included in their R-completion. 

\begin{lemma}
\label{lemma:bw=b_implique_R-completion_infinie}
  Assume that $(a,b;\,\Rp)$ is a \pres\ with no relation $a\ldots=a\ldots$ or $b\ldots=b\ldots$. Then, for each nonempty word $w$ on $\{a,b\}$, the R-completion of $(a,b; \Rp, bw=b)$ includes $\{bw^n=b; n\in\mathbb{N}\}$.
\end{lemma}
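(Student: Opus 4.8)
The plan is to show, by induction on $n$, that the relation $bw^n=b$ lies in the R-completion $\compR$ of the presentation $(a,b;\,\Rp,bw=b)$. The base cases $n=0$ (giving the trivial relation $b=b$) and $n=1$ (the relation $bw=b$ itself, which is already present) are immediate, so the content is the inductive step: assuming $bw^n=b$ belongs to $\compR$, I would produce $bw^{n+1}=b$ as a relation that Algorithm~\ref{algo:word_rev_completion} is forced to add. The key structural observation I would exploit is the hypothesis that no relation of $\Rp$ nor the relation $bw=b$ has the form $a\ldots=a\ldots$ or $b\ldots=b\ldots$; combined with the way \redr\ works (Definition~\ref{defi:word_reversing}), this severely constrains which \redr\ steps are possible and prevents unwanted cancellations that could let the relevant completeness test $(su)^{-1}(tv)\rev\ew$ succeed prematurely.

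First I would set up the relevant \redr\ computation dictated by Algorithm~\ref{algo:word_rev_completeness_criterion}: reverse $b^{-1}bb^{-1}b$ (the triple $(b,b,b)$) and track which words of the form $uv^{-1}$ arise. The crucial point is that $bw=b$ is a relation with left-hand side $bw$ sharing the leading letter $b$ with the right-hand side $b$; so reversing a factor $b^{-1}b$ can either delete it or, using $bw=b$, open it into $w(\ew)^{-1}=w$, and more generally, once $bw^n=b$ is available in $\compR$, a factor $b^{-1}b$ can be reversed using $b\,w^n=b$ into $w^n$. I would use the already-established relation $bw^n=b$ together with $bw=b$ to drive the \redr\ of $b^{-1}bb^{-1}b$ toward a word witnessing the equivalence $bw^{n+1}\eqv b$, arranging the two \redr\ steps so that one factor opens via $bw^n=b$ and a second opens via $bw=b$, producing on the relevant border the word $bw^{n+1}$ against $b$.

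Next I would verify that the resulting completeness obstruction $(su)^{-1}(tv)$—which here takes the shape $(bw^{n+1})^{-1}b$ or equivalently $b^{-1}bw^{n+1}$—does \emph{not} reverse to $\ew$. This is where the hypothesis does its real work: to reverse $b^{-1}bw^{n+1}$ to $\ew$ one would first reverse the $b^{-1}b$ pair, but each available way of doing so (deleting it, or opening it via a relation) leaves behind a residual word that still needs to reverse to $\ew$, and because there is no relation with two positive words sharing the same initial letter pattern of the right form, the reversing gets stuck: no relation of type $b\ldots=b\ldots$ or $a\ldots=a\ldots$ is available to cancel the leftover $w^{n+1}$ against the empty complement. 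I would make this precise by showing the \redr\ reaches a word of the form $uv^{-1}$ with $u,v$ nonempty positive words whose initial letters force a relation of a type excluded by hypothesis, so no further \redr\ step applies and $\ew$ is never reached.

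The main obstacle I expect is the bookkeeping of the \redr\ graph for $b^{-1}bb^{-1}b$: one must exhibit a \emph{specific} \redr\ sequence that yields $uv^{-1}$ with $(su)^{-1}(tv)$ encoding exactly $bw^{n+1}$ versus $b$, while simultaneously arguing that the competing obstruction word cannot be reversed to $\ew$. The delicate part is ensuring that the word $w$ being arbitrary (rather than a single letter) does not introduce new relations of the forbidden types during the intermediate \redr\ steps, and that the nonemptiness of $w$ guarantees the residual word is genuinely nonempty so that it cannot collapse. Once the obstruction is shown not to reverse to $\ew$, Proposition~\ref{prop:word_rev_completion} and Algorithm~\ref{algo:word_rev_completion} guarantee that $bw^{n+1}=b$ is added to the presentation, completing the induction and showing $\{bw^n=b;\,n\in\mathbb{N}\}\subseteq\compR$.
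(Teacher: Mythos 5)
Your proposal matches the paper's own proof in both structure and key idea: the paper likewise argues by induction that, even after all relations $bw^m=b$ with $m<n$ have been added, the test word $(bw^n)^{-1}b$ can only reverse to words of the form $w^{-m}w^p$ and then to nonempty negative words $(w^{m-p})^{-1}$, never to \ew---precisely because the hypothesis excludes relations of type $a\ldots=a\ldots$ and $b\ldots=b\ldots$ from \Rp---so Algorithm~\ref{algo:word_rev_completion}, run on the triple $(b,b,b)$ exactly as you describe, is forced to add $bw^n=b$ at each stage. One detail to correct in the write-up: the completeness test concerns $(bw^{n+1})^{-1}b=w^{-(n+1)}b^{-1}b$, which is not ``equivalently'' $b^{-1}bw^{n+1}$ (\redr\ is not symmetric in its two arguments), and its stuck reversings are purely negative nonempty words $(w^{k})^{-1}$, that is, words $uv^{-1}$ with $u=\ew$---terminal simply because no negative--positive junction remains---rather than words with both $u$ and $v$ nonempty whose initial letters block a relation.
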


\begin{proof}
  The relation $bw=b$ implies $bw^n\eqv b$, with $n\geq 1$. We prove by induction on $n$, that ${(bw^n)}^{-1}b$ cannot be reversed  to $\ew$ even if all the relations $bw^m=b$, $m<n$, have been added to the \pres. Since ${(bw^2)}^{-1}b$ reverses  to $w^{-1}$, we can assume $n>2$. By hypothesis, there is no relation $s\ldots=s\ldots$ in \Rp\ and hence, the only {\redr}s of ${(bw^n)}^{-1}b$ are, for every $p$ and $m$ satisfying $p<m<n$, ${(bw^n)}^{-1}b\rev w^{-m}w^p \rev {(w^{m-p})}^{-1}$; this completes the induction. 
\end{proof}

\begin{prop}
\label{prop:bw=b-GR-cex}
  Under hypotheses of Lemma~\ref{lemma:bw=b_implique_R-completion_infinie}, every G-complete \pres\ $(a,b; bw=b)$ is a \cex\ of \GR.
\end{prop}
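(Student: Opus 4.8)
The plan is to check directly the three conditions defining a \GR\ \cex\ (Proposition~\ref{prop:main}): that $\compG$ is finite, that $\compR$ is infinite, and that $\compG$ is contained in $\compR$; properness of the inclusion is then automatic, since a finite set cannot equal an infinite one. The two cardinality statements come essentially for free, one from the standing G-completeness hypothesis and the other from Lemma~\ref{lemma:bw=b_implique_R-completion_infinie}, so the whole argument is short.

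For the R-side, note that here the underlying presentation is $(a,b;\, bw=b)$, so the set $\Rp$ occurring in Lemma~\ref{lemma:bw=b_implique_R-completion_infinie} is empty and the requirement that it contain no relation of the form $a\ldots=a\ldots$ or $b\ldots=b\ldots$ holds vacuously. The lemma therefore applies and shows that $\compR$ contains the infinite family $\{bw^n=b;\, n\in\mathbb{N}\}$; in particular $\compR$ is infinite, and, since the R-completion only ever adds relations, it also contains the original relation $bw=b$.

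For the G-side I would use the hypothesis that $(a,b;\, bw=b)$ is G-complete, which by Definition~\ref{def:grob_basis} means that the one-relation set $\{bw=b\}$ is already a \G\ basis. As $bw$ is longer than $b$, the leading word is $bw$, and this single relation is reduced, since no proper subword of $bw$ is the leading word of another relation. Being a \G\ basis, all its self-compositions reduce to~$0$ (the converse content of Proposition~\ref{prop:minimal_grob_if_no_red_and_no_comp_left}), so Algorithm~\ref{algo:grob_completion_monoid} introduces no surviving relation and halts with $\compG=\{bw=b\}$, a finite set. This last point is where I expect the only real care to be needed: one must argue that G-completeness forces the completion to stabilize at once rather than loop on a self-overlap of $bw$ whose composition merely reduces to~$0$.

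Finally, the inclusion is immediate: $bw=b$ is the instance $n=1$ of the family produced on the R-side, so $\compG=\{bw=b\}\subseteq\{bw^n=b;\,n\in\mathbb{N}\}\subseteq\compR$. Since $\compG$ is finite and $\compR$ is infinite, the inclusion is proper, and $(a,b;\, bw=b)$ is a \GR\ \cex.
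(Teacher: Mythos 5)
Your proof is correct and follows essentially the same route as the paper: its one-line argument likewise combines Lemma~\ref{lemma:bw=b_implique_R-completion_infinie} (giving the infinite family $\{bw^n=b;\, n\in\mathbb{N}\}$ inside $\compR$) with the observation that G-completeness forces $\compG=\{bw=b\}$, whence a proper finite-in-infinite inclusion. Your additional care---noting that $\Rp=\emptyset$ satisfies the lemma's hypothesis vacuously, and that self-compositions of $bw=b$ reduce to~$0$ so the completion stabilizes at $\{bw=b\}$---merely spells out what the paper's proof asserts implicitly, and is sound (reduction of any composition terminates by well-ordering, and G-completeness rules out a nontrivial reduced endpoint).
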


\begin{proof}
  By Lemma~\ref{lemma:bw=b_implique_R-completion_infinie}, the R-completion of $(a,b; bw=b)$ contains the set $\{bw^n=b; n\in\mathbb{N}\}$ which, in turn, contains the G-completion, namely $\{bw=b\}$.
\end{proof}

\begin{example}
  The simpliest instance of Proposition~\ref{prop:bw=b-GR-cex} is the \pres\ $(a,b;ba=b)$ whose \G\ basis consists in the sole relation $ba=b$ and whose R-completion is $\{ba^n=b; n\in\mathbb{N}\}$.
\end{example}

The next result is another application of Lemma~\ref{lemma:bw=b_implique_R-completion_infinie} differing from Proposition~\ref{prop:bw=b-GR-cex}
in that the set $\Rp$ is nonempty.

\begin{prop}
For every $n$, $q$, $p$ satisfying $n+q>p$, the \pres\
\[
(a,b;\,a^nb^q=b^p, ba=b)
\]
with order induced by $b>a$ is a \cex\ of \GR.
\end{prop}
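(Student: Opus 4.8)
The plan is to verify the three facts that make the \pres\ a \GR\ \cex: the R-completion $\compR$ is infinite, the G-completion $\compG$ is finite, and $\compG\subseteq\compR$ (which, $\compG$ being finite and $\compR$ infinite, is then automatically proper). Throughout I assume, as the notation suggests, that $n,p,q\geq1$. For the first fact I apply Lemma~\ref{lemma:bw=b_implique_R-completion_infinie} with $\Rp=\{a^nb^q=b^p\}$ and $w=a$. Its hypothesis is met: the only relation of $\Rp$ is $a^nb^q=b^p$, which starts with $a$ on the left and with $b$ on the right, hence is neither of type $a\ldots=a\ldots$ nor of type $b\ldots=b\ldots$. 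The lemma therefore gives $\{ba^m=b;\ m\in\mathbb{N}\}\subseteq\compR$, so $\compR$ is infinite.

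For the finiteness of $\compG$ my plan is to avoid computing the basis and to bound instead its set of leading words. Observe first that every side of a relation of the \pres\ (namely $a^nb^q$, $b^p$, $ba$, and $b$) contains the letter $b$; hence a pure power $a^i$ contains no relation side, is \emph{isolated}, and so is irreducible. Next, $ba\eqv b$ with $ba>b$, so by the definition of a \G\ basis it must contain a relation whose leading word is a subword of $ba$; since the proper subwords $a$ and $b$ of $ba$ are isolated, that leading word is $ba$ itself, i.e.\ $ba=b$ lies in the reduced basis. Now take any relation $u=v$ of the reduced basis with $u\neq ba$. By reducedness (and Remark~\ref{rem:reduced_implies_not_same_leading_words}) $u$ contains no subword equal to another leading word, in particular no subword $ba$; a word with no subword $ba$ has all its $a$'s before all its $b$'s and, not being a pure power $a^i$, must satisfy $u=a^ib^j$ with $j\geq1$.

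It remains to count such leading words. In a reduced basis no leading word is a proper subword of another, so the leading words form an antichain for the subword order. On the set $\{a^ib^j;\ i\geq0,\ j\geq1\}$ this order is exactly the product order on the exponent pairs $(i,j)$, and by Dickson's lemma $\mathbb{N}\times\mathbb{N}_{\geq1}$ has no infinite antichain. Therefore only finitely many words $a^ib^j$ occur as leading words; together with the single extra leading word $ba$, this shows $\compG$ is finite, i.e.\ that Algorithm~\ref{algo:grob_completion_monoid} terminates on the \pres.

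The remaining and, I expect, hardest point is the inclusion $\compG\subseteq\compR$. Unlike finiteness, this cannot follow from a soft argument, since it is a genuine feature of these \press: for the \RG\ families of Section~\ref{s_sec:type_1_counter-examples} the opposite inclusion holds, so one must actually control the relations produced by the \redr-completion (Algorithm~\ref{algo:word_rev_completion}). The plan is to run that completion explicitly, which is feasible since the base \pres\ has only the two relations $a^nb^q=b^p$ and $ba=b$: I would reverse the words $s^{-1}rr^{-1}t$ over the few relevant triples of letters and check that each leading relation of $\compG$ is among the relations $su=tv$ added at some stage. In particular one recovers the collapse relations among the $b$-powers --- forced, e.g., by $b\,(a^nb^q)=b\,b^p=b^{p+1}$ and $(ba^n)b^q=b\,b^q=b^{q+1}$, whence $b^{p+1}=b^{q+1}$ --- together with the mixed relations $a^ib^j=b^k$ isolated above. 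Since the antichain bound already guarantees that $\compG$ consists of only a handful of relations, matching it against the explicit output of the \redr-completion yields $\compG\subsetneq\compR$ and completes the proof.
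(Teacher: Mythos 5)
Your first two steps are sound, and the second is genuinely different from the paper. For the infinitude of $\compR$ you invoke Lemma~\ref{lemma:bw=b_implique_R-completion_infinie} exactly as the paper does. For finiteness of $\compG$, however, the paper does not argue structurally at all: it simply computes the composition of $a^nb^q=b^p$ with $ba=b$ on the overlap $b$, checks that it reduces to $0$, and concludes via Proposition~\ref{prop:minimal_grob_if_no_red_and_no_comp_left} that the presentation is \emph{already G-complete}, so that $\compG=\Rp$ has two elements. Your alternative --- pinning the possible leading words down to $ba$ together with words $a^ib^j$ ($j\geq 1$), observing they form an antichain for the factor order, and invoking Dickson's lemma --- is a nice soft argument for finiteness of the reduced basis, though you should be aware it silently uses the folklore step that a finite reduced \G\ basis forces Algorithm~\ref{algo:grob_completion_monoid} to terminate, which you assert rather than prove.

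The genuine gap is your third step, and it is fatal as written: the inclusion $\compG\subseteq\compR$, which is exactly what distinguishes \GR\ from mere incomparability, is never established. Your final paragraph is a plan (``I would reverse\ldots and check\ldots''), not an argument: no relation of $\compG$ is ever identified, and Dickson's lemma gives finiteness with no effective bound, so the promised ``matching'' of a ``handful of relations'' against the output of Algorithm~\ref{algo:word_rev_completion} cannot even be set up. The paper avoids this problem entirely: since it asserts $\compG=\Rp$ and the R-completion only ever \emph{adds} relations to $\Rp$, the inclusion is automatic and properness follows from Lemma~\ref{lemma:bw=b_implique_R-completion_infinie}. Ironically, your side remark about the collapse relation $b^{p+1}=b^{q+1}$ shows your caution was warranted: that relation comes from the overlap the paper's proof overlooks, namely the suffix $a$ of $ba$ against the prefix $a$ of $a^nb^q$, whose composition is $b^{p+1}-ba^{n-1}b^q \red b^{p+1}-b^{q+1}$, irreducible by $\Rp$ when $p\neq q$ (both sides are pure $b$-powers). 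So for $p\neq q$ the paper's claim that ``there is only one composition available'' fails and $\Rp$ is not G-complete --- which means the inclusion $\compG\subseteq\compR$ really does demand the explicit verification you deferred (is $b^{p+1}=b^{q+1}$, and whatever else the completion produces, among the relations $su=tv$ added by reversing-completion?). A correct proof must actually carry out that computation; yours does not, so the statement remains unproved at its decisive point.
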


\begin{proof}
  We first compute the G-completion. There is only one composition available:
  \begin{eqnarray*}
(a^nb^q-b^p)a-a^nb^{q-1}(ba-b) & = & -b^pa+a^nb^q \\
& \red & -b^p + a^nb^q  \quad \red \quad  0.
  \end{eqnarray*}
Hence the \pres\ $(a,b;\,a^nb^q=b^p, ba=b)$ is G-complete. Lemma~\ref{lemma:bw=b_implique_R-completion_infinie} yields the result.
\end{proof}

\begin{rem}
  It is natural, as in Section~\ref{s_sec:type_1_counter-examples}, to define a type~2' \pres\ to be a \pres\ \sr\ satisfying  $\compG \subsetneq \compR$ and  $\vert \compR \vert < \infty$. However, contrary to the type~1', we could not find a \pres\ of type~2' so far. 
Most of the difficulty resides in the computation of R-completions. Indeed, during the completing process, the \redr\ operation often becomes, if it was
not already, non-deterministic: if, at some point,  we have two relations $s\ldots=s\ldots$ at our disposal and $s^{-1}s$ is to be reversed, then we can
reverse in two different ways, leading to two different words.
\end{rem}

\subsection{Type~3 counter-examples}
\label{sec:mixed-type-counter-examples}

We conclude with examples where both completions are incomparable with respect to inclusion. First, we observe that
mixing examples of what were called types~1 and~2 immediately leads to examples of \mixed. But, then, we show that
less artificial examples exist, namely the standard Heisenberg presentation.

We denote by $X_1\sqcup X_2$ the disjoint union of two sets $X_1$ and $X_2$ (that is, $X_1\sqcup X_2=X_1\times \{1\} \cup X_2\times \{2\})$.

\begin{defi}
  Let $(\Spalt_1,\Rp_1)$ and $(\Spalt_2,\Rp_2)$ be two \press. The \emph{direct product} $(\Spalt_1,\Rp_1)\times(\Spalt_2,\Rp_2)$ is the \pres\ $(\Spalt_1\sqcup\Spalt_2,\Rp_1\sqcup\Rp_2\sqcup \Rp)$ with $\Rp=\left\{ s_1s_2=s_2s_1; s_1\in\Spalt_1, s_2\in\Spalt_2\right\}$.
\end{defi}

In the sequel, the orderings considered on the direct product of two ordered \press\ will be the deglex order where the letters are ordered as follows: the orders on  $\Spalt_1$ and $\Spalt_2$ are preserved and we put $\max{\Spalt_1}<\min\Spalt_2$.

\begin{lemma}
\label{lem:direct_product-pres}
Let $\mathcal{P}$ be the direct product of the semigroup \press\ $(\Spalt_1,\Rp_1)$ and $(\Spalt_2,\Rp_2)$. Then, using above notation, the reduced \G\ basis of $\mathcal{P}$ is $\widehat{\mathcal{R}_{1}}{}^G\sqcup \widehat{\mathcal{R}_{2}}{}^G\sqcup \Rp$ and its R-completed set of relations is $\widehat{\mathcal{R}_{1}}{}^R\sqcup \widehat{\mathcal{R}_{2}}{}^R\sqcup \Rp$.
\end{lemma}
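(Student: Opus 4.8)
The plan is to prove the two claimed equalities---one for the G-completion, one for the R-completion---largely independently, each time by verifying that the proposed set of relations satisfies the relevant completeness criterion. The key structural observation driving everything is that, because of the ordering convention $\max\Spalt_1<\min\Spalt_2$, every commutation relation $s_1s_2=s_2s_1$ with $s_1\in\Spalt_1$ and $s_2\in\Spalt_2$ is oriented $s_2s_1\to s_1s_2$, i.e.\ it pushes $\Spalt_2$-letters to the right and $\Spalt_1$-letters to the left. Consequently any word on $\Spalt_1\sqcup\Spalt_2$ can be sorted into the form $u_1u_2$ with $u_1\in\Spalt_1^\ast$ and $u_2\in\Spalt_2^\ast$, and the relations of $\Rp_1$ act only on the $\Spalt_1$-block while those of $\Rp_2$ act only on the $\Spalt_2$-block. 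This ``block separation'' is what forces no genuine interaction between the three families of relations.

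\emph{The G-completion.} First I would show that $\widehat{\mathcal{R}_{1}}{}^G\sqcup \widehat{\mathcal{R}_{2}}{}^G\sqcup \Rp$ is reduced: no leading word of one family is a subword of a leading word of another. For $\Rp$ this is immediate since its leading words have length $2$ and mix alphabets, whereas the leading words of $\widehat{\mathcal{R}_{i}}{}^G$ lie in $\Spalt_i^\ast$ and are already reduced within their own completion. By Proposition~\ref{prop:minimal_grob_if_no_red_and_no_comp_left}, it then suffices to check that every composition reduces to $0$. Compositions internal to $\widehat{\mathcal{R}_{1}}{}^G$ or $\widehat{\mathcal{R}_{2}}{}^G$ reduce to $0$ because these are already reduced \G\ bases. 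The new compositions to examine are those involving at least one relation of $\Rp$: a commutation $s_2s_1=s_1s_2$ overlapping another commutation, or overlapping a relation of $\widehat{\mathcal{R}_{i}}{}^G$. In each such case the two leading words overlap in a single letter, and after cancellation the resulting relation rearranges letters into the sorted form $u_1u_2$; using the block separation one checks that it reduces to $0$ via the commutation relations together with the relevant $\widehat{\mathcal{R}_{i}}{}^G$. I would organize this as a short case analysis on which alphabet the overlapping letter belongs to.

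\emph{The R-completion.} Here I would invoke Proposition~\ref{prop:word_rev_completion} (so it suffices to produce an R-complete \pres\ that the completion algorithm reaches, using homogeneity and that the completion is essentially canonical). The cleaner route is to verify directly, via the completeness criterion Proposition~\ref{prop:word_rev_completeness_criterion}, that $\widehat{\mathcal{R}_{1}}{}^R\sqcup \widehat{\mathcal{R}_{2}}{}^R\sqcup \Rp$ is R-complete, and that no \redr\ obstruction forces any additional relation beyond these. The point is that when one reverses a pattern $s^{-1}rr^{-1}t$, the negative and positive letters carry alphabet labels, and reversing a mixed pair $s^{-1}t$ with $s,t$ in different alphabets simply uses a commutation relation and produces no new relation; reversing a pair within $\Spalt_i$ reduces, after discarding the other alphabet, to the completeness check already passed by $(\Spalt_i,\Rp_i)$ whose R-completion is $\widehat{\mathcal{R}_{i}}{}^R$. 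Thus the three families close under the criterion and nothing new is added.

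\emph{The main obstacle.} The delicate point is the R-completion half, specifically controlling the non-determinism of word \redr\ on the direct product. Because $\Rp$ contains the relations $s_2s_1=s_2s_1$ read as $s_1\ldots=s_1\ldots$ and $s_2\ldots=s_2\ldots$ simultaneously with relations of $\Rp_i$ of the same ``same-first-letter'' type, a single negative-positive pair may be reversible in several ways, and I must argue that every reversing path of a word of the form $(su)^{-1}(tv)$ either terminates at $\ew$ (no new relation) or yields exactly a relation already present in one of the three families. The careful bookkeeping---showing that the $\Spalt_1$- and $\Spalt_2$-contributions to any \redr\ sequence never interfere and can be analyzed separately---is where the real work lies; I would make this precise by tracking, along any \redr\ graph, that horizontal and vertical edges labelled by $\Spalt_1$-letters commute past those labelled by $\Spalt_2$-letters, reducing each completeness check on $\mathcal P$ to the corresponding check on one factor.
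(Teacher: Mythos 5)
Your G-half is essentially the paper's own argument: both invoke Proposition~\ref{prop:minimal_grob_if_no_red_and_no_comp_left}, dispose of the compositions internal to $\widehat{\mathcal{R}_{1}}{}^G$, $\widehat{\mathcal{R}_{2}}{}^G$ and $\Rp$ (none exist inside $\Rp$, since the leading words $s_2s_1$ cannot overlap one another, the alphabets being disjoint, and none exist between $\widehat{\mathcal{R}_{1}}{}^G$ and $\widehat{\mathcal{R}_{2}}{}^G$), and then reduce the one-letter overlaps between a commutation and a relation of $\widehat{\mathcal{R}_{i}}{}^G$ to zero by a short case analysis. The R-half is where you genuinely diverge: the paper verifies R-completeness directly from Definition~\ref{def:word_revers_completeness}, by sorting any two equivalent words, $u\eqv u_1u_2$ and $v\eqv v_1v_2$ with $u_i,v_i\in\Spalt_i^*$, and showing that a \redr\ of $u^{-1}v$ to $\ew$ factors into {\redr}s of $u_1^{-1}v_1$ and $u_2^{-1}v_2$ to $\ew$; you instead run the local criterion of Proposition~\ref{prop:word_rev_completeness_criterion} on all letter triples. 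Both routes work. Yours tracks Algorithm~\ref{algo:word_rev_completion} more faithfully: showing that each triple check on the product either passes or reproduces a check on one factor yields not only that the displayed set is R-complete but that the completion procedure adds exactly the $\widehat{\mathcal{R}_{i}}{}^R$-relations and nothing else, a point the paper's proof leaves implicit. The paper's global sorting argument, in exchange, is shorter and sidesteps the step-by-step control of non-determinism that you correctly identify as the labor-intensive part. Two points to patch in your version: (a) Proposition~\ref{prop:word_rev_completeness_criterion} applies only to homogeneous {\press}, so you must exhibit a pseudolength on the product, e.g.\ $\lambda(w)=\lambda_1(\pi_1(w))+\lambda_2(\pi_2(w))$, where $\pi_i$ deletes the letters not in $\Spalt_i$ and $\lambda_i$ is a pseudolength for $(\Spalt_i,\Rp_i)$; one checks it is invariant under all three families of relations. (b) Your claim that the commutation relations of $\Rp$ are ``read as $s_1\ldots=s_1\ldots$ and $s_2\ldots=s_2\ldots$'' is wrong: the two sides of $s_1s_2=s_2s_1$ begin with letters of different alphabets, so $\Rp$ itself contributes no same-first-letter non-determinism; that non-determinism comes only from relations $su=sv$ inside $\widehat{\mathcal{R}_{i}}{}^R$, together with the choice between deleting $s^{-1}s$ and applying such a relation. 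This error actually works in your favor, since it means every mixed-alphabet \redr\ step is forced, which simplifies the bookkeeping you outline.
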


\begin{proof}
  To prove that $\widehat{\mathcal{R}_{1}}{}^G\sqcup \widehat{\mathcal{R}_{2}}{}^G\sqcup \Rp$ is the reduced \G\ basis of $\mathcal{P}$, by Proposition~\ref{prop:minimal_grob_if_no_red_and_no_comp_left}, it suffices to check that all compositions reduce to zero. There are no compositions left neither in $\widehat{\mathcal{R}_{1}}{}^G$ nor in $\widehat{\mathcal{R}_{2}}{}^G$ since both sets are reduced. It is obvious that $\Rp$ contains no composition either. The only possible compositions not reducing to zero must therefore involve relations of two different sets among $\widehat{\mathcal{R}_{1}}{}^G$, $\widehat{\mathcal{R}_{2}}{}^G$ and \Rp. Since $\Spalt_1$ and $\Spalt_2$ have no intersection, there are no possible compositions between $\widehat{\mathcal{R}_{1}}{}^G$ and $\widehat{\mathcal{R}_{2}}{}^G$. Because $s_2>s_1$ holds for each $s_2$ in $\Spalt_2$ and $s_1$ in $\Spalt_1$, the first letter of the leading word of every relation of \Rp\ lies in $\Spalt_2$ and the last letter lies in $\Spalt_1$. We leave the reader check that every composition of relations of $\widehat{\mathcal{R}_{2}}{}^G$ and $\Rp$ reduces to zero, the case involving $\widehat{\mathcal{R}_{1}}{}^G$ being similar.
%Let $u=v$ be a relation of $\Rp_2^G$ and assume $s_2$ is the last letter of $u$. Let $s_2s_1=s_1s_2$ be a relation of $\Rp$. The composition of these two relations is:
%  \begin{eqnarray*}
%    (u-v)s_2-\tilde{u}(s_2s_1-s_1s_2) & =& -vs_1+us_1s_2 \\
%    &\red& -vs_1+s_1us_2\\
%    &\red& -vs_1 + s_1v\\
%    &\red& -s_1v+s_1v\\
%    &=& 0.
%  \end{eqnarray*}

To prove that $(\Spalt_{1}\sqcup\Spalt_{2}; \widehat{\mathcal{R}_{1}}{}^R\sqcup \widehat{\mathcal{R}_{2}}{}^R\sqcup \Rp)$ is R-complete, it suffices to check that every two equivalent words can be proven so by \redr. Now, if $u$ and $v$ are equivalent, then we have $u\eqv u_{1}u_{2}$ and $v\eqv v_{1}v_{2}$, with $u_{1}, v_{1}$ in $\Spalt_{1}^*$ and $u_{2}, v_{2}$ in $\Spalt_{2}^*$, satisfying $u_{1}\eqv v_{1}$ and $u_{2}\eqv v_{2}$. These latter equivalences are provable by \redr\ (since we have all the relations of $\widehat{\mathcal{R}_{1}}{}^R$ and $\widehat{\mathcal{R}_{2}}{}^R$) and one can check that finding a \redr\ of $u^{-1}v$ to $\ew$ amounts to finding {\redr}s of $u_{1}^{-1}v_{1}$ and $u_{2}^{-1}v_{2}$ to $\ew$.
\end{proof}

The next result gives a method to get \mixed\ \cexs\ starting from \cexs\ of \RG\ and \GR. 

\begin{prop}
Assume that $(\Spalt_1,\Rp_1)$ is a \cex\ of \RG, and that
$(\Spalt_2,\Rp_2)$  is a \cex\ of \GR. Then
$(\Spalt_1,\Rp_1)\times(\Spalt_2,\Rp_2)$ is a \cex\ of
\mixed.
\end{prop}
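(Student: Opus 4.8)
The plan is to combine Lemma~\ref{lem:direct_product-pres} with the definitions of the various counter-example types so as to read off the incomparability of $\compG$ and $\compR$ for the direct product. First I would set $\mathcal{P}=(\Spalt_1,\Rp_1)\times(\Spalt_2,\Rp_2)$ and invoke Lemma~\ref{lem:direct_product-pres} to write its two completions explicitly as
\[
\compG=\widehat{\mathcal{R}_1}{}^G\sqcup\widehat{\mathcal{R}_2}{}^G\sqcup\Rp,\qquad
\compR=\widehat{\mathcal{R}_1}{}^R\sqcup\widehat{\mathcal{R}_2}{}^R\sqcup\Rp.
\]
To show these are incomparable it suffices to exhibit one relation in $\compG\setminus\compR$ and one in $\compR\setminus\compG$. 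Because the three pieces of each completion live over disjoint alphabets (the $\Spalt_1$-relations, the $\Spalt_2$-relations, and the mixed commutation relations $\Rp$), the comparison splits componentwise: the $\Rp$-part is identical on both sides, so any difference must come from comparing $\widehat{\mathcal{R}_i}{}^G$ with $\widehat{\mathcal{R}_i}{}^R$ on the same index $i$.

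The key step is then to feed in the two hypotheses. Since $(\Spalt_1,\Rp_1)$ is a \cex\ of \RG, we have, by the definition of \RG\ in Proposition~\ref{prop:main}, a \emph{proper} inclusion $\widehat{\mathcal{R}_1}{}^R\subsetneq\widehat{\mathcal{R}_1}{}^G$; hence there is a relation $\rho_1\in\widehat{\mathcal{R}_1}{}^G\setminus\widehat{\mathcal{R}_1}{}^R$. Dually, since $(\Spalt_2,\Rp_2)$ is a \cex\ of \GR, we have $\widehat{\mathcal{R}_2}{}^G\subsetneq\widehat{\mathcal{R}_2}{}^R$, so there is a relation $\rho_2\in\widehat{\mathcal{R}_2}{}^R\setminus\widehat{\mathcal{R}_2}{}^G$. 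I would then argue that $\rho_1$ belongs to $\compG$ but not to $\compR$: it lies in $\widehat{\mathcal{R}_1}{}^G$, and it cannot appear in $\compR=\widehat{\mathcal{R}_1}{}^R\sqcup\widehat{\mathcal{R}_2}{}^R\sqcup\Rp$ because its letters are all in $\Spalt_1$ (ruling out the $\widehat{\mathcal{R}_2}{}^R$ and $\Rp$ components, the latter being mixed), and within $\widehat{\mathcal{R}_1}{}^R$ it was assumed absent. Symmetrically $\rho_2\in\compR\setminus\compG$. Thus neither $\compG\subseteq\compR$ nor $\compR\subseteq\compG$ holds, which is exactly the defining condition of \mixed.

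The main obstacle is the bookkeeping that justifies reading the disjoint-union decomposition as a genuine partition for the purpose of set membership: I must be sure that a relation over the alphabet $\Spalt_1$ cannot \emph{accidentally} coincide with a relation in $\widehat{\mathcal{R}_2}{}^R$ or in $\Rp$. This is where the disjointness of $\Spalt_1$ and $\Spalt_2$ (and the fact that every relation of $\Rp$ genuinely mixes the two alphabets, having a leading letter in $\Spalt_2$ and a trailing letter in $\Spalt_1$ as noted in the proof of Lemma~\ref{lem:direct_product-pres}) does all the work, so the argument is clean once the decomposition is in place. A minor point to state carefully is that the completions of the factors that Lemma~\ref{lem:direct_product-pres} refers to are exactly the $\widehat{\mathcal{R}_i}{}^G$ and $\widehat{\mathcal{R}_i}{}^R$ appearing in the definitions of the \RG\ and \GR\ counter-examples, so that the properness of the inclusions transfers directly. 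No further computation is needed: the result is essentially a formal consequence of the product formula together with alphabet-disjointness.
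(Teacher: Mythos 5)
Your proof is correct and follows essentially the same route as the paper: invoke Lemma~\ref{lem:direct_product-pres} to decompose both completions of the product, then use the proper inclusions $\widehat{\mathcal{R}_1}{}^R\subsetneq\widehat{\mathcal{R}_1}{}^G$ and $\widehat{\mathcal{R}_2}{}^G\subsetneq\widehat{\mathcal{R}_2}{}^R$ coming from the type~1 and type~2 hypotheses to conclude incomparability. The only difference is that you spell out, via explicit witness relations and alphabet-disjointness, the bookkeeping that the paper's two-line proof leaves implicit in the phrase ``by hypothesis, the sets are not comparable.''
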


\begin{proof}
  By hypothesis, the sets $\widehat{\mathcal{R}_{1}}{}^R\sqcup \widehat{\mathcal{R}_{2}}{}^R$ and $\widehat{\mathcal{R}_{1}}{}^G\sqcup \widehat{\mathcal{R}_{2}}{}^G$ are not comparable. Thus, by Lemma~\ref{lem:direct_product-pres}, the \G- and R-completions of $(\Spalt_1,\Rp_1)\times (\Spalt_2,\Rp_2)$ are not comparable.
\end{proof}

The latter proposition gave a way to build \mixed\ \cexs\ as direct products of type~1 and type~2 \cexs. There are however less trivial \press\ of
\mixed\ not arising as direct products.

\begin{prop}
  \label{prop:heisenberg-mixed-type}
When equipped with the order $c>b>a$, the \pres\
\[
(a,b,c; ab=bac, ac=ca, bc=cb)
\]
of the Heisenberg semigroup is a \cex\ of type \mixed.
\end{prop}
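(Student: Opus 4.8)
The plan is to compute enough of each completion to see that $ab=cba$ lies in $\compR$ but not in $\compG$, and that $\compG$ is infinite while $\compR$ is finite; together these give $\compR\not\subseteq\compG$ and $\compG\not\subseteq\compR$, i.e.\ behaviour \mixed.

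\emph{The G-completion.} With $c>b>a$ the relations are oriented $bac=ab$, $ca=ac$, $cb=bc$. Composing $bac=ab$ with $ca=ac$ over the overlap $c$ produces $baac=aba$, and, inductively, composing $ba^nc=aba^{n-1}$ with $ca=ac$ over $c$ produces $ba^{n+1}c=aba^{n}$; hence the reduced \G\ basis contains the infinite family $\{ba^nc=aba^{n-1}:n\ge1\}$. Each leading word $ba^nc$ is a minimal reducible word --- none of its proper factors is a leading word --- so (by the characterisation behind Proposition~\ref{prop:minimal_grob_if_no_red_and_no_comp_left}) each of these relations genuinely occurs in the reduced \G\ basis and $\compG$ is infinite. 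On the other hand $cba$ has the factor $cb$, which is reducible, so $cba$ is not reduced; since $cba>ab$, the pair $\{ab,cba\}$ cannot be a relation of the reduced \G\ basis, that is, $ab=cba\notin\compG$.

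\emph{The R-completion.} I first check homogeneity (Definition~\ref{def:homogeneous}): a routine computation identifies $\monoid$ with $\mathbb{N}^3$ carrying the product $(i,j,k)(i',j',k')=(i+i',j+j',k+k'+ij')$, and on this monoid $\lambda(i,j,k)=i+j+k$ is $\eqv$-invariant and satisfies $\lambda(s\cdot m)>\lambda(m)$ for $s\in\{a,b,c\}$, so it is a pseudolength. I then apply the criterion of Proposition~\ref{prop:word_rev_completeness_criterion}, using the single-step reversings $a^{-1}b\rev bc^{-1}a^{-1}$, $b^{-1}a\rev acb^{-1}$, $a^{-1}c\rev ca^{-1}$, $c^{-1}a\rev ac^{-1}$, $b^{-1}c\rev cb^{-1}$, $c^{-1}b\rev bc^{-1}$. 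A scan of the triples $(s,t,r)$ shows that all pass except $(a,c,b)$ and $(c,a,b)$: for these, reversing $s^{-1}rr^{-1}t$ yields a word $uv^{-1}$ for which the check $(su)^{-1}(tv)$ equals, up to inversion, $(ab)^{-1}(cba)$, and this reverses to $cc^{-1}$, not to $\ew$. So Algorithm~\ref{algo:word_rev_completion} adds the relation $ab=cba$, and the key claim is that this single addition suffices: re-running the criterion on $(a,b,c;\,ab=bac,ac=ca,bc=cb,ab=cba)$, every triple now reverses to $\ew$. Thus $\compR=\{ab=bac,ac=ca,bc=cb,ab=cba\}$ is finite and contains $ab=cba$.

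Putting the pieces together proves the proposition: $ab=cba\in\compR\setminus\compG$, so $\compR\not\subseteq\compG$, while $\compG$ is infinite and $\compR$ finite, so $\compG\not\subseteq\compR$; the two completions are therefore incomparable, which is \mixed. The main obstacle is the last step of the R-completion: adding $ab=cba$ makes the reversings of $a^{-1}c$ and $c^{-1}a$ non-deterministic (each acquires a second outcome, $a^{-1}c\rev ba^{-1}b^{-1}$ and $c^{-1}a\rev bab^{-1}$), exactly the phenomenon flagged in the remark following Algorithm~\ref{algo:word_rev_completion}. Re-running the completeness criterion then means following, for every triple, each reversing branch and checking that at least one of them reaches $\ew$. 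Verifying that all these branches close --- rather than spawning an unbounded cascade of new relations --- is the delicate part, and it is precisely what pins $\compR$ down to these four relations.
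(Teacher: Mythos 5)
Your proof is correct and, at bottom, follows the same route as the paper: compute both completions, observe that $\compG$ is infinite while $\compR$ is finite, and exhibit a relation of $\compR$ absent from $\compG$. Three points of comparison are worth recording. Where the paper simply cites Dehornoy's Example~5.4 for the R-completion $(a,b,c;\, ab=bac,\ ac=ca,\ bc=cb,\ cba=ab)$, you re-derive it, and your details check out: the pseudolength $i+j+k$ is genuinely needed (the relation $ab=bac$ is not length-preserving, so plain length is no pseudolength here), the failing triples are the ones you name, and $(ab)^{-1}cba$ indeed reverses to the terminal word $cc^{-1}$, not to $\ew$; the one soft spot is that your claim that re-running the criterion after adding $ab=cba$ succeeds for every triple is asserted rather than verified --- you rightly flag the new non-deterministic branches $a^{-1}c\rev ba^{-1}b^{-1}$ and $c^{-1}a\rev bab^{-1}$ as the delicate part, but a self-contained write-up would have to chase them, which is exactly what the paper outsources to its citation. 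Where the paper lists the full reduced basis (four families) without proof, you prove infinitude from the single family $ba^nc=aba^{n-1}$ together with irreducibility of proper factors; this is sound and arguably more rigorous than the paper's bare listing, though the irreducibility of the factors $ba^k$, $a^kc$, $a^k$ deserves its one-line check (e.g.\ $a^pba^q\eqv ba^{p+q}c^p$ shows $ba^k$ is the unique minimal-length word in its class). Finally, your witness for $\compR\not\subseteq\compG$ is the right one: the paper's text points to $ab=bac$, which in fact occurs in its own listed G-completion as the $n=0$ member of the family $ba^{n+1}c=aba^n$, $n\ge 0$; the relation that genuinely separates the two completions is $ab=cba$, whose leading word $cba$ is reducible via $cb$ and hence cannot head a relation of a reduced basis, exactly as you argue.
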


\begin{proof}
  Following \cite[Ex.~5.4]{dehornoy03:_compl}, the R-completed \pres\ is $(a,b,c; ab=bac, ac=ca, bc=cb, cba=ab)$. Using Algorithm~\ref{algo:grob_completion_monoid}, we find for the G-completion the set
  \begin{eqnarray*}
    \{cb=bc, ca=ac\} &\cup & \{ba^{n+1}c=aba^{n}; n\geq 0\} \\
    & \cup & \{ ba^{2n}b=a^nb^2a^n; n\geq 1\} \cup \{ ba^{2n+1}b=a^nbaba^n; n\geq 1\}.
  \end{eqnarray*}
We first notice that the G-completion is infinite and therefore Heisenberg \pres\ is neither of type1', nor type~2, nor type~2'.
We see that the relation $ab=bac$ of the R-completion is not in the G-completion and thus Heisenberg \pres\ is not a \RG\ \cex. 
\end{proof}

\section{Cancellativity}
\label{sec:cancellativity}
In this section, we compare R-complete and G-complete presentations in terms
of cancellativity of the associated monoids. Here also, the two notions of
complete presentations lead to divergent results: in the case of an
R-complete presentation, left cancellativity can be read from the presentation
directly, while no such result exists for a G-complete presentation.

\subsection{Reading cancellativity off a complete \pres}
\label{s_sec:reading_properties_off_complete_pres}

One of the nice features of an R-complete \pres\ is that one can very easily 
establish the possible left cancellativity property for the associated monoid by
only inspecting the relations. 

\begin{prop}[\protect{\cite[Prop. 6.1]{dehornoy03:_compl}}]
\label{prop:cancellativity_equivalent_su=sv...}
Assume that \sr\ is an R-complete \pres. Then the monoid \monoid\ admits left cancellation if and only if $u^{-1}v\rev \ew$ holds for every relation of the form $su=sv$ in \Rp\ with $s\in\Spalt$.  
\end{prop}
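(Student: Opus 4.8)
The plan is to prove both implications, using throughout the fact that, for an R-complete \pres, the assertion $u^{-1}v\rev\ew$ and the equivalence $u\eqv v$ coincide for positive words $u,v$: one direction is Proposition~\ref{prop:ret_yields_monoid_equiv}, and the other is the very definition of R-completeness (Definition~\ref{def:word_revers_completeness}). The forward implication is then immediate. If \monoid\ is left cancellative and $su=sv$ is a relation of \Rp\ with $s\in\Spalt$, then $su\eqv sv$ gives $u\eqv v$ after cancelling $s$, whence $u^{-1}v\rev\ew$ by R-completeness.

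For the converse I would first reduce left cancellativity to its one-\gen\ form, namely the implication $sw\eqv sw'\Rightarrow w\eqv w'$ for every letter $s\in\Spalt$ and all positive words $w,w'$: a general prefix $s_1\cdots s_k$ is then cancelled one letter at a time by induction on~$k$. Applying the equivalence between $\eqv$ and $\rev\ew$ twice, this one-\gen\ statement translates into the purely combinatorial claim
\[
(sw)^{-1}(sw')\rev\ew \quad\Longrightarrow\quad w^{-1}w'\rev\ew,
\]
that is, $w^{-1}s^{-1}sw'\rev\ew$ implies $w^{-1}w'\rev\ew$.

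To establish this I would argue on the \redr\ graph of $(sw)^{-1}(sw')=w^{-1}s^{-1}sw'$. Reading the initial word, the negative block $w^{-1}s^{-1}$ produces a column of down-edges and the positive block $sw'$ a row of right-edges, so the only reversible junction at the start is the subword $s^{-1}s$ sitting at the inner corner of the initial path; its cell carries the label $s$ on both incident edges, and filling it is exactly a \redr\ of $s^{-1}s$. There are two kinds of moves: deleting $s^{-1}s$, which empties the corner and exposes precisely the \redr\ graph of $w^{-1}w'$; or applying a relation $sx=sy$ of \Rp, which fills the corner with edges labelled $x$ and~$y$. The hypothesis supplies $x^{-1}y\rev\ew$, that is $x\eqv y$, for each such relation; this is the coherence needed to show that, however the corner is filled, the emptiness of the bottom and right borders of the full grid propagates down to the subgrid obtained by deleting the first row and first column. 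Hence the full graph closes to $\ew$ if and only if the graph of $w^{-1}w'$ does, which is the desired implication.

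The main obstacle is exactly this last propagation step. Because the \pres\ may contain relations of the form $s\ldots=s\ldots$, the \redr\ of $s^{-1}s$ is non-deterministic, so I cannot simply choose to delete $s^{-1}s$ and must instead control every branch of the \redr\ graph at the corner. The content of the hypothesis is that all these branches are coherent---each relation $sx=sy$ satisfies $x^{-1}y\rev\ew$---which is what lets the leading letter~$s$ be peeled off consistently. Making this confluence argument precise, for instance via the diamond/coherence property of \redr\ graphs, is the technical heart of the proof; the two reductions above are routine.
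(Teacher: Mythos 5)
Your forward implication and the two preliminary reductions (cancelling a prefix letter by letter, and translating between $\eqv$ and $\rev\ew$ via Proposition~\ref{prop:ret_yields_monoid_equiv} and Definition~\ref{def:word_revers_completeness}) are correct; note, for the record, that the paper gives no proof of this statement but imports it from \cite[Prop.~6.1]{dehornoy03:_compl}, so the comparison must be with that argument. The step you flag as the ``technical heart'' is, however, a genuine gap, and the tool you invoke to fill it does not exist: word \redr\ has no ``diamond/coherence property'' in general---the non-determinism created by relations $s\ldots=s\ldots$, which the paper itself emphasizes in the remark closing Section~\ref{s_sec:type_2_counter-examples}, is exactly the situation at the corner $s^{-1}s$, and no confluence is available before cancellativity has been established. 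Moreover the concrete mechanism you describe fails as stated: when the corner is filled using a relation $sx=sy$, the subgrid obtained by deleting the first row and first column is \emph{not} the \redr\ graph of $w^{-1}w'$; its top and left borders are whatever the first row and column reverse to, and identifying them with $w'$ and $w$ presupposes precisely the cancellation to be proved. Finally, since $w^{-1}s^{-1}sw'\rev\ew$ only asserts that \emph{some} branch of the non-deterministic \redr\ reaches $\ew$, an argument that controls ``every branch'' at the corner is both unnecessary and unavailable.

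What actually closes the gap is not confluence but a splitting of the single successful \redr\ sequence. If its first step deletes $s^{-1}s$, then $w^{-1}w'\rev\ew$ and Proposition~\ref{prop:ret_yields_monoid_equiv} give $w\eqv w'$. If its first step applies a relation $sx=sy$, then $w^{-1}xy^{-1}w'\rev\ew$; because the terminal word is empty and \redr\ acts locally on negative--positive junctions, the \redr\ diagram of this one sequence splits along its two junctions, forcing all residual positive and negative border pieces to be $\ew$ and yielding a positive word $q$ with $w^{-1}x\rev q^{-1}$ and ${(yq)}^{-1}w'\rev\ew$; the mixed-output analogue of Proposition~\ref{prop:ret_yields_monoid_equiv} (if $u^{-1}v\rev v'u'{}^{-1}$ then $uv'\eqv vu'$, proved in \cite{dehornoy03:_compl}) then gives $w\eqv xq$ and $w'\eqv yq$, and your hypothesis gives $x\eqv y$, whence $w\eqv w'$ by transitivity. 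Note that this concludes $w\eqv w'$ directly---your combinatorial target $w^{-1}w'\rev\ew$ then follows from R-completeness if desired---so no graph-to-graph transfer, no induction on the grid, and no coherence of distinct branches is needed; making the splitting lemma explicit, a statement about a single \redr\ sequence, is all that remains, and it is the route taken in the cited proof.
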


In particular, we get that proving that a monoid is not left cancellative amounts 
to finding a relation $su=sv$ in \Rp\ for which $u^{-1}v\rev \ew$ does not
hold\ie\ the \pres\ being R-complete, a relation $su=sv$ for which $u\eqv v$
does not hold. This means that if there is an obstruction to cancellativity, then
it appears in the relations of the \pres, as soon as it is R-complete. 

When we consider G-complete presentations  instead, the criterion of
Proposition~\ref{prop:cancellativity_equivalent_su=sv...} remains necessary,
but it is no longer sufficient.

\begin{prop}
\label{lem:necess_condition_cancell}
 $(i)$  Assume \sr\ is a reduced G-complete \pres.  If \Rp\ contains a relation of
the type $su=sv$ with $u,v$ nonempty words of
$\Spalt^\ast$, then the monoid \monoid\ is not left cancellative. 

$(ii)$ There exists a G-complete presentation \sr\ such that $\Rp$ contains  no
relation of the type $su=sv$ with $u,v$ nonempty words of $\Spalt^\ast$, and
nevertheless the monoid \monoid\ does not admit left cancellation.
\end{prop}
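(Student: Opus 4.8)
The plan is to prove the two parts separately, since they have genuinely different flavours: part $(i)$ is a structural argument about reduced G-complete presentations, while part $(ii)$ requires exhibiting an explicit counter-example.

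For part $(i)$, suppose \Rp\ contains a relation $su=sv$ with $u,v$ nonempty and $u \neq v$ (the case $u=v$ being impossible in a reduced set of relations after deleting trivial relations). Since the presentation is reduced, we have $su \neq sv$ as words, so without loss of generality $su > sv$, meaning $su$ is the leading word of this relation. The key observation is that left cancellation would require $u \eqv v$. I would show this fails by exploiting reducedness. First I would note that $su \eqv sv$ trivially, so if the monoid were left cancellative we would get $u \eqv v$. Now if $u > v$ (say), then by the definition of a \G\ basis applied to the equivalent words $u, v$, there must be a relation in \Rp\ whose leading word is a subword of $u$. But $u$ is a proper suffix of the leading word $su$ of our relation $su=sv$; I would argue that such a relation would allow a reduction of $su=sv$, contradicting reducedness. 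The main subtlety here is handling the interaction of subwords carefully: the leading word reducing $u$ sits inside $u$, hence inside $su$, and so reduces the relation $su=sv$ unless it coincides with $su$ itself, which is impossible by length. Thus $u \eqv v$ cannot hold, and the monoid is not left cancellative.

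For part $(ii)$, the plan is to produce a concrete finite presentation that is G-complete, has no relation of the form $su=sv$ with $s$ a single generator on both sides, yet whose monoid fails left cancellativity. The natural strategy is to encode a cancellation failure $xw \eqv xw'$ with $w \not\eqv w'$ where the obstruction is hidden: the relation witnessing $xw \eqv xw'$ should not itself begin with the same letter on both sides, but should be reachable through the relations. A good candidate is something like $(a,b,c;\, ab=ba, ac=bc)$ or a small variant, where one checks that $ca$ and $cb$ become equal in a way forcing $ac \eqv bc$ while $a \not\eqv b$; I would verify G-completeness by running Algorithm~\ref{algo:grob_completion_monoid} and confirming via Proposition~\ref{prop:minimal_grob_if_no_red_and_no_comp_left} that all compositions reduce to zero and that no relation has the forbidden shape $su=sv$.

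The hard part will be part $(ii)$: finding a presentation that simultaneously (a) is genuinely G-complete (no leftover compositions), (b) scrupulously avoids any relation $su=sv$ sharing a first letter, and (c) really does fail left cancellativity rather than merely appearing to. Verifying (a) requires checking all overlaps and their reductions, which is the routine but error-prone computation; verifying (c) means exhibiting a specific failure $sx \eqv sy$ with $x \not\eqv y$, and proving $x \not\eqv y$ demands a reliable invariant (for instance a pseudolength as in Definition~\ref{def:homogeneous}, or a counting argument such as $\sharp_b$) to certify non-equivalence in the monoid. I would therefore fix the invariant first, design the relations so that the desired cancellation failure is forced while the invariant separates $x$ from $y$, and only then grind through the composition checks to confirm G-completeness.
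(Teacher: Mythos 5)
Your part $(i)$ is correct and is essentially the paper's own argument: from $su>sv$ deduce $u>v$, assume left cancellation to get $u\eqv v$, invoke the definition of a \G\ basis to produce a relation whose leading word is a subword of $u$, hence of $su$, contradicting reducedness; your length remark ruling out that this reducing relation is $su=sv$ itself correctly closes the loop (the paper runs the same argument slightly more redundantly, discussing $v$ as well).

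Part $(ii)$, however, contains a genuine gap: your concrete candidate $(a,b,c;\, ab=ba,\, ac=bc)$ does not fail \emph{left} cancellation. The relation $ac=bc$ is an obstruction to \emph{right} cancellation ($ac\eqv bc$ with $a\not\eqv b$), and your phrase ``forcing $ac \eqv bc$ while $a \not\eqv b$'' shows the two sides have been interchanged. In fact this monoid is left cancellative: for the deglex order induced by $c>b>a$ the rewrite rules are $ba\to ab$ and $bc\to ac$, the G-reduced words are exactly those of $\{a,c\}^*b^*$, and left multiplication by each generator is injective on these normal forms (for instance $b\cdot a^ib^k$ normalizes to $a^ib^{k+1}$ and $b\cdot a^ic\alpha b^k$ to $a^{i+1}c\alpha b^k$), whence left cancellativity for all elements by induction on length. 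The missing idea --- which is exactly what the paper's example $(a,b,c;\, ca=ba,\, cb=ba)$ implements --- is to take two relations with a \emph{common right-hand side} whose left-hand sides begin with the same letter: then $ca\eqv cb$ is a consequence but not a relation, no relation has the forbidden shape $su=sv$, G-completeness is immediate (the leading words $ca$ and $cb$ admit no overlap and no reduction, so Proposition~\ref{prop:minimal_grob_if_no_red_and_no_comp_left} applies vacuously), and $a\not\eqv b$ needs no bespoke pseudolength or counting invariant: $a$ and $b$ are distinct G-reduced words, hence inequivalent by uniqueness of G-reduction. So the heavy machinery you reserve for your step (c) is unnecessary once the relations are designed along these lines, and as written your plan would not have produced a valid example without this correction.
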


\begin{proof}
$(i)$ Suppose the monoid \monoid\ is left cancellative. Thus we have $u\eqv v$ and $u=v$ is not a relation of \Rp, otherwise the \pres\ \sr\ would not be
reduced. We prove that this is not possible.

Since we fixed an order compatible with the concatenation in the monoid, the inequality $su > sv$ implies $u>v$. This latter inequality combined with the equivalence $u\eqv v$ means that $u$ or $v$ can be reduced to its normal form. Hence suppose there is a relation $w=w'$ in \Rp\ with $w$ a subword of $u$. This means that the relation $su=sv$ was not reduced, which contradicts the hypothesis. The same applies to~$v$. Thus there is no relation $w=w'$ with $w$ a subword of $u$ or $v$. Therefore $u$ and $v$ must be reduced, which contradicts $u>v$.

$(ii)$   Take the monoid presented by $(a,b,c;\, ca = ba, cb=ba)$. With the 
homogeneous lexicographic order induced by $c>b>a$, this \pres\ is
G-complete. From $ca=ba$ and $cb=ba$ we get $ca\eqv cb$. Now, $a\eqv b$
does not hold and hence the monoid is not left cancellative.
\end{proof}

Proposition~\ref{lem:necess_condition_cancell} establishes that  for a \pres\ to
give rise to a cancellative monoid, there has to be no relation $s\ldots =
s\ldots$ and conversely, that even without relation of the type
$s\ldots=s\ldots$, there exist G-complete \press\ associated to non
cancellative monoids. In the proof, the relations of the \pres\ $(a,b,c;\, ca = ba,
cb=ba)$ suggest that cancellativity might be linked to the particular
presentations possessing two relations $s u = w$ and $s v= w$ with $u \not=
v$ and  $w$ not starting with an $s$. This is not the case:

\begin{prop}
 There exist a G-complete presentation $\sr$ such that the
mon\-oid~$\monoid$ is not left cancellative yet $\Rp$ contains no pair of
relations $s u = w$, $s v= w$ with $u \not= v$ and  $w$ not starting with~$s$.
\end{prop}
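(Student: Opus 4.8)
The plan is to prove the statement by exhibiting an explicit finite presentation \sr\ and checking three things: that it is G-complete, that the monoid \monoid\ fails left cancellation, and that no two of its relations form the forbidden pair. For the first point I would invoke the criterion of Proposition~\ref{prop:minimal_grob_if_no_red_and_no_comp_left}: it suffices to verify that the chosen set of relations is reduced and that every composition of two of its relations reduces to~$0$, which is a finite computation. The failure of left cancellation will be witnessed by an explicit triple $s,U,V$ with $sU\eqv sV$, obtained through an explicit chain of reductions, while $U\not\eqv V$ will be read off from the fact that $U$ and $V$ have distinct G-reduced forms. The absence of the pair will then follow by merely inspecting the (finitely many) relations.

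The conceptual heart --- and the main obstacle --- is to arrange non-cancellativity while keeping the forbidden pair absent, because these two requirements pull against each other. Indeed, in a reduced G-complete presentation two distinct reduced words are never equivalent (uniqueness of the G-reduced form), so any witness $sU\eqv sV$ forces $sU$ and $sV$ to reduce to a common word. When $U$ is already reduced, every leading word occurring as a subword of $sU$ must use the initial letter~$s$; hence the two reductions are launched by relations of the form $su'=w'$ and $sv'=w''$ that are prefixes of $sU$ and $sV$. If the right-hand sides $w'$ and $w''$ coincided we would produce exactly the forbidden pair. Thus the construction must keep $w'\ne w''$ and defer the genuine collision to later, interior steps of the reduction --- or, more economically, realize the whole collision through a single self-overlapping relation, so that there is literally only one relation and hence no pair at all.

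Carrying out the economical route, I would take the presentation $(a,b;\,ab=a)$ with $b>a$. Its single relation has leading word $ab$, which does not overlap itself nontrivially, so there is no composition to consider; the relation being trivially reduced, Proposition~\ref{prop:minimal_grob_if_no_red_and_no_comp_left} shows that \Rp\ is already its own reduced \G\ basis. Left cancellation fails because $ab\eqv a$ yields $a\cdot(ba)=(ab)a\eqv a\cdot a$, whereas $ba\not\eqv a$ since $ba$ and $a$ are distinct reduced words. Finally, \Rp\ consists of the single relation $ab=a$, whose right-hand side $a$ begins with $s=a$; in particular there is no pair $su=w$, $sv=w$ with $w$ not beginning with~$s$, as required, and this is consistent with Proposition~\ref{lem:necess_condition_cancell}. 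The only genuinely delicate point in such an argument is the rigorous proof that $U\not\eqv V$: since the congruence may be large, this must be settled by comparing G-reduced forms (or by mapping onto a quotient monoid that separates $U$ and $V$) rather than by any direct enumeration. Richer multi-generator examples realizing the deferred-merge mechanism of the previous paragraph are also available, at the cost of having to control a larger --- possibly infinite --- G-completion.
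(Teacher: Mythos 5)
Your example $(a,b;\,ab=a)$ does prove the statement as literally quoted, and the verification is sound: the leading word $ab$ has no proper self-overlap and the single relation is trivially reduced, so Proposition~\ref{prop:minimal_grob_if_no_red_and_no_comp_left} makes $\{ab=a\}$ a reduced \G\ basis; $a\cdot ba\eqv a\cdot a$ while $ba$ and $a$ are distinct G-reduced words, so left cancellation fails; and a one-relation set contains no pair of relations at all. The paper proceeds quite differently: it takes the six-generator presentation $(a,b,c,r,s,t;\,sba=tca,\ cab=bb,\ tbb=rcb,\ sa=rc)$, checks G-completeness by the same criterion, and derives $sab\eqv rcb$ and $sbab\eqv rcb$, whence $s\cdot ab\eqv s\cdot bab$ with $ab$, $bab$ distinct reduced words. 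This is exactly the ``deferred merge'' mechanism you sketch in your second paragraph but then bypass.

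The caveat is that your shortcut arguably trivializes the intended content. The relation $ab=a$ is itself of type $s\ldots=s\ldots$, reading $a\cdot b=a\cdot\ew$; it escapes Proposition~\ref{lem:necess_condition_cancell}$(i)$ only because there $u,v$ are required to be nonempty, and the one-line argument of $(i)$ applies verbatim ($b$ and $\ew$ are distinct reduced words). So in your example the obstruction to cancellativity is still read off a \emph{single} relation, whereas the proposition is introduced precisely to refute the conjecture, prompted by $(a,b,c;\,ca=ba,\,cb=ba)$, that for presentations with no such single-relation obstruction non-cancellativity must come from a pair $su=w$, $sv=w$. An example serving that purpose must also avoid relations whose two sides begin with the same letter --- which the paper's construction does and yours does not. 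Against the bare statement you are correct; against the statement in context, you should upgrade to a multi-relation example realizing your deferred-merge plan, essentially recovering the paper's proof.
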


 \begin{proof}
   Consider the \pres
\[
\sr=(a,b,c,r,s,t;\, sba=tca, cab =bb, tbb=rcb, sa =rc)
\]
and the homogeneous lexicographic order induced by $a<b<c<r<t<s$. By Prop.~\ref{prop:minimal_grob_if_no_red_and_no_comp_left} this \pres\ is G-complete. In the monoid \monoid, we have $sab\eqv rcb$ and $sbab\eqv rcb$ and hence $sbab\eqv sab$. If the monoid is left cancellative, then we must have $bab \eqv ab$. Now $bab$ and $ab$ are both reduced and therefore not equivalent.
 \end{proof}

\subsection{Infinite completions}
\label{s_sec:infinite-completions}
We have considered in Section~\ref{s_sec:type_1_counter-examples} many 
infinite G-completions of \press\ associated to cancellative monoids.
Contrastingly, all above-mentioned examples involving
an infinite R-completion turn out to be associated with monoids that are not left
cancellative, and one could wonder whether this situation necessarily occurs.
Actually, it is not the case: 

\begin{prop}
\label{prop:exists_infinte_completion_left-cancel}
  There exists a finite \pres\ admitting an infinite R-comple\-tion yet the
associated monoid is left and right cancellative.
\end{prop}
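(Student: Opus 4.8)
The plan is to exhibit an explicit finite presentation whose R-completion is infinite but whose associated monoid is cancellative on both sides. Looking at the type~2 machinery, the source of infinite R-completion was Lemma~\ref{lemma:bw=b_implique_R-completion_infinie}, where a relation of the form $bw=b$ forces the R-completion to contain all $bw^n=b$; but such a relation always destroys left cancellativity, so I cannot reuse it directly. Instead I would keep the mechanism that makes R-completion infinite---a relation $s\ldots=s\ldots$ producing non-determinism in the reversing process, so that the word $s^{-1}s$ reverses in several incompatible ways and the completeness test of Algorithm~\ref{algo:word_rev_completeness_criterion} keeps failing---while choosing the right-hand sides so that no element is collapsed. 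A natural candidate is a presentation built on the braid-type relation together with extra generators, for instance a presentation in the spirit of $(a,b;\, bab=ba^2)$ modified so that left and right cancellation survive; the key point is that $bab=ba^2$ already has leading word starting with $b$ on both sides, yet it does not obstruct cancellativity, and its R-completion is the infinite family $ba^{2n-1}b=ba^{2n}$ from Fact~\ref{prop:G-completion_and_R-completion_agree}.

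First I would write down the concrete presentation and verify it is homogeneous, so that Algorithm~\ref{algo:word_rev_completeness_criterion} and Proposition~\ref{prop:word_rev_completeness_criterion} apply; the length being invariant under $\eqv$ suffices. Second, I would run the reversing-completion and show it does not terminate, producing an explicit infinite family of added relations; here I would reuse exactly the graph-based computation already carried out in Section~\ref{s_sec:word-reversing_example}, where $b^{-1}bb^{-1}b$ reverses to infinitely many incompatible words $uv^{-1}$, each forcing a new relation. Third---and this is the genuinely new content---I would prove that the monoid \monoid\ is both left and right cancellative. The cleanest route is to realise the monoid concretely: if the presentation presents a known cancellative monoid (a braid or Garside-type monoid, or a monoid embedding in a group), cancellativity comes for free. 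For the braid case one can invoke that $B_n^+$ embeds in the group $B_n$ and is therefore cancellative, while Proposition~\ref{prop:braid_monoid_RG} already shows these presentations are R-complete; what remains is to check that the R-completion is genuinely infinite, which again follows from the explicit families computed there.

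The hard part will be decoupling infiniteness of the R-completion from non-cancellativity. In all the type~2 examples the infinite R-completion arose precisely from a relation of the form $bw=b$ that kills cancellation, so the conceptual obstacle is to produce infinitude by a mechanism compatible with cancellativity. The resolution is to use a relation $s\ldots=s\ldots$ that is \emph{balanced}, meaning both sides have the same length and the monoid embeds in a group; then the non-determinism of reversing still yields infinitely many completeness obstructions (the reversings of $s^{-1}s$ branch without ever closing up to $\ew$ through a single finite relation set), while cancellativity is guaranteed externally by the group embedding. Concretely I would take a braid monoid $B_n^+$ with $n\geq 3$, or the presentation $(a,b;\, bab=ba^2)$ of Fact~\ref{prop:G-completion_and_R-completion_agree} whose R-completion $\{ba^{2n-1}b=ba^{2n}\}$ is infinite; I then only need to verify that the associated monoid is cancellative, which I would establish by exhibiting a faithful action or an embedding into a group, completing the proof.

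Finally, I would assemble these pieces: state the chosen presentation, invoke homogeneity to license the reversing criterion, cite or reproduce the computation giving the infinite family of added relations, and close by proving two-sided cancellativity via the embedding. The only calculation requiring care is verifying that the infinite family is genuinely added by Algorithm~\ref{algo:word_rev_completion} and never saturates---i.e.\ that at stage $n$ the relevant reversing of $s^{-1}s$ still fails the test of Proposition~\ref{prop:word_rev_completeness_criterion}---but this is exactly the inductive pattern already established in Section~\ref{s_sec:word-reversing_example}, so it can be transcribed with minimal modification.
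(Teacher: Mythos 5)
There is a genuine gap: both of your concrete candidates fail, and for opposite reasons. For the braid monoids you conflate the two completions: Proposition~\ref{prop:braid_monoid_RG} rests on the standard fact that the Artin \press\ of $B_n^+$ are R-\emph{complete}, so their R-completion is the finite \pres\ itself; the infinite families $ba^nba=aba^2b^{n-1}$ computed there belong to the G-completion, not the R-completion. Hence braid monoids give cancellativity but no infinite R-completion. Conversely, $(a,b;\,bab=ba^2)$ does have the infinite R-completion $\{ba^{2n-1}b=ba^{2n};\,n\geq1\}$ of Fact~\ref{prop:G-completion_and_R-completion_agree}, but its monoid is \emph{not} left cancellative: $b\cdot ab\eqv b\cdot a^2$ holds while $ab\eqv a^2$ fails, since neither $ab$ nor $a^2$ contains any word $ba^{2n-1}b$ or $ba^{2n}$ as a subword, so both are isolated. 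Equivalently, apply Proposition~\ref{prop:cancellativity_equivalent_su=sv...} to the R-complete \pres\ $(a,b;\,ba^{2n-1}b=ba^{2n},\,n\geq1)$: left cancellativity would require ${(a^{2n-1}b)}^{-1}a^{2n}\rev\ew$, which does not hold. This also shows that your proposed mechanism---a ``balanced'' relation $s\ldots=s\ldots$ together with a group embedding---is self-contradictory: an embedding into a group forces cancellativity, hence $u\eqv v$ for every relation $su=sv$, which is exactly what fails here. In the presence of R-completeness, \emph{any} relation $su=sv$ with $u\not\eqv v$ kills left cancellation, so the infinitude of the R-completion must be produced without any relation of type $s\ldots=s\ldots$.

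The paper's proof does precisely this: it takes $(a,b,c,d;\,ab=bac,\,bc=cbd,\,da=ad,\,bd=db,\,dc=cd)$, in which no relation has both sides beginning with the same letter. The infinite R-completion $\{b{(ac)}^nc=da^ncb;\,n\geq0\}$ arises from the interaction of the two non-length-preserving relations $ab=bac$ and $bc=cbd$---note that plain word length is not invariant here, so homogeneity must be certified by the ad hoc pseudolength $\lambda(u)=|u|+2\,\sharp_{a<b}(u)+\sharp_{b<a}(u)$, contrary to your assumption that length-invariance would suffice. Left cancellativity is then immediate and vacuous from Proposition~\ref{prop:cancellativity_equivalent_su=sv...}, because neither the original nor the added relations are of type $s\ldots=s\ldots$; right cancellativity is obtained via the symmetric notion of left \redr. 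Your outline contains no candidate with these features, so the central difficulty---decoupling an infinite R-completion from non-cancellativity---remains unresolved in your proposal.
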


\begin{proof}
  Consider the \pres\
  \begin{equation}
    \label{eq:infinte_R_completion_cancellative}
    (a,b,c,d; \, ab=bac, bc=cbd, da=ad, bd=db, dc=cd).
  \end{equation}
Denote by $\sharp_{a<b}(u)$ (resp. $\sharp_{b<a}(u)$)  the number of pairs $(i,j)$ with $i<j$ such that the $i$th letter of $u$ is an $a$ (resp. $b$) and the $j$th letter is a $b$ (resp. $a$). We define $\lambda$ on the set of words as follows:
\[
\lambda(u) = |u| + 2 \sharp_{a<b}(u) + \sharp_{b<a}(u).
\]
One checks on the relations of (\ref{eq:infinte_R_completion_cancellative}) that $\lambda$ is a pseudolength. Applying Algorithm~\ref{algo:word_rev_completion}, we find that the R-completion for the \pres\ of~(\ref{eq:infinte_R_completion_cancellative}) is the set
\[
\{ab=bac, bc=cbd, da=ad, bd=db, dc=cd\} \cup \{ b{(ac)}^nc = da^ncb ; n\geq 0\}.
\]
Finally, Proposition~\ref{prop:cancellativity_equivalent_su=sv...} and the fact
that there is no relation $s\ldots = s\ldots$
in~\eqref{eq:infinte_R_completion_cancellative} imply that the
associated monoid is left cancellative.

As for right cancellativity, we appeal to left 
reversing~\cite{dehornoy03:_compl}, a notion symmetric to that of (right)
reversing. As the (right) R-completeness involved in
Proposition~\ref{prop:cancellativity_equivalent_su=sv...} leads to left
cancellativity, left R-completeness leads to right cancellativity. So it suffices to
prove that the \pres~\eqref{eq:infinte_R_completion_cancellative} is left
R-complete; this is similar to proving its right R-completeness and hence we
omit it.
\end{proof}

%\section{Main Result}

%%We recall the works reported in \cite{Key1}.
%%......
%%We recall that \citet{Key1} proved the result.

%\begin{defi}[Nice Notion]
%We say a polynomial is \emph{nice} if ....  \qed
%\end{defi}

%We state the main result.

%\begin{thm}[Someone 2005]
%All nice polynomials are also pretty....  \qed
%\end{thm}

%\begin{pf}
%The proof is easy... 
%\end{pf}

\bibliography{biblio.bib}

\begin{thebibliography}{1}

\bibitem{bokut03:_groeb_shirs}
L.A. Bokut, Y.~Fong, W.-F. Ke, and L.-S. Shiao.
\newblock {G}r\"obner-{S}hirshov bases for the braid semigroup.
\newblock In {\em Advances in algebra}, pages 60--72. World Sci. Publ., 2003.

\bibitem{bokut03:_shirs}
L.A. Bokut, Y.~Fong, and L.S. Shiao.
\newblock {G}r\"obner-{S}hirshov bases for algebras, groups, and semigroups.
\newblock In {\em Proceedings of the Third International Algebra Conference
  (Tainan, 2002)}, pages 17--32. Kluwer Acad. Publ., 2003.

\bibitem{dehornoy92:_deux}
P.~Dehornoy.
\newblock Deux propri\'et\'es des groupes de tresses.
\newblock {\em C.R. Acad. Sci. Paris}, 315:633--638, 1992.

\bibitem{dehornoy02:_group_garsid}
P.~Dehornoy.
\newblock Groupes de {G}arside.
\newblock {\em Ann. Sci. \'Ecole Norm. Sup. Paris}, 35:267--306, 2002.

\bibitem{dehornoy03:_compl}
P.~Dehornoy.
\newblock Complete positive group presentations.
\newblock {\em J. Algebra}, 268:156--197, 2003.

\bibitem{dehornoy99:_gauss_garsid_artin}
P.~Dehornoy and L.~Paris.
\newblock Gaussian groups and {G}arside groups, two generalizations of {A}rtin
  groups.
\newblock {\em Proc. London Math. Soc.}, 79(3):569--604, 1999.

\bibitem{iii71:_group_theor_decis_their_class}
C.F. {Miller III}.
\newblock {\em On Group Theoretic Decision and Their Classification}.
\newblock Ann. of Math. Stud. Princeton Univ. Press, 1971.

\bibitem{mora86:_groeb}
F.~Mora.
\newblock {G}r\"obner bases for non-commutative polynomial rings.
\newblock In Springer, editor, {\em Proc. AAECC 3}, volume 229 of {\em Lect.
  Notes Comput. Sci.}, pages 353--362, 1986.
\newblock Zbl 0659.16003.

\bibitem{kostrikin95:_combin_asymp_method_algeb}
V.A. Ufnarovskij.
\newblock {\em Combinatorial and Asymptotic Methods in Algebra}, volume~57 of
  {\em Encyclopaedia of Mathematical Sciences}, chapter~I, pages 1--196.
\newblock Springer, 1995.
\newblock Zbl 0826.16001.

\end{thebibliography}
\bibliographystyle{plain.bst} 

\end{document}